\documentclass[11pt]{article}
\usepackage[utf8]{inputenc}
\usepackage[T1]{fontenc}
\usepackage[english]{babel}
\usepackage[margin=2.8cm]{geometry}
\usepackage{mathpazo}
\usepackage{courier}
\usepackage{amsmath,amsfonts,amssymb,amsthm}
\usepackage{graphicx}
\usepackage{float}
\usepackage[numbers]{natbib}
\usepackage{color}

\newtheorem{rmq}{Remark}[section]  
\newtheorem{theo}{Theorem}[section]  
\newtheorem{lem}{Lemma}[section]



\DeclareUnicodeCharacter{B0}{\textdegree}

\usepackage[draft]{fixme}
\fxusetheme{color}
\FXRegisterAuthor{h}{ah}{H}
\FXRegisterAuthor{p}{ap}{P}
\FXRegisterAuthor{a}{aa}{A}

\usepackage{hyperref}
\hypersetup{
pdfpagemode=UseOutlines,      
pdfstartview=Fit,             
pdffitwindow=true,            
pdfpagelayout=TwoColumnsRight,
pdftoolbar=true,              
pdfmenubar=true,              
bookmarksopen=false,          
bookmarksnumbered=true,       
colorlinks=true,              
pdfauthor={Ton nom},     
pdftitle={Titre PDF},    
pdfcreator=PDFLaTeX,          %
pdfproducer=PDFLaTeX,         %
linkcolor=blue,               
urlcolor=blue,                
anchorcolor=blue,            
citecolor=blue,              
frenchlinks=true,             
pdfborder={0 0 0}             
}

\usepackage[usenames,dvipsnames]{xcolor}
\usepackage[colorinlistoftodos,textwidth=2.3cm]{todonotes}

\usepackage[normalem]{ulem}
\usepackage{enumitem}

\definecolor{Vert}{RGB}{0,128,0}

\begin{document}

\title{On the asymptotic rate of convergence of Stochastic Newton algorithms and their Weighted Averaged versions}
\author{Claire Boyer$^{(1,2)}$ and Antoine Godichon-Baggioni$^{(1)\star}$, \\
        $^{(1)}$ Laboratoire de Probabilités, Statistique et Modélisation\\
       Sorbonne-Université,       75005 Paris, France \\    
       $^{(2)}$ INRIA Paris \\
        \{claire.boyer or antoine.godichon$\_$baggioni\}@sorbonne-universite.fr}
\maketitle

\begin{abstract}
Most machine learning methods can be regarded as the minimization of an unavailable risk function. To optimize the latter, with samples provided in a streaming fashion, we define general  (weighted averaged) stochastic Newton algorithms, for which a theoretical analysis of their asymptotic efficiency is conducted. 
The corresponding implementations are shown not to require the inversion of a Hessian estimate at each iteration under a quite flexible framework that covers the case of linear, logistic or softmax regressions to name a few. 
Numerical experiments on simulated and real data give the empirical evidence of the pertinence of the proposed methods, which outperform popular competitors particularly in case of bad initializations. 
\end{abstract}

\textbf{Keywords:}
Stochastic optimization, Newton algorithm, Averaged stochastic algorithm, Online learning



\section{Introduction}
Machine learning challenges are now encountered in many different scientific applications facing streaming or large amounts of data. 
First-order online algorithms have become hegemonic: by a low computational cost per iteration, they allow performing machine learning tasks on large datasets, processing each observation only once; see, for instance, the review paper \cite{bottou2018optimization}. 
Stochastic gradient methods (SGD) and their averaged versions are shown to be theoretically asymptotically efficient \cite{PolyakJud92,Pel00,GB2017}, while recent work focuses on the non-asymptotic behavior of these estimates \cite{bach2013non,godichon2016,gadat2017optimal}: more precisely, it was proven that, under mild assumptions, averaged estimates can converge at a rate of order $O(1/n)$ where we let $n$ denote the size of the dataset (and the number of iterations as well, in a streaming setting).
However, these first-order online algorithms can be shown in practice to be very sensitive to the Hessian structure of the risk they are supposed to minimize.  For example, when the spectrum of local Hessian matrices shows large variations among their eigenvalues, the stochastic gradient algorithm may be stuck far from the optimum, see for instance the application of \cite[Section 5.2]{BGBP2019}.

To address this issue, (quasi) online second-order optimization has been also considered in the literature.  
In view of avoiding highly costly iterations, most online (quasi) second-order algorithms rely on approximating the Hessian matrix by exclusively using gradient information or by assuming a diagonal structure of it (making its inversion much easier). These methods result in choosing a different step size with respect to the components of the current gradient estimate, hence the name of adaptive stochastic gradient algorithms, such as the Adagrad \cite{duchi2011adaptive} or Adadelta \cite{zeiler2012adadelta} methods. 

In this paper, we aim at minimizing the general convex function $G$ defined for any {parameter} $h\in \mathbb{R}^d$ as
$$
G(h) := \mathbb{E}\left[ g \left( X , h \right) \right],
$$
where $X$ denotes {a random variable taking value in $\mathcal{X}$ and standing for a data sample}, and $g : \mathcal{X}\times \mathbb{R}^{d} \longrightarrow \mathbb{R}$ is a loss function. The function $G$ may encode the risk of many supervised or unsupervised machine learning procedures, encompassing, for instance, linear, logistic or even softmax regressions.
Having only access to the data points $X_1,\hdots ,X_n$, i.i.d.\ copies of $X$, instead of the true underlying distribution of $X$, we propose new stochastic Newton methods to perform the optimization of $G$, relying on estimates of both the Hessian and its inverse, using second-order information of $g$. 
The reader should have noted that stochastic first-order averaged algorithms are known to be asymptotically efficient \cite{Pel00}.  Therefore, second-order methods are not expected to improve this type of theoretical guarantees but instead perform better in practice.

\subsection{Related works} 
 A Stochastic Quasi-Newton method was introduced in \cite{byrd2016stochastic}, relying on li\-mi\-ted-memory BFGS updates. Specifically, local curvature is captured through (subsampled) Hessian-vector products, instead of differences of gradients. The authors provide a stochastic Quasi-Newton algorithm which cost is close to the one of standard SGDs.
The convergence study in \cite{byrd2016stochastic} requires the boundedness from above and from below of the spectrum of the estimated Hessian inverses, uniformly over the space of parameters, which can be very restrictive. 
Besides, the framework considered in \cite{byrd2016stochastic} departs from the setting of the present paper: 
the stochastic BFGS algorithm can be seen as a refinement of mini-batches gradient algorithms, which is not explicitly derived for online purposes. 


 
In \cite{castera2019inertial}, a hybrid algorithm is proposed that combines gradient descent and Newton-like behaviors, as well as inertia.
Under the Kurdyka-Lojasiewicz (KL) property, a theoretical analysis of the associated continuous dynamical model is conducted, which significantly departs from the type of convergence guarantees established in this paper. 

A truncated Stochastic Newton algorithm has been specifically introduced for and dedicated to logistic regression in \cite{BGBP2019}.  The recursive estimates of the inverse of the Hessian are updated through the Ricatti's formula (also called the Sherman-Morri\-son's formula) leading to only $O(d^{2})$ operations at each iteration. Only in the particular case of logistic regression, optimal asymptotic behaviour of the algorithm is established under assumptions close to the ones allowed by the general framework considered in the present paper. 
Furthermore, the considered step sequence of the order of $1/n$ in \cite{BGBP2019} can freeze the dynamics of the estimates in practice, leading to poor results in the case of far initialization. 

 In \cite{LP2020}, the authors introduced
a conditioned SGD based on a preconditioning of the gradient direction. The preconditioning matrix is typically an estimate of the inverse Hessian at the optimal point, for which they obtain asymptotic optimality of the procedure under the $L$ -smoothness assumption of the objective function and boundedness assumption of the spectrum of all the preconditioning matrices used over the iterations. 
They propose to use preconditioning matrices in practice as inverse of weighted recursive estimates of the Hessian.
Therefore, the proposed conditioned SGD entails a full inversion of the estimated Hessian, requiring $O(d^3)$ operations per iteration in general, which is less compatible with large-scale data. 
Note that the weighting procedure only concerns the estimation of the Hessian, and not the whole algorithm, as we will suggest in this paper. 
The choice of the sequence of steps in the conditioned SGD remains problematic: choosing steps of order $1/n$, which is theoretically sound to obtain the optimal asymptotic behavior, may result in saturation of the algorithm far from the optimum, particularly in the case of bad initializations.

 In order to reduce the sensitivity to the initialization, an averaged Sto\-chas\-tic Gauss-Newton algorithm has been proposed in the restricted setting of non-linear regression in \cite{CGBP2020}. Despite the peculiar case of nonlinear regression, stochastic algorithms are shown to benefit from averaging in simulations.

\subsection{Contributions} In this paper, we consider a unified and general framework that includes various applications of machine learning tasks,  for which
we propose a stochastic Newton algorithm: an estimate of the Hessian is constructed and \emph{easily} updated over iterations using genuine second-order information. {Given a particular structure of the Hessian estimates that will be encountered in various applications}, this algorithm leverages from the possibility to directly update the inverse of the Hessian matrix at each iteration in $O(d^{2})$ operations, with $d$ the ambient dimension, generalizing a trick introduced in the context of logistic regression in \cite{BGBP2019}. 
For simplicity, a first version of this algorithm is studied choosing the step size of the order $O(1/n)$ where we let $n$ denote the number of iterations.  
Under suitable and standard assumptions, we establish the following asymptotic results: (i) almost sure convergence, and (ii)  almost sure rates of convergence of the iterates to the optimum, as well as (iii) a central limit theorem for the iterates.  
Nevertheless, as mentioned before, considering step sequences of order $1/n$ can lead to poor results in practice \cite{CGBP2020}. In order to alleviate this problem, we thus introduce a weighted averaged stochastic Newton  algorithm (WASNA)  which can benefit  from better step size choices and above all from weighted averaging over the iterates. We then establish the almost sure rates of convergence of its estimates, preserving the optimal asymptotic behavior of the whole procedure.
This work allows for a unified framework that encompasses the case of linear, logistic, and softmax regressions, for which WASNAs are derived, and comes with their convergence guarantees. 
To put in a nutshell, we propose new and very general weighted stochastic Newton algorithms, (i) that can be implemented efficiently in regard to their second-order characteristics, (ii) that allow various choices of weighting discussed in the light of the induced convergence optimality, and 
(iii) for which theoretical locks have been lifted
resulting in strong convergence guarantees without requiring a global strong convexity assumption.

The relevance of the proposed algorithms is illustrated in numerical experiments, challenging favorite competitors such as SGD with adaptive learning rate. 
{Even without a refined tuning of the hyperparameters, the method is shown to give good performances on the real MNIST\footnote{\url{http://yann.lecun.com/exdb/mnist/}} dataset in the context of multi-label classification.
For reproducibility purposes, the code of all the numerical experiments is available at \url{godichon.perso.math.cnrs.fr/research.html}.}


\subsection{Organization of the paper}
The general framework is presented in Section \ref{sec::framework} which introduces all the notation. The set of mild assumptions on $G$ and $g$ is also discussed.
Section \ref{sec:SN_algo} presents a new general stochastic Newton algorithm and the associated theoretical guarantees. 
Section \ref{sec:ASN_algo} introduces a new weighted averaged stochastic Newton algorithm, followed by its theoretical study in which optimal asymptotic convergence is obtained.
The versatility and relevance of the proposed algorithms is illustrated in Section \ref{sec::applications} in the case of linear, logistic, and softmax regressions { with the help of numerical implementation on simulated and real data. The theoretical guarantees for all regression settings and their associated proofs are made explicit in the appendices.}

\subsection{Notation} 
In the following, we will denote by
$\| \cdot\|$ the Euclidean norm in dimension $d$, and by
$\| \cdot\|_{op}$ the operator norm corresponding to the largest singular value in finite dimension. 
The Euclidean ball centered at $c$ and of radius $r$ will be noted as
$\mathcal{B}\left( c , r\right)$.

\section{Framework}\label{sec::framework}

Let $X$ be a random variable taking values in a space $\mathcal{X}$. The aim of this work is to estimate the minimizer of the convex function $G : \mathbb{R}^{d} \longrightarrow \mathbb{R}$ defined for all $h \in \mathbb{R}^{d}$ by
\[
G(h) := \mathbb{E}\left[ g \left( X , h \right) \right]
\]
for a loss function $g : \mathcal{X}\times \mathbb{R}^{d} \longrightarrow \mathbb{R}$. 

In this paper, under the differentiability of $G$, we assume that the first order derivatives also meet the following assumptions:

\begin{enumerate}[label=\textbf{(A\arabic*)}]
\setcounter{enumi}{0}
\item \label{(A1)} For almost every $x\in \mathcal{X}$, the functional $g(x,.)$ is differentiable  and
\begin{enumerate}[label=\textbf{(A1\alph*)}]
\item \label{(A1a)} there is $\theta \in \mathbb{R}^{d}$ such that $\nabla G( \theta) =0$;
\item \label{(A1b)} there are non-negative constants $C$ and $C'$ such that for all $h \in \mathbb{R}^{d}$,
\[
\mathbb{E}\left[ \left\| \nabla_{h} g \left( X , h \right) \right\|^{2} \right] \leq C  + C' \left( G(h) - G(\theta) \right);   
\]
\item \label{(A1c)} the functional
\[
\Sigma : h \longmapsto \mathbb{E}\left[ \nabla_{h} g \left( X,h \right) \nabla_{h} g \left( X,h \right)^{T} \right]
\]
is continuous at $\theta$.
\end{enumerate}
\end{enumerate}

Furthermore, second order information will be crucial for the definition of the Newton algorithms to come, for which we require the following assumptions to hold: 
\begin{enumerate}[label=\textbf{(A\arabic*)}]
\setcounter{enumi}{1}
\item \label{(A2)} The functional $G$ is convex, twice continuously differentiable and
\begin{enumerate}[label=\textbf{(A2\alph*)}]
\item \label{(A2a)} the Hessian of $G$ is bounded, i.e.\ there is a positive constant $L_{\nabla G}>0$ such that for all $h \in \mathbb{R}^{d}$,
\[
\left\| \nabla^{2}G (h) \right\|_{op} \leq L_{\nabla G};
\]

\item \label{(A2b)} the Hessian of $G$ is positive at $\theta$ and we denote by $\lambda_{\min}$ its smallest eigenvalue.
\item \label{(A2c)}  the Hessian of $G$ is Lipshitz on a neighborhood of $\theta$: there are positive constants $A_{\nabla^{2} G} >0$ and $L_{A,\nabla^{2} G}>0$ such that for all $h \in \mathcal{B}\left( \theta , A_{\nabla^{2} G} \right)$,
\[
\left\| \nabla^{2}G(h) - \nabla^{2}G(\theta) \right\|_{op} \leq L_{A,\nabla^{2}G} \left\| \theta - h \right\| .
\]
\end{enumerate}
\end{enumerate}

Remark that Assumption \ref{(A2a)} leads the gradient of $G$ to be Lipschitz continuous, and in particular at the optimum $\theta$, for any $h \in \mathbb{R}^{d}$, one has
\[
\left\| \nabla G  ( h) \right\| \leq L_{\nabla G} \left\| h- \theta \right\| .
\]

Overall, note that all these assumptions are very close to the ones given in \cite{pelletier1998almost}, \cite{Pel00}, \cite{GB2017} or \cite{godichon2016}. One of the main differences concerns Assumption \ref{(A1b)} in which the second order moments of $\nabla_{h}g (X,.)$ are not assumed to be upper-bounded by the squared errors $\| \cdot - \theta\|^{2}$, but by the risk error instead, i.e.\ by $G(\cdot) - G(\theta)$. Note that the first condition may entail the second one, when considering the functional $G$ to be $\mu$-strongly convex, since for any $h\in \mathbb{R}^d$,  $\| h - \theta \|^{2} \leq \frac{2}{\mu}  (G(h) - G(\theta))$. In this respect, Assumption \ref{(A1b)} can be seen as {nearly equivalent to} counterparts encountered in the literature.

\section{The stochastic Newton algorithm}
\label{sec:SN_algo}
In order to lighten the notation, let us denote the Hessian of $G$ at the optimum $\theta$ by $H = \nabla^{2}G(\theta)$. As already mentioned in \cite{BGBP2019}, the usual stochastic gradient algorithms and their averaged versions are shown to be theoretically efficient but can be very sensitive to the case where $H$ has eigenvalues at different scales. To alleviate this problem, we first focus on the stochastic Newton algorithm (SNA).

\subsection{Implementation of the SNA}

\subsubsection{The algorithm}
Consider $X_{1} , \ldots ,X_{n} , \ldots$ i.i.d.\ copies of $X$. The Stochastic Newton Algorithm (SNA) can be iteratively defined as follows: for all $n \geq 0$,
\begin{align}
\label{algo:sto_newton_iterate}
\theta_{n+1} & = \theta_{n} - \frac{1}{n+1+c_{\theta}}\overline{H}_{n}^{-1} \nabla_{h} g \left( X_{n+1} , \theta_{n} \right) ,
\end{align}
given a finite initial point $\theta_{0}$, for $c_{\theta} \geq 0$ and with $\overline{H}_{n}^{-1}$ a recursive estimate of $H^{-1}$, chosen symmetric and positive at each iteration. 
Remark that the constant $c_{\theta}$ {plays a predominant role in the first iterations (for small $n$) since it balances the impact of the first (and probably bad) estimates out. This effect decreases as the number $n$ of iterations grows and, hopefully, as the iterates improve}. 
{On a more technical side}, let us suppose that one can construct a filtration $\left( \mathcal{F}_{n} \right)$ verifying that for any $n\geq 0$,
(i) $\overline{H}_{n}^{-1}$ and $\theta_{n}$ are $\mathcal{F}_{n}$-measurable, and (ii)
 $X_{n+1}$ is independent from $\mathcal{F}_{n}$.
Note that if the estimate $\overline{H}_{n}^{-1}$ only depends on $X_{1} , \ldots ,X_{n}$, one can thus consider the filtration $\mathcal{F}_{n}$ generated by the current sample, i.e.\ for all $n \geq 1$, $\mathcal{F}_{n} = \sigma \left( X_{1} , \ldots ,X_{n} \right)$. 

\subsubsection{Construction of $\overline{H}_{n}^{-1}$}
When a natural online estimate of the Hessian $\overline{H}_{n} = (n+1)^{-1}H_{n}$ of the form
\begin{equation}
\label{form1esthessian}
H_{n} = H_{0} + \sum_{k=1}^{n} u_{k}\left( X_{k} , \theta_{k-1} \right) \Phi_{k} \left( X_{k} , \theta_{k-1} \right) \Phi_{k} \left( X_{k} , \theta_{k-1} \right)^{T} ,
\end{equation}
with $H_{0}$ symmetric and positive,
is available, a computationally-cheap estimate of its inverse can be constructed. 
Indeed, the inverse $H_{n+1}^{-1}$ can be easily updated thanks to Riccati's formula \cite{Duf97}, i.e.\ 
\begin{align}
\label{eq:riccatti_trick}
H_{n+1}^{-1} = H_{n}^{-1} - u_{n+1} \left( 1+ u_{n+1} \Phi_{n+1}^{T}H_{n}^{-1}\Phi_{n+1} \right)^{-1} H_{n}^{-1} \Phi_{n+1} \Phi_{n+1}^{T}H_{n}^{-1}
\end{align}
with $\Phi_{n+1} =\Phi_{n+1} \left( X_{n+1} , \theta_{n} \right)$ and $u_{n+1} = u_{n+1} \left( X_{n+1} , \theta_{n} \right) $. In such a case, one can consider the filtration generated by the sample again, i.e.\ $\mathcal{F}_{n} = \sigma \left( X_{1} , \ldots ,X_{n} \right)$.
In {Appendix \ref{sec::app::theoretical}}, the construction of the recursive estimates of the inverse of the Hessian will be made explicit in the cases of linear, logistic and softmax regressions.

\subsection{Convergence results for the SNA}
\label{sec:SN_convergence_results}

In this part, we derive theoretical guarantees about consistency, rates of convergence and asymptotic efficiency of the estimates $(\theta_n)_n$ defined in \eqref{algo:sto_newton_iterate}. These kinds of results can be inter-weaved as follows: 
first, one should obtain the consistency of the estimates $(\theta_n)_n$, which, coupled with the convergence of $(\overline{H}_n)_n$ (see Assumption \ref{(H2a)} coming), entails a.s.\ rates of convergence for $(\theta_n)_n$; the latter can be then combined with convergence rates for $\left(\overline{H}_n\right)_n$ (see Assumption \ref{(H2b)} coming), to establish a central limit theorem for $(\theta_n)_n$.  In this part, we derive theoretical guarantees about consistency, rates of convergence and asymptotic efficiency of the estimates $(\theta_n)_n$ defined in \eqref{algo:sto_newton_iterate}. These kinds of results can be inter-weaved as follows: 
first, one should obtain the consistency of the estimates $(\theta_n)_n$, which, coupled with the convergence of $(\overline{H}_n)_n$ (see Assumption \ref{(H2a)} coming), entails a.s.\ rates of convergence for $(\theta_n)_n$; the latter can be then combined with convergence rates for $\left(\overline{H}_n\right)_n$ (see Assumption \ref{(H2b)} coming), to establish a central limit theorem for $(\theta_n)_n$. {Remark that the consistency and the rate of convergence of the estimates of the Hessian are usually  deduce from the ones of the estimates of $\theta$.}

A usual key ingredient to establish the almost sure convergence of the estimates constructed by  stochastic algorithms is the Robbins-Siegmund theorem \cite{Duf97}. 
To do so for the stochastic Newton algorithm proposed in \eqref{algo:sto_newton_iterate}, we need to prevent the possible divergence of the eigenvalues of $\overline{H}_{n}^{-1}$. Furthermore, to ensure convergence to the true parameter solution $\theta$, 
the smallest eigenvalue of $\overline{H}_{n}^{-1}$ should be bounded from below, that is, $\lambda_{\max} \left( \overline{H}_{n} \right)$ should be bounded from above.
To this end, we require the following assumption to be satisfied:
\begin{enumerate}[label=\textbf{(H\arabic*)}]
\setcounter{enumi}{0}
\item \label{(H1)} The largest eigenvalue of the Hessian estimates $\overline{H}_{n}$ and its inverse $\overline{H}_{n}^{-1}$ are controlled: there is $\beta \in (0,1/2)$ such that
\[
\lambda_{\max} \left( \overline{H}_{n} \right) = O (1) \quad a.s \quad \quad \text{and} \quad \quad \lambda_{\max} \left( \overline{H}_{n}^{-1} \right) = O \left( n^{\beta} \right) \quad a.s.
\]
\end{enumerate}
If verifying Assumption \ref{(H1)} in practice may seem difficult, we will see in  {Appendix \ref{sec::app::theoretical} (Equations \eqref{alg:WASNA_log}, \eqref{alg:SNA_soft} or \eqref{alg:WASNA_soft})} how to modify the recursive estimates of the inverse the Hessian to obtain such a control on their spectrum, while preserving a suitable filtration.
The following theorem gives the strong consistency of the stochastic Newton estimates constructed in \eqref{algo:sto_newton_iterate}.
\begin{theo}\label{snaas}
 Under Assumptions \ref{(A1a)}, \ref{(A1b)}, \ref{(A2a)} \ref{(A2b)} and \ref{(H1)}, the iterates of the stochastic Newton algorithm given in \eqref{algo:sto_newton_iterate} satisfy 
\[
\theta_{n} \xrightarrow[n\to + \infty]{a.s. } \theta .
\]
\end{theo}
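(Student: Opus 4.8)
The plan is to run a Robbins--Siegmund argument on the Lyapunov functional $V_n := G(\theta_n) - G(\theta) \geq 0$, rather than on the squared distance $\|\theta_n - \theta\|^2$; this choice is the crux of the whole proof. First I would invoke the descent inequality associated with the $L_{\nabla G}$-smoothness granted by Assumption \ref{(A2a)}: writing $\gamma_{n+1} = (n+1+c_\theta)^{-1}$ and $\xi_{n+1} = \nabla_h g(X_{n+1},\theta_n)$, one has
\[
G(\theta_{n+1}) \leq G(\theta_n) - \gamma_{n+1}\left\langle \nabla G(\theta_n), \overline{H}_n^{-1}\xi_{n+1}\right\rangle + \frac{L_{\nabla G}}{2}\gamma_{n+1}^2\left\|\overline{H}_n^{-1}\xi_{n+1}\right\|^2 .
\]

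Next I would take the conditional expectation with respect to $\mathcal{F}_n$. Since $\theta_n$ and $\overline{H}_n^{-1}$ are $\mathcal{F}_n$-measurable while $X_{n+1}$ is independent of $\mathcal{F}_n$, one has $\mathbb{E}[\xi_{n+1}\mid\mathcal{F}_n] = \nabla G(\theta_n)$, so the first-order term becomes the genuine quadratic form $\langle \nabla G(\theta_n), \overline{H}_n^{-1}\nabla G(\theta_n)\rangle \geq \lambda_{\max}(\overline{H}_n)^{-1}\|\nabla G(\theta_n)\|^2 \geq 0$: because the preconditioner is sandwiched between two copies of the same vector, its positive-definiteness makes the term nonnegative — exactly what fails for $\|\theta_n - \theta\|^2$, where the cross term $\langle \theta_n - \theta, \overline{H}_n^{-1}\nabla G(\theta_n)\rangle$ need not be nonnegative since the symmetric part of a product of two positive matrices can be indefinite. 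The second-order term is controlled through Assumption \ref{(A1b)}, giving $\mathbb{E}[\|\overline{H}_n^{-1}\xi_{n+1}\|^2\mid\mathcal{F}_n] \leq \lambda_{\max}(\overline{H}_n^{-1})^2(C + C' V_n)$. Combining yields a Robbins--Siegmund recursion
\[
\mathbb{E}[V_{n+1}\mid \mathcal{F}_n] \leq \left(1 + a_n\right) V_n - b_n + c_n,
\]
with $b_n = \gamma_{n+1}\lambda_{\max}(\overline{H}_n)^{-1}\|\nabla G(\theta_n)\|^2 \geq 0$ and $a_n, c_n$ both proportional to $\gamma_{n+1}^2\lambda_{\max}(\overline{H}_n^{-1})^2$. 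Assumption \ref{(H1)} is tailored so that $a_n$ and $c_n$ are almost surely summable: $\gamma_{n+1}^2 = O(n^{-2})$ and $\lambda_{\max}(\overline{H}_n^{-1})^2 = O(n^{2\beta})$ with $\beta < 1/2$, hence $\sum_n \gamma_{n+1}^2\lambda_{\max}(\overline{H}_n^{-1})^2 = O(\sum_n n^{2\beta - 2}) < \infty$ almost surely.

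The Robbins--Siegmund theorem then gives, almost surely, that $V_n$ converges to a finite limit $V_\infty \geq 0$ and that $\sum_n b_n < \infty$. Using the a.s.\ bound $\lambda_{\max}(\overline{H}_n) = O(1)$ from \ref{(H1)}, the latter reduces to $\sum_n \gamma_{n+1}\|\nabla G(\theta_n)\|^2 < \infty$; since $\sum_n \gamma_{n+1} = +\infty$, this forces $\liminf_{n}\|\nabla G(\theta_n)\| = 0$. To upgrade this to $\theta_n \to \theta$, I would first note that the convergent, hence bounded, sequence $V_n$ confines the iterates to a sublevel set $\{h : G(h) - G(\theta) \leq M\}$. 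This set is bounded: convexity of $G$ together with $\nabla G(\theta) = 0$ (Assumption \ref{(A1a)}) and $\nabla^2 G(\theta) \succ 0$ (Assumption \ref{(A2b)}) makes $t \mapsto G(\theta + td)$ eventually strictly increasing along every direction $d$, so $G$ is coercive and $\theta$ is its unique minimizer. Extracting a subsequence $\theta_{n_k} \to \theta^\star$ along which $\nabla G(\theta_{n_k}) \to 0$, continuity of $\nabla G$ gives $\nabla G(\theta^\star) = 0$, hence $\theta^\star = \theta$ and $V_{n_k} \to 0$; since $V_n$ converges, $V_\infty = 0$, i.e.\ $G(\theta_n) \to G(\theta)$, which coercivity and uniqueness promote to $\theta_n \to \theta$ almost surely.

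The main obstacle is the first modeling choice itself: the non-commutativity of $\overline{H}_n^{-1}$ with the displacement $\theta_n - \theta$ obstructs the classical squared-distance analysis, and everything hinges on replacing $\|\theta_n - \theta\|^2$ by the excess risk $V_n$ so that the descent direction pairs the preconditioner symmetrically with the gradient. The remaining delicate point is the passage from $\liminf\|\nabla G(\theta_n)\| = 0$ to full convergence, where global convexity and the local positive-definiteness \ref{(A2b)} must be combined to secure both a unique minimizer and bounded sublevel sets in the absence of any global strong convexity assumption.
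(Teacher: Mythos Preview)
Your proof is correct and follows essentially the same route as the paper: the Lyapunov functional $V_n = G(\theta_n) - G(\theta)$, the descent inequality from \ref{(A2a)}, the Robbins--Siegmund recursion with \ref{(A1b)} controlling the quadratic term and \ref{(H1)} ensuring summability, and the conclusion via $\liminf \|\nabla G(\theta_n)\| = 0$ together with convergence of $V_n$. Your final passage from $\liminf = 0$ to $\theta_n \to \theta$ is in fact more carefully argued than the paper's --- the paper simply invokes ``strict convexity'' of $G$ (which is not assumed globally) where you supply an explicit coercivity argument from convexity and \ref{(A2b)}.
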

The proof is given in Appendix \ref{app:proof_as} and consists in a particular case of the proof of the almost sure convergence for a more general algorithm than the SNA.

The convergence rate of the iterates for the SNA is a more delicate result to obtain, that requires the convergence of $\overline{H}_{n}$ and $\overline{H}_{n}^{-1}$. This is why we suppose  from now on that the following assumption is fulfilled:
\begin{enumerate}[label=\textbf{(H2\alph*)}]
\setcounter{enumi}{0}
\item \label{(H2a)} {$\overline{H}_{n}$ converges almost surely to $H$.}
\end{enumerate}
Assumption \ref{(H2a)} {is generally entailed by the almost sure convergence of $\theta_{n}$ (one can look for instance at the proof of Theorem \ref{theo:ASNsoft} in the setting of the softmax regression to see how such an assumption is verified)}.  Remark that Assumption \ref{(H2a)} also implies the convergence of $\overline{H}_{n}^{-1}$. 
The following theorem gives the rate of convergence associated to the SNA {iterates in}  \eqref{algo:sto_newton_iterate}.

\begin{theo}\label{theoratetheta}
Under Assumptions \ref{(A1)}, \ref{(A2)}, \ref{(H1)} and \ref{(H2a)}, the iterates of the stochastic Newton algorithm given in \eqref{algo:sto_newton_iterate} satisfy 
for all $\delta > 0$,
\[
\left\| \theta_{n} - \theta \right\|^2 = o \left( \frac{(\ln n)^{1+\delta}}{n} \right) \quad a.s.
\]
In addition, if there exist positive constants $a> 2$ and $C_{a}$ such that for all $h \in \mathbb{R}^{2}$,
\begin{equation}
\label{momenta}\mathbb{E}\left[ \left\| \nabla_{h} g (X,h) \right\|^{a} \right] \leq C_{a} \left( 1+ \left\| h- \theta \right\|^{a} \right),
\end{equation}
then
\[
\left\| \theta_{n} - \theta \right\|^{2} = O \left( \frac{\ln n}{n} \right) \quad a.s.
\]
\end{theo}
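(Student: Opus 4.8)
The plan is to study the squared error $\|\theta_n-\theta\|^2$ and to exploit that the Newton preconditioning renders the asymptotic drift isotropic, which is what produces the optimal exponent. Set $\beta_n:=\theta_n-\theta$, $\gamma_n:=(n+1+c_\theta)^{-1}$, and introduce the martingale increment $\xi_{n+1}:=\nabla_h g(X_{n+1},\theta_n)-\nabla G(\theta_n)$, which satisfies $\mathbb{E}[\xi_{n+1}\mid\mathcal{F}_n]=0$ by construction of the filtration. With this notation \eqref{algo:sto_newton_iterate} reads $\beta_{n+1}=\beta_n-\gamma_n\overline{H}_n^{-1}\bigl(\nabla G(\theta_n)+\xi_{n+1}\bigr)$. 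Almost sure convergence $\theta_n\to\theta$ is already granted by Theorem~\ref{snaas}, so the whole analysis may be localized to the random time after which $\theta_n\in\mathcal{B}(\theta,A_{\nabla^2G})$ and $\overline{H}_n^{-1}$ is close to $H^{-1}$.

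First I would take conditional expectations of $\|\beta_{n+1}\|^2$ given $\mathcal{F}_n$: the cross term involving $\xi_{n+1}$ vanishes, leaving a drift part $\|\beta_n-\gamma_n\overline{H}_n^{-1}\nabla G(\theta_n)\|^2$ and a noise part $\gamma_n^2\,\mathbb{E}[\|\overline{H}_n^{-1}\xi_{n+1}\|^2\mid\mathcal{F}_n]$. For the drift I would Taylor-expand the gradient at $\theta$, using $\nabla G(\theta)=0$ and the local Lipschitz bound \ref{(A2c)} to write $\nabla G(\theta_n)=H\beta_n+r_n$ with $\|r_n\|\le\tfrac12 L_{A,\nabla^2G}\|\beta_n\|^2$. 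The key is the inner product $\langle\beta_n,\overline{H}_n^{-1}\nabla G(\theta_n)\rangle$: since \ref{(H2a)} gives $\overline{H}_n^{-1}H\to I$ almost surely, it equals $(1+o(1))\|\beta_n\|^2$ up to a cubic remainder, which yields the decisive contraction factor $-2\gamma_n(1+o(1))$ — this is exactly what the Newton correction buys, in contrast with a plain stochastic gradient where the constant would be an eigenvalue of $H$. For the noise I would bound $\mathbb{E}[\|\overline{H}_n^{-1}\xi_{n+1}\|^2\mid\mathcal{F}_n]\le\|\overline{H}_n^{-1}\|_{op}^2\,\mathrm{Tr}\,\Sigma(\theta_n)$, control the trace through \ref{(A1b)} together with $G(\theta_n)-G(\theta)\le\tfrac12 L_{\nabla G}\|\beta_n\|^2\to0$ and the continuity \ref{(A1c)}, and use $\|\overline{H}_n^{-1}\|_{op}\to\|H^{-1}\|_{op}$ from \ref{(H2a)}. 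Collecting terms produces, eventually almost surely, a recursion of the form
\[
\mathbb{E}\bigl[\|\beta_{n+1}\|^2\,\big|\,\mathcal{F}_n\bigr]\le\Bigl(1-\tfrac{2}{n+1+c_\theta}(1+o(1))\Bigr)\|\beta_n\|^2+O\!\left(\frac{1}{n^2}\right),
\]
from which a Robbins–Siegmund argument already delivers a preliminary crude rate such as $\|\beta_n\|^2=o(n^{-1/2})$.

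To reach the sharp rates I would then use the closed-form representation of the linearized iteration, $\beta_n=\Pi_{n,0}\beta_0-\sum_{k=1}^n\gamma_{k-1}\Pi_{n,k}\overline{H}_{k-1}^{-1}\xi_k+R_n$, with $\Pi_{n,k}=\prod_{j=k}^{n-1}(I-\gamma_j\overline{H}_j^{-1}H)$ and $R_n$ collecting the Taylor remainders. Because $\overline{H}_j^{-1}H\to I$ one has $\Pi_{n,k}\sim(k/n)I$ in operator norm (finitely many transient factors being kept bounded thanks to $\|\overline{H}_j^{-1}\|_{op}=O(j^\beta)$ with $\beta<1/2$ from \ref{(H1)}), so the weights $\gamma_{k-1}\Pi_{n,k}$ are of order $1/n$ and the dominant stochastic term is $\frac1n N_n$ with $N_n=\sum_{k=1}^n\overline{H}_{k-1}^{-1}\xi_k$. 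This $N_n$ is a martingale whose predictable quadratic variation grows linearly, $\langle N\rangle_n\sim n\,H^{-1}\Sigma(\theta)H^{-1}$ by \ref{(A1c)} and \ref{(H2a)}; the crude preliminary rate makes $R_n$ negligible. The strong law of large numbers for martingales then gives $\|N_n\|^2=o\bigl(n(\ln n)^{1+\delta}\bigr)$ under the second-moment assumption \ref{(A1b)}, hence $\|\beta_n\|^2=o\bigl((\ln n)^{1+\delta}/n\bigr)$; under the higher-moment hypothesis \eqref{momenta} a sharper $L^a$ martingale estimate upgrades this to $\|N_n\|^2=O(n\ln n)$, whence $\|\beta_n\|^2=O(\ln n/n)$.

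The main obstacle I anticipate is making the exact contraction constant $2$ rigorous while showing that all perturbations are genuinely lower order. This rests entirely on $\overline{H}_n^{-1}H\to I$ rather than on a mere spectral bound for $\overline{H}_n^{-1}$, so the argument must simultaneously track three non-uniform error sources: the matrix defect $\overline{H}_n^{-1}H-I\to0$ at an unquantified almost sure speed, the cubic Taylor remainder $r_n$, and the transient growth $\|\overline{H}_n^{-1}\|_{op}=O(n^\beta)$ permitted by \ref{(H1)}. Since \ref{(H2a)} supplies only almost sure (not uniform) convergence, these have to be absorbed through the localization and through the slack in the martingale strong law, and it is precisely that slack which accounts for the logarithmic factors and for the gap between the general $o((\ln n)^{1+\delta}/n)$ rate and the moment-improved $O(\ln n/n)$ rate.
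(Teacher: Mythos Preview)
Your approach is in the right spirit and ends up with the same dominant martingale $N_n=\sum_{k}\overline{H}_{k-1}^{-1}\xi_k$ as the paper, together with the same strong law for martingales yielding the two rates. The difference lies in how the linearization is organized.

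The paper does \emph{not} work with the matrix products $\Pi_{n,k}=\prod_{j=k}^{n-1}(I-\gamma_j\overline{H}_j^{-1}H)$. Instead it writes
\[
\overline{H}_n^{-1}\nabla G(\theta_n)=(\theta_n-\theta)+H^{-1}\delta_n+(\overline{H}_n^{-1}-H^{-1})\nabla G(\theta_n),
\]
so that the linear part of the recursion has the \emph{scalar} coefficient $1-\tfrac{1}{n+1}$, whose product telescopes exactly to $\tfrac{1}{n+1}$. This yields directly $\theta_{n+1}-\theta=\tfrac{1}{n+1}M_{n+1}+\Delta_n$, with $M_{n+1}=N_{n+1}$ your martingale and $\Delta_n$ collecting the two remainders $H^{-1}\delta_k$ and $(\overline{H}_k^{-1}-H^{-1})\nabla G(\theta_k)$, both $o(\|\theta_k-\theta\|)$ almost surely. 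A one-line scalar recursion $\|\Delta_{n+1}\|\le(1-\tfrac{1}{n+1}+o(\tfrac{1}{n+1}))\|\Delta_n\|+\tfrac{1}{n+1}o(\|M_n\|/n)$ then gives $\|\Delta_n\|^2=O(\|M_n\|^2/n^2)$, and no preliminary Robbins--Siegmund rate is needed.

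Your route, by contrast, hinges on the assertion $\Pi_{n,k}\sim(k/n)I$. This is the weak point: under \ref{(H2a)} you only have $\overline{H}_j^{-1}H\to I$ with \emph{no} rate, and since $\sum_j\gamma_j\|\overline{H}_j^{-1}H-I\|$ may diverge, the product of $n-k$ near-identity non-commuting factors is not automatically close to the scalar product $\prod_j(1-\gamma_j)$. Making this rigorous would force you to peel off the defect $\overline{H}_j^{-1}H-I$ and treat it as a separate $o(\|\beta_j\|)$ remainder --- which is exactly the paper's decomposition, obtained more directly. So your argument is salvageable but not as written; the paper's scalarization trick is precisely what circumvents this difficulty and also makes your preliminary crude-rate step superfluous.
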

The proof of Theorem \ref{theoratetheta} is given in Appendix \ref{app:proof_rate_SN}.
This type of results are analogous to the usual ones dedicated to online estimation based on Robbins-Monro procedures \cite{PolyakJud92,Pel00,godichon2016}. 
Besides, one could note that the requirements on the functional $G$ to obtain rates of convergence for the SNA are very close to the ones requested to obtain rates of convergence for the averaged stochastic gradient descent \cite{Pel00}. 

Refining the theoretical analysis of SNA defined in \eqref{algo:sto_newton_iterate}, we now aim at studying the variance optimality of such a procedure.
To establish strong results as the asymptotic efficiency of the parameter estimates, the estimates of the Hessian should admit a (weak) rate of convergence. In this sense, we consider the following assumption:
\begin{enumerate}[label=\textbf{(H2\alph*)}]
\setcounter{enumi}{1}
\item \label{(H2b)} {There exists a positive constant $p_{H}$ such that}
\[
\left\| \overline{H}_{n} - H \right\|^{2} = O \left( \frac{1}{n^{p_{H}}} \right) \quad a.s.
\]
\end{enumerate}

 {Note that Assumption \ref{(H2b)} can be obtained when a rate of convergence for $\theta_{n}$ is available.}

 {All the ingredients are now there for establishing} the optimal asymptotic normality of the  {SNA} iterates \eqref{algo:sto_newton_iterate}.

\begin{theo}\label{thetatlc}
Under Assumptions \ref{(A1)},\ref{(A2)}, \ref{(H1)}, \ref{(H2a)} and  \ref{(H2b)},  the iterates of the stochastic Newton algorithm given in \eqref{algo:sto_newton_iterate} satisfy 
\[
\sqrt{n} \left( \theta_{n} - \theta \right) \xrightarrow[n\to + \infty]{\mathcal{L}} \mathcal{N}\left( 0 , H^{-1} \Sigma H^{-1} \right) ,
\]
with $\Sigma = \Sigma (\theta) :=  \mathbb{E}\left[ \nabla_{h} g (X , \theta ) \nabla_{h} g (X , \theta )^{T} \right] $.
\end{theo}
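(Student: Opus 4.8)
\section*{Proof proposal for Theorem \ref{thetatlc}}

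The plan is to linearize the recursion \eqref{algo:sto_newton_iterate} around $\theta$ and to exploit the fact that the $1/n$-type step combined with the Newton gain turns the dynamics into an exact weighted sum, to which a martingale central limit theorem applies. Writing $\Delta_n := \theta_n - \theta$ and introducing the increment $\xi_{n+1} := \nabla_h g(X_{n+1},\theta_n) - \nabla G(\theta_n)$, which satisfies $\E[\xi_{n+1}\mid\mathcal{F}_n]=0$ since $X_{n+1}$ is independent of $\mathcal{F}_n$ and $\theta_n$ is $\mathcal{F}_n$-measurable, a Taylor expansion of $\nabla G$ at $\theta$ together with Assumption \ref{(A2c)} and $\nabla G(\theta)=0$ yields
\[
\nabla_h g(X_{n+1},\theta_n) = H\Delta_n + \delta_n + \xi_{n+1}, \qquad \nrm{\delta_n} \leq \tfrac{1}{2}L_{A,\nabla^{2} G}\,\nrm{\Delta_n}^2,
\]
the bound holding eventually almost surely because $\theta_n\to\theta$ by Theorem \ref{snaas}. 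Substituting $\overline{H}_n^{-1}H = I + (\overline{H}_n^{-1}-H^{-1})H$ into \eqref{algo:sto_newton_iterate} recasts the recursion as
\[
\Delta_{n+1} = \Bigl(1 - \tfrac{1}{n+1+c_{\theta}}\Bigr)\Delta_n + \tfrac{1}{n+1+c_{\theta}}\,\rho_n - \tfrac{1}{n+1+c_{\theta}}\,\overline{H}_n^{-1}\xi_{n+1},
\]
with remainder $\rho_n := -(\overline{H}_n^{-1}-H^{-1})H\Delta_n - \overline{H}_n^{-1}\delta_n$.

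The second step is algebraic: setting $a_n := n + c_{\theta}$, one checks $1 - \tfrac{1}{n+1+c_{\theta}} = a_n/a_{n+1}$, so multiplying the recursion by $a_{n+1}$ telescopes it into $a_{n+1}\Delta_{n+1} = a_n\Delta_n + w_{n+1}$ with $w_{n+1} := \rho_n - \overline{H}_n^{-1}\xi_{n+1}$. Summing yields the exact identity
\[
\sqrt{n}\,\Delta_n = \frac{\sqrt{n}}{n+c_{\theta}}\Bigl(c_{\theta}\Delta_0 + \sum_{k=1}^n w_k\Bigr),
\]
so that, since $\tfrac{n}{n+c_{\theta}}\to 1$ and $\tfrac{\sqrt{n}}{n+c_{\theta}}\nrm{\Delta_0}\to 0$, Slutsky's lemma shows that $\sqrt{n}\,\Delta_n$ has the same limiting law as $\frac{1}{\sqrt{n}}\sum_{k=1}^n w_k$.

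The third step treats the two parts of $\frac{1}{\sqrt{n}}\sum_{k=1}^n w_k$ separately. For the remainder part $\frac{1}{\sqrt{n}}\sum_{k=1}^n\rho_{k-1}$, I would bound $\nrm{\rho_k}$ using Assumption \ref{(H2b)}, which gives $\nrm{\overline{H}_k^{-1}-H^{-1}} = O(k^{-p_H/2})$ (via $\overline{H}_k^{-1}-H^{-1}=\overline{H}_k^{-1}(H-\overline{H}_k)H^{-1}$), together with the almost sure rate $\nrm{\Delta_k} = O(\sqrt{\ln k/k})$ from Theorem \ref{theoratetheta}: the first contribution is $O(k^{-(1+p_H)/2}\sqrt{\ln k})$ and the second $O(\ln k/k)$, whence $\frac{1}{\sqrt{n}}\sum_{k=1}^n\nrm{\rho_{k-1}}\to 0$ almost surely for every $p_H>0$. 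The dominant term $-\frac{1}{\sqrt{n}}\sum_{k=1}^n\overline{H}_{k-1}^{-1}\xi_k$ is a martingale, since $\overline{H}_{k-1}^{-1}$ is $\mathcal{F}_{k-1}$-measurable and $\xi_k$ a martingale increment, so I would invoke the martingale central limit theorem. Its conditional covariance is $\frac{1}{n}\sum_{k=1}^n \overline{H}_{k-1}^{-1}\,\E[\xi_k\xi_k^T\mid\mathcal{F}_{k-1}]\,\overline{H}_{k-1}^{-1}$; using that $\E[\xi_k\xi_k^T\mid\mathcal{F}_{k-1}] = \Sigma(\theta_{k-1}) - \nabla G(\theta_{k-1})\nabla G(\theta_{k-1})^T$ converges to $\Sigma(\theta)=\Sigma$ by the continuity Assumption \ref{(A1c)} and $\nabla G(\theta)=0$, that $\overline{H}_{k-1}^{-1}\to H^{-1}$ by \ref{(H2a)}, and a Cesàro argument, this conditional covariance converges almost surely to $H^{-1}\Sigma H^{-1}$.

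The main obstacle is the verification of the Lindeberg (or a Lyapunov) condition for this martingale CLT: controlling $\frac{1}{n}\sum_{k=1}^n\E[\,\nrm{\overline{H}_{k-1}^{-1}\xi_k}^2\,\mathbf{1}_{\{\nrm{\overline{H}_{k-1}^{-1}\xi_k}>\varepsilon\sqrt{n}\}}\mid\mathcal{F}_{k-1}]\to 0$ from the merely quadratic control \ref{(A1b)} is delicate, and is most cleanly obtained through a uniform-integrability argument built on $\theta_{k-1}\to\theta$ and the continuity of $\Sigma$ at $\theta$ (a finite moment of order $a>2$ as in \eqref{momenta}, when available, makes this step immediate via Markov's inequality). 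Granting this condition, the martingale CLT gives $-\frac{1}{\sqrt{n}}\sum_{k=1}^n\overline{H}_{k-1}^{-1}\xi_k \xrightarrow{\mathcal{L}}\mathcal{N}(0,H^{-1}\Sigma H^{-1})$, and combining with the negligibility of the remainder part through Slutsky's lemma yields $\sqrt{n}(\theta_n-\theta)\xrightarrow{\mathcal{L}}\mathcal{N}(0,H^{-1}\Sigma H^{-1})$.
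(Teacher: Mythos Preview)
Your proof follows essentially the same route as the paper's: the paper's decomposition \eqref{dectheta} is exactly your telescoped identity $\Delta_n=\frac{1}{a_n}\bigl(a_0\Delta_0+\sum_{k=1}^n w_k\bigr)$ (the paper takes $c_\theta=0$), the martingale CLT is applied to the same dominant term $\frac{1}{\sqrt{n}}\sum\overline{H}_{k-1}^{-1}\xi_k$ with the same Ces\`aro argument for the conditional covariance via \ref{(A1c)} and \ref{(H2a)}, and the remainders are controlled in the same way through Theorem~\ref{theoratetheta} and Assumption~\ref{(H2b)}. Two minor points: the rate $\|\Delta_k\|=O(\sqrt{\ln k/k})$ you invoke is the second conclusion of Theorem~\ref{theoratetheta}, which requires the extra moment condition~\eqref{momenta}; the first conclusion $\|\Delta_k\|^2=o((\ln k)^{1+\delta}/k)$ is what the paper actually uses and already suffices for your remainder bounds. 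As for the Lindeberg condition you rightly flag as the delicate step, the paper handles it precisely by the route you mention in parentheses, namely by appealing to~\eqref{momenta}.
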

The proof is given in Appendix \ref{app:proof_tlc_SN}. 
In Theorem \ref{thetatlc}, the SNA estimates \eqref{algo:sto_newton_iterate} are ensured to be asymptotically efficient provided usual assumptions on the functional $G$ matching the ones made in \cite{PolyakJud92,Pel00,GB2017}.
This result highlights the benefit of stochastic Newton algorithms over standard online gradient methods, which have been shown not to be asymptotically optimal \cite{pelletier1998almost}, unless considering an averaged version \cite{Pel00}.

Note that this result has been achieved independently of the work of \cite{LP2020}.

The central limit theorem established in Theorem \ref{thetatlc} requires convergence rates of the Hessian estimates by Assumption \ref{(H2b)}. This can been often ensured at the price of technical results  (such as the Lipschitz continuity of the Hessian or a quite large number of bounded moments of $X$).
Note that Assumption \ref{(H2b)} is required in the proof to make the rest terms (see \eqref{dectheta}) negligible, and let the martingale term govern the rate of convergence. 
Relaxing such an assumption in view of obtaining similar theoretical guarantees should be doable but requires to consequently modify the proof strategy.   
In Section \ref{subsec::convwasn}, we will see how to relax Assumption \ref{(H2b)} for a modified version of the stochastic Newton algorithm.
\section{The weighted averaged stochastic Newton algorithm}

\label{sec:ASN_algo}

In this section, we propose a modified version of the SNA, by allowing non-uniform or uniform averaging over the iterates. This leads to the weighted averaged stochastic Newton algorithm (WASNA). To our knowledge, this is the first time that {
a general WASNA covering a large spectrum of applications is studied, allowing non-uniform weighting as well.
}

\subsection{Implementation of the WASNA}
\label{subsec:ASN_algo}

\subsubsection{The algorithm}
As mentioned in \cite{CGBP2020}, considering the stochastic Newton algorithm ends up taking decreasing steps at the rate $1/n$ (up to a matrix multiplication), which can clog up the dynamics of the algorithm. A bad initialization can then become a real obstacle to the high performance of the method.  
To circumvent this issue, we consider the Weighted Averaged Stochastic Newton Algorithm (WASNA) defined recursively for all $n\geq 0$ by
\begin{align}
\label{def::thetatilde}\tilde{\theta}_{n+1} & = \tilde{\theta}_{n} - \gamma_{n+1} \overline{S}_{n}^{-1}\nabla_{h} g \left( X_{n+1} , \tilde{\theta}_{n} \right) \\
\label{def::genmoy} \theta_{n+1,\tau} & = \left( 1- \tau_{n+1} \right)\theta_{n,\tau} + \tau_{n+1}\tilde{\theta}_{n+1} ,
\end{align}
given 
\begin{itemize}
    \item finite starting points $\theta_{\tau,0} = \tilde{\theta}_{0}$,
    \item $\gamma_{n} = \frac{c_{\gamma}}{\left(n+c_{\gamma}'\right)^{\gamma}}$ with $c_{\gamma}> 0$, $c_{\gamma}' \geq 0$ and $\gamma \in (1/2,1)$,
    \item $\overline{S}_{n}^{-1}$ a recursive estimate of $H^{-1}$, chosen symmetric and positive at each iteration,
    \item the weighted averaging sequence  $\left( \tau_{n} \right)$ that should satisfy
\begin{itemize}
\item $\left( \tau_{n} \right)$ is $\mathcal{G}\mathcal{S} ( {-1}) $  (see \cite{mokkadem2011generalization}), i.e.\ 
\begin{align}
\label{eq:weight_seq_cond1}
n \left( 1- \frac{\tau_{n-1}}{\tau_{n}} \right) \xrightarrow[n\to + \infty]{} -1 .
\end{align}
\item There is a constant 
$\tau >  {1/2}$ such that
\begin{align}
\label{eq:weight_seq_cond2}
n\tau_{n} \xrightarrow[n\to + \infty]{} \tau .
\end{align}
\end{itemize}
\end{itemize}  


 { By choosing different sequences $(\tau_{n})_n$, one can play more or less on the strength of the last iterates in the optimization.
This strategy can be indeed motivated by limiting the effect of bad initialization of the algorithms.
For instance, choosing $\tau_{n} = \frac{1}{n+1}$ (which is compatible with \eqref{eq:weight_seq_cond1} and \eqref{eq:weight_seq_cond2}) leads to the "usual" averaging in stochastic algorithms (see \citep{CGBP2020} for instance for an averaged version of a stochastic Newton algorithm specific to the non-linear regression setting).
This approach can be generalized considering $\tau_{n} = \frac{\log(n+1)^{\omega}}{\sum_{k=0}^{n} \log(k+1)^{\omega}}$ with $\omega > 0
$ which leads to a Weighted Averaged Stochastic Newton Algorithm.
For both averaging, since $\tau =1$, one has the following unified iterates
\begin{equation}\label{def::thetan1}
\theta_{n,1} = \frac{1}{\sum_{k=0}^{n}\log(k+1)^{\omega}}\sum_{k=0}^{n}\log(k+1)^{\omega}\tilde{\theta}_k,
\end{equation}
where choosing $\omega =0$ leads to the "usual" averaging technique.
}

 {In addition, note that the only difference between $\overline{H}_{n}^{-1}$ and $\overline{S}_{n}^{-1}$ is that they respectively depend on $(\theta_n)_n$ defined in \eqref{algo:sto_newton_iterate} and  $(\theta_{n,\tau})_n$ (or eventually $(\tilde{\theta}_{n})_n$) given in \eqref{def::genmoy}.
{Here again, if natural Hessian estimates admit the form \eqref{form1esthessian}, the Riccatti's trick used in \eqref{eq:riccatti_trick} can be applied to directly update $\overline{S}_{n}^{-1}$.}
 {In Appendix \ref{sec::app::theoretical}}, we exemplify the construction of recursive estimates $(\overline{S}_{n}^{-1})_n$ for the Hessian inverse in the cases of linear, logistic and softmax regressions.}



\subsection{Convergence results}
\label{subsec::convwasn}



 {As in the case of the SNA, we need to prevent the possible divergence of the eigenvalues of $\overline{S}_{n}$ and $\overline{S}_{n}^{-1}$ to ensure the convergence of the WASNA estimates \eqref{def::thetatilde} and \eqref{def::genmoy}. 
}

\begin{theo}\label{theothetanbar}
Under Assumptions \ref{(A1a)}, \ref{(A1b)}, \ref{(A2a)}, \ref{(A2b)},  { suppose in addition that $\overline{S}_{n}$ verifies \ref{(H1)} with $\beta < \gamma -1/2$}.  The iterates of the weighted averaged stochastic Newton algorithm given in \eqref{def::thetatilde} and \eqref{def::genmoy} therefore satisfy
\[
\tilde{\theta}_{n} \xrightarrow[n\to + \infty]{a.s.\ } \theta \quad \quad \quad \quad \text{and} \quad \quad \quad \quad {\theta}_{n,\tau} \xrightarrow[n\to + \infty]{a.s.\ } \theta .
\]
\end{theo}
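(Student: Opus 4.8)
The plan is to prove the two assertions separately: first the almost sure convergence of the non-averaged iterates $\tilde\theta_n$ defined in \eqref{def::thetatilde}, and then to transfer it to the weighted average $\theta_{n,\tau}$ defined in \eqref{def::genmoy}. For the first part, I would observe that \eqref{def::thetatilde} is an instance of the general preconditioned stochastic gradient recursion already handled for the SNA behind Theorem \ref{snaas}, now with the step sequence $\gamma_{n+1}$ and the preconditioner $\overline{S}_n^{-1}$ in place of $(n+1+c_\theta)^{-1}$ and $\overline{H}_n^{-1}$. Writing $\xi_{n+1} = \nabla_h g(X_{n+1},\tilde\theta_n) - \nabla G(\tilde\theta_n)$, the independence of $X_{n+1}$ from $\mathcal{F}_n$ together with the $\mathcal{F}_n$-measurability of $(\overline{S}_n,\tilde\theta_n)$ makes $(\xi_{n+1})$ a martingale increment sequence. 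The central device is the Robbins-Siegmund theorem applied to the Lyapunov function $V_n := G(\tilde\theta_n) - G(\theta) \ge 0$ rather than to the squared distance; this choice is essential, since the preconditioner then enters only through the sign-definite quadratic form $\langle \nabla G(\tilde\theta_n), \overline{S}_n^{-1}\nabla G(\tilde\theta_n)\rangle$. Concretely, the bounded-Hessian Assumption \ref{(A2a)} yields the descent inequality $G(\tilde\theta_{n+1}) \le G(\tilde\theta_n) + \langle \nabla G(\tilde\theta_n), \tilde\theta_{n+1}-\tilde\theta_n\rangle + \tfrac{L_{\nabla G}}{2}\|\tilde\theta_{n+1}-\tilde\theta_n\|^2$; substituting \eqref{def::thetatilde} and taking the conditional expectation given $\mathcal{F}_n$ gives, a.s.,
\[
\mathbb{E}\left[V_{n+1} \mid \mathcal{F}_n\right] \le V_n - \gamma_{n+1}\left\langle \nabla G(\tilde\theta_n), \overline{S}_n^{-1}\nabla G(\tilde\theta_n)\right\rangle + \frac{L_{\nabla G}}{2}\gamma_{n+1}^2 \lambda_{\max}\!\left(\overline{S}_n^{-1}\right)^2 \mathbb{E}\left[\left\|\nabla_h g(X_{n+1},\tilde\theta_n)\right\|^2 \mid \mathcal{F}_n\right].
\]

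Bounding the last conditional moment by $C + C' V_n$ through Assumption \ref{(A1b)} and invoking \ref{(H1')} to replace $\lambda_{\max}(\overline{S}_n^{-1})^2$ by $O(n^{2\beta})$ produces a Robbins-Siegmund recursion $\mathbb{E}[V_{n+1}\mid\mathcal{F}_n] \le (1 + O(\gamma_{n+1}^2 n^{2\beta}))V_n - \gamma_{n+1}\langle \nabla G(\tilde\theta_n), \overline{S}_n^{-1}\nabla G(\tilde\theta_n)\rangle + O(\gamma_{n+1}^2 n^{2\beta})$. Here the compatibility condition $\beta < \gamma - 1/2$ of \ref{(H1')} plays its decisive role: as $\gamma_{n+1}$ is of order $n^{-\gamma}$, one has $\gamma_{n+1}^2 n^{2\beta} = O(n^{2\beta-2\gamma})$ with $2\beta - 2\gamma < -1$, so the perturbation and remainder terms are summable a.s. Robbins-Siegmund then yields that $V_n$ converges a.s. to a finite limit and that $\sum_n \gamma_{n+1}\langle \nabla G(\tilde\theta_n), \overline{S}_n^{-1}\nabla G(\tilde\theta_n)\rangle < \infty$ a.s.; using $\lambda_{\max}(\overline{S}_n) = O(1)$, so that $\overline{S}_n^{-1}$ has eigenvalues bounded below a.s., this forces $\sum_n \gamma_{n+1}\|\nabla G(\tilde\theta_n)\|^2 < \infty$. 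Since $\gamma \in (1/2,1)$ gives $\sum_n \gamma_{n+1} = \infty$, we obtain $\liminf_n \|\nabla G(\tilde\theta_n)\| = 0$ a.s. Combining this with the convergence of $V_n$, and using that \ref{(A2b)} together with the convexity of $G$ makes $\theta$ the unique minimizer, a standard identification argument forces the limit of $V_n$ to be $0$ and hence $\tilde\theta_n \to \theta$ a.s.

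For the second assertion I would no longer use the dynamics but only the already-established fact $\tilde\theta_n \to \theta$. The averaging relation \eqref{def::genmoy} expresses $\theta_{n,\tau}$ as a weighted average of the $\tilde\theta_k$, and the conditions \eqref{eq:weight_seq_cond1}--\eqref{eq:weight_seq_cond2} on $(\tau_n)$ guarantee $\tau_n \to 0$, $\sum_n \tau_n = \infty$ and $0 \le \tau_n \le 1$ eventually, i.e.\ a regular (Toeplitz-type) weighting scheme. A generalized Toeplitz/Cesàro lemma then transfers the convergence of $\tilde\theta_n$ to its weighted averages, yielding $\theta_{n,\tau} \to \theta$ a.s.

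I expect the main obstacle to be the closing of the Robbins-Siegmund scheme in the presence of a random preconditioner whose largest eigenvalue is controlled only up to the diverging factor $O(n^\beta)$: the noise contribution is inflated by $n^{2\beta}$, and the summability needed for Robbins-Siegmund goes through only because of the sharp budget $\beta < \gamma - 1/2$, which is exactly what \ref{(H1')} provides and which distinguishes this analysis from the $1/n$-step SNA. A secondary delicate point is the identification of the limit, namely upgrading the facts that $G(\tilde\theta_n)$ converges and $\liminf_n\|\nabla G(\tilde\theta_n)\| = 0$ to the genuine convergence $\tilde\theta_n \to \theta$; this relies on convexity and the local strong convexity at $\theta$ furnished by \ref{(A2b)}, and, if needed, on an a priori boundedness argument for the iterates.
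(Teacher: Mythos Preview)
Your proposal is correct and follows essentially the same route as the paper's proof: a Robbins--Siegmund argument applied to the Lyapunov function $V_n = G(\tilde\theta_n) - G(\theta)$, using the descent inequality from \ref{(A2a)}, the moment bound \ref{(A1b)}, and the summability of $\gamma_{n+1}^2\|\overline{S}_n^{-1}\|_{op}^2$ granted by $\beta<\gamma-1/2$ in \ref{(H1')}, followed by the identification of the limit via convexity and \ref{(A2b)}. The paper's proof does not spell out the Toeplitz-type transfer to $\theta_{n,\tau}$ that you include, but this is the natural (and implicitly assumed) argument.
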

The proof is given in Appendix \ref{app:proof_as}. 
As expected, averaging does not introduce bias in the estimates.
 {
Similarly to the SNA, the strong consistency of the Hessian estimates is required to derive the rate of convergence of the WASNA estimates, as given in the following theorem.
}
\begin{theo}\label{theothetatilde}
Under Assumptions \ref{(A1)}, \ref{(A2a)}  {and} \ref{(A2b)}, suppose also that  {$\overline{S}_{n}$ converges almost surely to $H$ and that} there are positive constants $\eta >   \frac{1}{\gamma}-1$ 
and $C_{\eta}$ such that
\begin{equation}
\label{momenteta} \mathbb{E}\left[ \left\| \nabla_{h}g \left( X , h \right) \right\|^{2+2\eta} \right] \leq C_{\eta} \left( 1+ \left\| h - \theta \right\|^{2+2\eta} \right) .
\end{equation}
Then the iterates of the weighted averaged stochastic Newton algorithm given in \eqref{def::thetatilde} verify that
\[
\left\| \tilde{\theta }_{n} - \theta \right\|^{2} = O \left( \frac{\ln n}{n^{\gamma}} \right) \quad a.s.
\]
\end{theo}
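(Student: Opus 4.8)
The plan is to treat the non-averaged recursion~\eqref{def::thetatilde} as a preconditioned Robbins--Monro scheme and to run a Lyapunov analysis on $V_n := \nrm{\tilde\theta_n - \theta}^2$. The whole argument is local and rests on two facts: by Theorem~\ref{theothetanbar}, $\tilde\theta_n \to \theta$ almost surely, and by Assumption~\ref{(H2a')} together with \ref{(H1')}, $\overline{S}_n \to H$ and hence $\overline{S}_n^{-1} \to H^{-1}$ almost surely, so that $\lambda_{\max}(\overline{S}_n^{-1})$ is eventually bounded by a constant (the a priori control $O(n^\beta)$ of \ref{(H1')} is only needed before convergence is established). Writing $\xi_{n+1} := \nabla_h g(X_{n+1},\tilde\theta_n) - \nabla G(\tilde\theta_n)$, which is a martingale increment since $X_{n+1}$ is independent of $\mathcal{F}_n$, expanding the square gives
\begin{align*}
V_{n+1} = {}& V_n - 2\gamma_{n+1}\langle \overline{S}_n^{-1}\nabla G(\tilde\theta_n),\tilde\theta_n-\theta\rangle \\
& - 2\gamma_{n+1}\langle \overline{S}_n^{-1}\xi_{n+1},\tilde\theta_n-\theta\rangle + \gamma_{n+1}^2 \nrm{\overline{S}_n^{-1}\nabla_h g(X_{n+1},\tilde\theta_n)}^2 .
\end{align*}

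Next I would lower bound the drift $\langle \overline{S}_n^{-1}\nabla G(\tilde\theta_n),\tilde\theta_n-\theta\rangle$. Since $G$ is twice continuously differentiable with bounded Hessian (\ref{(A2a)}), a Taylor expansion at $\theta$ gives $\nabla G(\tilde\theta_n) = H(\tilde\theta_n - \theta) + o(\nrm{\tilde\theta_n-\theta})$ as $\tilde\theta_n \to \theta$; substituting, using $\overline{S}_n^{-1}\to H^{-1}$ and the bound $\nrm{\nabla G(\tilde\theta_n)} \le L_{\nabla G}\nrm{\tilde\theta_n-\theta}$, yields $\langle \overline{S}_n^{-1}\nabla G(\tilde\theta_n),\tilde\theta_n-\theta\rangle = V_n(1+o(1))$. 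The decisive point is that the preconditioner cancels the Hessian ($H^{-1}H = I$), so the effective contraction constant is $1$, independent of the conditioning of $H$. As only the local positivity \ref{(A2b)} is available (there is no global strong convexity), this bound holds only beyond a random rank, which is harmless because $\tilde\theta_n$ is eventually trapped in the neighborhood where the expansion is valid. For the second order term I would use \ref{(A1b)} and $G(\tilde\theta_n)-G(\theta)\le \tfrac{L_{\nabla G}}{2}V_n$ to get $\EE{\nrm{\overline{S}_n^{-1}\nabla_h g(X_{n+1},\tilde\theta_n)}^2 \mid \mathcal{F}_n} \le \lambda_{\max}(\overline{S}_n^{-1})^2 \big(C + \tfrac{C'L_{\nabla G}}{2} V_n\big)$, which, by the eventual boundedness of $\lambda_{\max}(\overline{S}_n^{-1})$ and the continuity \ref{(A1c)} of $\Sigma$ at $\theta$, is eventually $C_0 + o(1)$ with $C_0 = \mathrm{tr}(H^{-1}\Sigma H^{-1})$ finite. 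Taking $\EE{\,\cdot \mid \mathcal{F}_n}$ then produces the recursion $\EE{V_{n+1}\mid\mathcal{F}_n} \le \big(1-\gamma_{n+1}(1+o(1))\big)V_n + \gamma_{n+1}^2(C_0+o(1))$, whose deterministic solution is of order $\gamma_n = O(n^{-\gamma})$.

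To upgrade this in-mean contraction into the announced almost sure rate, I would iterate the recursion and write $V_n$ as a deterministic part plus a weighted martingale sum $\sum_{k\le n}\big(\prod_{j>k}(1-\gamma_j(1+o(1)))\big)\gamma_k\langle \overline{S}_{k-1}^{-1}\xi_k,\tilde\theta_{k-1}-\theta\rangle$. The deterministic part contributes $O(n^{-\gamma})$, and the lower-order $V_n$ part of the noise is negligible since $\lambda_{\max}(\overline{S}_n^{-1})^2\gamma_{n+1}^2 = O(n^{-2\gamma}) = o(\gamma_n)$. The remaining task is the almost sure control of the weighted martingale sum, and this is exactly where the moment assumption~\eqref{momenteta} with $\eta > \tfrac{1}{\gamma}-1$, i.e.\ $\gamma(1+\eta)>1$, is used: it makes the associated quadratic-variation series converge and allows a strong law for martingales to apply, producing an almost sure bound of order $O(n^{-\gamma}\ln n)$ on this sum. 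The logarithmic factor is the generic price of passing from an in-mean to an almost sure statement, and combining the two contributions gives $V_n = O(\ln n / n^\gamma)$ almost surely, mirroring the $O(\ln n / n)$ rate obtained for the SNA in Theorem~\ref{theoratetheta}.

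The main obstacle is the drift lower bound without global strong convexity: one can exploit the Hessian positivity \ref{(A2b)} only locally, so the argument must be localized through the almost sure convergence of Theorem~\ref{theothetanbar} while simultaneously accommodating the random preconditioner $\overline{S}_n^{-1}$, controlled a priori only at the crude rate $O(n^\beta)$ by \ref{(H1')} and convergent by \ref{(H2a')}. The second, more technical difficulty is the almost sure (rather than merely in-mean) control of the weighted martingale term, which is precisely where the higher-moment condition~\eqref{momenteta} and the resulting $\ln n$ overhead enter.
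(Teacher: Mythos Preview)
Your Lyapunov expansion of $V_n=\|\tilde\theta_n-\theta\|^2$ is not the route taken in the paper, and the difference is exactly where your argument is incomplete. The paper linearises the gradient at $\theta$ and writes the \emph{vector} recursion
\[
\tilde\theta_{n+1}-\theta=(1-\gamma_{n+1})(\tilde\theta_n-\theta)-\gamma_{n+1}H^{-1}\tilde\delta_n-\gamma_{n+1}\big(\overline S_n^{-1}-H^{-1}\big)\nabla G(\tilde\theta_n)+\gamma_{n+1}\overline S_n^{-1}\tilde\xi_{n+1},
\]
iterates it, and splits $\tilde\theta_n-\theta$ into an exponentially negligible initial term $\beta_{n,0}(\tilde\theta_0-\theta)$ with $\beta_{n,k}=\prod_{j>k}(1-\gamma_j)$, a martingale $\tilde M_n=\sum_k\beta_{n,k+1}\gamma_{k+1}\overline S_k^{-1}\tilde\xi_{k+1}$, and a remainder $\tilde\Delta_n$. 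The decisive feature is that the increments of $\tilde M_n$ are $\overline S_k^{-1}\tilde\xi_{k+1}$, whose conditional $(2{+}2\eta)$-moments are controlled by~\eqref{momenteta} and the a.s.\ convergence of $\tilde\theta_k$ \emph{without any factor $\|\tilde\theta_k-\theta\|$ entering}. A dedicated martingale rate result (the paper invokes Theorem~6.1 of~\cite{CGBP2020}, and this is where $\gamma(1+\eta)>1$ is actually used) then gives $\|\tilde M_n\|^2=O(\ln n/n^\gamma)$ directly, after which $\tilde\Delta_n$ is absorbed by a scalar stabilisation lemma since both $\|\tilde\delta_n\|$ and $\|(\overline S_n^{-1}-H^{-1})\nabla G(\tilde\theta_n)\|$ are $o(\|\tilde\theta_n-\theta\|)$.

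In your squared-norm expansion, by contrast, the weighted martingale has increments $\gamma_k\langle\overline S_{k-1}^{-1}\xi_k,\tilde\theta_{k-1}-\theta\rangle$, whose conditional second moment is of order $V_{k-1}$. This coupling is the gap. Write the sum as $\alpha_{n,0}\tilde N_n$ with $\alpha_{n,k}=\prod_{j>k}(1-2\gamma_j(1+o(1)))$; then $\langle\tilde N\rangle_n\lesssim\sum_k\alpha_{k,0}^{-2}\gamma_k^2V_{k-1}$, and since $\alpha_{k,0}^{-2}$ grows like $\exp(ck^{1-\gamma})$ one has $\ln\langle\tilde N\rangle_n\asymp n^{1-\gamma}$, not $\ln n$. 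Any strong law of the type $|\tilde N_n|^2=O\big(\langle\tilde N\rangle_n\,(\ln\langle\tilde N\rangle_n)^{1+\delta}\big)$ therefore yields only $|\alpha_{n,0}\tilde N_n|=O\big(n^{(1-2\gamma)/2+\epsilon}\big)$, which dominates $n^{-\gamma}$ for every $\gamma<1$. Bootstrapping by feeding improved rates for $V_k$ back into the bracket does not close the gap either: the recursion $r\mapsto(2\gamma+r-1)/2$ has fixed point $2\gamma-1<\gamma$. Your in-mean contraction is fine; it is the passage to the almost sure rate through this cross term that does not deliver $O(\ln n/n^\gamma)$ as claimed, and it is precisely to avoid this coupling that one decomposes $\tilde\theta_n-\theta$ rather than its square.
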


The proof is given in Appendix \ref{app:proof_theothetatilde}. 
Theorem \ref{theothetatilde} involves a standard extra assumption in \eqref{momenteta}, which matches the usual ones for stochastic gradient algorithms \cite{pelletier1998almost,GB2017}.
Note that this theoretical result could be adapted to the case of an inconsistent estimate $S_n$ of $H$, at the price of extra technicalities. However, in practice, the consistency of the estimates $\theta_{n,\tau}$ (given by Theorem \ref{theothetanbar}) quasi-systematically leads to the consistency of the Hessian estimates, see the different examples developed in Section \ref{sec::app::theoretical}.

\begin{theo}\label{theomoy}
Under Assumptions \ref{(A1)},\ref{(A2a)},\ref{(A2b)} and Inequality \eqref{momenteta},  suppose in addition that $\overline{S}_{n}$ verifies \ref{(H2b)} vith $p_{H} > \frac{1- \gamma}{2}$.  Then the iterates of the weighted averaged stochastic Newton algorithm defined in \eqref{def::genmoy} satisfy
\[
\left\| \theta_{n,\tau} - \theta \right\|^{2} = O \left( \frac{\ln n}{n} \right) \quad a.s.
\]
and
\[
\sqrt{n} \left( \theta_{n,\tau } - \theta \right) \xrightarrow[n\to + \infty]{\mathcal{L}}  \mathcal{N}\left( 0 , \frac{\tau^{2}}{2\tau -1} H^{-1} \Sigma H^{-1} \right)
\]
with $\Sigma := \Sigma (\theta) = \mathbb{E}\left[ \nabla_{h} g \left( X,\theta \right) \nabla_{h} \left( X, \theta \right)^{T} \right]$.
\end{theo}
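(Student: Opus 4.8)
The plan is to transfer the behaviour already established for the non-averaged iterates $\tilde\theta_n$ (Theorem \ref{theothetatilde}) to the weighted average $\theta_{\tau,n}$, following the weighted-averaging strategy of \cite{mokkadem2011generalization} adapted to the Newton preconditioning. Throughout I write $\delta_n = \tilde\theta_n - \theta$ and split the stochastic gradient into its mean and a martingale increment, $\nabla_h g(X_{n+1},\tilde\theta_n) = \nabla G(\tilde\theta_n) + \xi_{n+1}$ with $\xi_{n+1} = \nabla_h g(X_{n+1},\tilde\theta_n) - \nabla G(\tilde\theta_n)$, which is an $\mathcal{F}_{n+1}$-martingale difference since $X_{n+1}$ is independent of $\mathcal{F}_n$ and $\tilde\theta_n$ is $\mathcal{F}_n$-measurable. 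Using $\nabla G(\theta) = 0$ together with the Lipschitz continuity of the Hessian near $\theta$ (Assumption \ref{(A2c)}), I would Taylor-expand $\nabla G(\tilde\theta_n) = H\delta_n + R_n$ with $\nrm{R_n} \le \tfrac{1}{2} L_{A,\nabla^2 G}\nrm{\delta_n}^2$ on the event $\{\tilde\theta_n \in \mathcal{B}(\theta, A_{\nabla^2 G})\}$, which holds eventually by the almost sure convergence granted in Theorem \ref{theothetanbar}.

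Rewriting the recursion \eqref{def::thetatilde} as $\overline{S}_n^{-1}(H\delta_n + R_n + \xi_{n+1}) = (\delta_n - \delta_{n+1})/\gamma_{n+1}$ and left-multiplying by $H^{-1}\overline{S}_n$, I isolate the leading linear term,
\begin{equation}
\delta_n = -H^{-1}\xi_{n+1} + H^{-1}\overline{S}_n \frac{\delta_n - \delta_{n+1}}{\gamma_{n+1}} - H^{-1}R_n.
\end{equation}
Unrolling the averaging recursion \eqref{def::genmoy} gives $\theta_{\tau,n} - \theta = \sum_{k=0}^{n} \pi_{k,n}\delta_k$ with $\pi_{k,n} = \tau_k \prod_{j=k+1}^n (1-\tau_j)$ plus a vanishing initial contribution. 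Substituting the display above produces a decomposition of $\sqrt{n}(\theta_{\tau,n} - \theta)$ into (i) a martingale transform $-\sqrt{n}\sum_k \pi_{k,n} H^{-1}\xi_{k+1}$, (ii) a telescoping term built from $H^{-1}\overline{S}_k (\delta_k - \delta_{k+1})/\gamma_{k+1}$, (iii) a remainder term in $R_k$, and (iv) a discrepancy term measuring $\overline{S}_k - H$.

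For the dominant martingale term I would apply a martingale central limit theorem of Lindeberg type. The conditional covariance of $\xi_{k+1}$ converges to $\Sigma = \Sigma(\theta)$ by the continuity of $\Sigma$ at $\theta$ (Assumption \ref{(A1c)}) and $\tilde\theta_k \to \theta$; combined with the weight asymptotics forced by \eqref{eq:weight_seq_cond1}--\eqref{eq:weight_seq_cond2}, namely $n\sum_{k}\pi_{k,n}^2 \to \frac{\tau^2}{2\tau+\nu}$ (a computation consistent with $\nu=-1$, $\tau=1$ giving the classical factor $1$ for uniform averaging), this yields the announced limiting covariance $\frac{\tau^2}{2\tau+\nu} H^{-1}\Sigma H^{-1}$, the Lindeberg condition being checked through the moment bound \eqref{momenteta}. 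The almost sure rate $\nrm{\theta_{\tau,n}-\theta}^2 = O(\ln n / n)$ is obtained in parallel: the martingale transform is controlled by a strong law for martingales, and the remaining terms are shown to be of smaller order using the rate $\nrm{\delta_n}^2 = O(\ln n / n^\gamma)$ from Theorem \ref{theothetatilde}.

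The main obstacle is the control of the auxiliary terms (ii)--(iv), which is where the precise constraints on the exponents enter. The telescoping term (ii) requires a summation-by-parts (Abel) argument exploiting the regular variation of $\gamma_n$ and the $\mathcal{GS}(\nu)$ property of $\tau_n$: after resummation both its boundary contributions and its bulk must be shown to be $o(1/\sqrt{n})$, which is precisely what makes the Newton preconditioning $\overline{S}_n^{-1}H \to I$ contribute no curvature-dependent bias. The remainder term (iii) is handled by \eqref{momenteta} together with the almost sure rate on $\delta_k$, and the discrepancy term (iv) is exactly where Assumption \ref{(H2b')} is needed: the condition $p_S > 1/2 - \gamma/2$ guarantees that $\sqrt{n}\sum_k \pi_{k,n}(\overline{S}_k - H)(\cdots)$ vanishes. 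Balancing these negligibility estimates against the scaling $\sqrt{n}\,\pi_{k,n}$ is the delicate bookkeeping at the heart of the proof, and the efficiency-type variance emerges precisely because the preconditioner converges to $H^{-1}$.
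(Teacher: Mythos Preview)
Your plan is essentially the paper's own proof: the same substitution of the one–step recursion for $\tilde\theta_n-\theta$ into the weighted average, the same Abel/summation-by-parts treatment of the telescoping term via the $\mathcal{GS}(\nu)$ property and Lemma~\ref{lemmok}, the same use of \ref{(H2b')} for the Hessian discrepancy, and the same martingale CLT with Lindeberg checked through \eqref{momenteta}. The only cosmetic difference is your grouping --- you carry $H^{-1}\overline S_k$ into the telescoping term and keep the martingale increment as the clean $H^{-1}\xi_{k+1}$, whereas the paper keeps $\overline S_k^{-1}\tilde\xi_{k+1}$ as the martingale term and isolates $(\overline S_k^{-1}-H^{-1})\nabla G(\tilde\theta_k)$ directly; these two arrangements are algebraically equivalent (your ``discrepancy term (iv)'' unpacks into the paper's discrepancy term plus a negligible martingale correction), so nothing substantive changes.
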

The proof is given in Appendix \ref{app:proof_thm_theomoy}. 
One could note that Assumption {\ref{(H2b)}} is required not only to establish the asymptotic normality of the iterates, but also to get the rate of convergence of the WASNA iterates defined in \eqref{def::genmoy}. This was not the case  for the SNA.

\begin{rmq}[On the asymptotic optimality of weighted versions]
{
Particularizing Theorem \ref{theomoy} to the weighted averaged stochastic Newton algorithm defined by \eqref{def::thetan1} gives
\[
\sqrt{n} \left( {\theta}_{n,1} - \theta \right) \xrightarrow[n\to + \infty]{\mathcal{L}} \mathcal{N}\left( 0 , H^{-1} \Sigma H^{-1} \right) 
\]
i.e these estimates are asymptotically efficient for both considered averaging methods (the usual and the logarithmic ones). 
Beware, the choice of weights is crucial as it can damage the asymptotic variance. 
Indeed, using another non-uniform weighting of the form $\theta_{n,1+\omega} =   \frac{1}{\sum_{k=0}^{n}  (k+1)^{\omega}} \sum_{k=0}^{n}   (k+1)^{\omega} \tilde{\theta}_{k}  $ leads to
\[
\sqrt{n} \left( \theta_{n,1+\omega} - \theta \right) \xrightarrow[n\to + \infty]{\mathcal{L}} \mathcal{N}\left( 0 , \frac{( 1+ \omega)^{2}}{2\omega +1} H^{-1}\Sigma H^{-1} \right).
\]
}
\end{rmq}

If  {Assumption \ref{(H2b)}} may represent a theoretical lock for the application of Theorem \ref{theomoy}, it can be by-passed: 
the idea is to exploit a particular structure of the Hessian estimates, to derive the rate of convergence and the asymptotic normality of the WASNA iterates. This is the purpose of the following theorem.
\begin{theo}\label{jenaipleinle}
Suppose that the Hessian estimates $(\overline{S}_{n})_n$ in the WASNA iterate \eqref{def::thetatilde} is of the form
\[
\overline{S}_{n} = \frac{1}{n+1} \left( S_{0} + \sum_{k=1}^{n} \overline{u}_{k} \overline{\Phi}_{k} \overline{\Phi}_{k}^{T} + \sum_{k=1}^{n} \frac{c_{\beta}}{k^{\beta}}Z_{k}Z_{k}^{T} \right)
\]
with $S_{0}$ a symmetric and positive matrix, $c_{\beta}\geq 0$, $\beta\in (\gamma -1/2)$ and
\begin{align*}
& \overline{u}_{k} = u_{k} \left( X_{k} , \theta_{\tau,k-1} \right) \in \mathbb{R} & \overline{\Phi}_{k} = \Phi_{k} \left( X_{k} ,  \theta_{\tau,k-1} \right)\in \mathbb{R}^d,
\end{align*}
and $(Z_{k})_k$ standard Gaussian vectors in dimension $d$.
Assume that
\begin{itemize}
\item for all $\delta > 0$, there is a positive constant $C_{\delta}$ such that for all $k$, 
\begin{equation}
\label{majinthess}\mathbb{E}\left[ \left\| \overline{u}_{k} \overline{\Phi}_{k} \overline{\Phi}_{k}^{T} \right\| \mathbf{1}_{\left\lbrace \left\| \theta_{\tau,k-1} - \theta \right\| \leq (\ln k)^{1/2+ \delta}\sqrt{\gamma}_{k} \right\rbrace} |\mathcal{F}_{k-1}\right]  \leq C_{\delta},
\end{equation}
\item there is $\alpha \in (1/2 , \tau)$ and $\delta > 0$ such that
\begin{align}
\notag
\label{sumfini} & \sum_{k\geq 0} (k+1)^{2\alpha}\frac{\tau_{k+1}^{2}}{\gamma_{k+1}}\frac{(\ln k)^{1+\delta}}{(k+1)^{2}} \\ 
& \times \mathbb{E}\left[  \left\| \overline{u}_{k} \overline{\Phi}_{k} \overline{\Phi}_{k}^{T} \right\|^{2} \mathbf{1}_{\left\lbrace \left\| \theta_{\tau,k-1} - \theta \right\| \leq (\ln k)^{1/2+ \delta}\sqrt{\gamma}_{k} \right\rbrace} |\mathcal{F}_{k-1}\right] < + \infty \quad a.s.\
\end{align}
\end{itemize}
Under the additional Assumptions \ref{(A1)}, \ref{(A2a)}, \ref{(A2b)} and  \eqref{momenteta}, if $\overline{S}_{n}$ converges almost surely to $H$, the iterates of the weighted averaged stochastic Newton algorithm defined in \eqref{def::genmoy} satisfy
\[
\left\| \theta_{n,\tau} - \theta \right\|^{2} = O \left( \frac{\ln n}{n} \right) \quad a.s.\
\]
and
\[
\sqrt{n} \left( \theta_{n,\tau} - \theta \right) \xrightarrow[n\to + \infty]{} \mathcal{N} \left( 0 , \frac{\tau^{2}}{2\tau -1 } H^{-1}\Sigma H^{-1} \right)
\]
with $\Sigma := \mathbb{E}\left[ \nabla_{h} g \left( X, \theta \right) \nabla_{h} g \left( X, \theta \right)^{T} \right] $.
\end{theo}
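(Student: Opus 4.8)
The plan is to reproduce the conclusion of Theorem~\ref{theomoy} while replacing the use of Assumption~\ref{(H2b')} by a direct exploitation of the prescribed form of $(\overline{S}_n)_n$. Set $\Delta_k := \tilde\theta_k - \theta$ and $\xi_{k+1} := \nabla_h g(X_{k+1},\tilde\theta_k) - \nabla G(\tilde\theta_k)$, so that $(\xi_{k+1})_k$ is a martingale-difference sequence adapted to $(\mathcal{F}_k)$. Multiplying the recursion \eqref{def::thetatilde} by $\overline{S}_k/\gamma_{k+1}$ and expanding $\nabla G(\tilde\theta_k)=H\Delta_k+r_k$ (with $\|r_k\|=O(\|\Delta_k\|^2)$ by Taylor's formula and \ref{(A2c)}) yields
\[
H\Delta_k=\frac{\overline{S}_k(\Delta_k-\Delta_{k+1})}{\gamma_{k+1}}-\xi_{k+1}-r_k .
\]
Writing the weighted average as $\theta_{\tau,n}-\theta=T_n^{-1}\sum_{k=0}^{n}\pi_k\Delta_k$, with $\pi_k$ the unnormalized weights and $T_n=\sum_{k=0}^n\pi_k$, and substituting $H^{-1}\overline{S}_k=I+H^{-1}(\overline{S}_k-H)$, one obtains a decomposition $\sum_k\pi_k\Delta_k=A_n+B_n-C_n-D_n$, where $C_n=\sum_k\pi_k H^{-1}\xi_{k+1}$ is the leading martingale, $A_n$ is a telescoping term, $D_n=\sum_k\pi_k H^{-1}r_k$ is the second-order remainder, and
\[
B_n=\sum_{k}\pi_k\,H^{-1}(\overline{S}_k-H)\,\overline{S}_k^{-1}\nabla_h g(X_{k+1},\tilde\theta_k)
\]
is the Hessian-mismatch term. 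The whole novelty lies in bounding $B_n$ without a pointwise rate on $\overline{S}_k-H$.

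The terms $A_n$, $C_n$, $D_n$ are treated exactly as in the proof of Theorem~\ref{theomoy}. The martingale central limit theorem applied to $T_n^{-1}\sqrt n\,C_n$, whose Lindeberg condition follows from \eqref{momenteta} and whose conditional covariance converges to $\Sigma$ by the continuity \ref{(A1c)} of $\Sigma$ at $\theta$ and the consistency $\tilde\theta_k\to\theta$ (Theorem~\ref{theothetanbar}), produces the announced Gaussian limit with covariance $\frac{\tau^2}{2\tau+\nu}H^{-1}\Sigma H^{-1}$; the growth conditions \eqref{eq:weight_seq_cond1}--\eqref{eq:weight_seq_cond2} on $(\tau_n)$ make $A_n$ contribute only negligible boundary terms, and $D_n$ is controlled through $\|\Delta_k\|^2=O(\ln k/k^\gamma)$ (Theorem~\ref{theothetatilde}). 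Before attacking $B_n$, I would localize: since $\|\tilde\theta_k-\theta\|^2=O(\ln k/k^\gamma)$ and the weights $(\tau_n)$ put mass on the most recent iterates, the averaged iterate inherits at least this rate, so that $\gamma/2<1/2$ guarantees that the truncation event $\{\|\theta_{\tau,k-1}-\theta\|\le(\ln k)^{1/2+\delta}\sqrt{\gamma_k}\}$ holds for all large $k$ almost surely. Hence the indicators in \eqref{majinthess} and \eqref{sumfini} are eventually equal to one, and these hypotheses become genuine conditional first- and second-moment bounds on the rank-one increments $\overline{u}_k\overline{\Phi}_k\overline{\Phi}_k^T$ along the actual trajectory.

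To handle $B_n$ I would use that $\overline{u}_k\overline{\Phi}_k\overline{\Phi}_k^T$ is, up to the deterministic regularization, the increment of $(k+1)\overline{S}_k$, which lets one rewrite the fluctuation of $\overline{S}_k-H$ in terms of these per-step increments and isolate within $B_n$ a true martingale driven by the centered increments $\overline{u}_k\overline{\Phi}_k\overline{\Phi}_k^T-\mathbb{E}[\,\cdot\,|\mathcal{F}_{k-1}]$ together with the noise $\xi_{k+1}$ (both disentangled through the filtration, $\overline{S}_k$ being $\mathcal{F}_k$-measurable and $\xi_{k+1}$ a martingale increment). Condition \eqref{sumfini} is exactly tailored so that the predictable quadratic variation of the $(k+1)^\alpha$-reweighted version of this martingale is almost surely finite; the martingale convergence theorem then forces that reweighted martingale to converge, and Kronecker's lemma, with the auxiliary exponent $\alpha\in(1/2,\tau)$ chosen so that the resulting decay beats the $T_n^{-1}\sqrt n$ normalization, yields $T_n^{-1}\sqrt n\,B_n\to0$ for the fluctuation part. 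The complementary drift part, carrying $\nabla G(\tilde\theta_k)$ in place of $\xi_{k+1}$, is bounded using the first-moment control \eqref{majinthess}: since $\|\nabla G(\tilde\theta_k)\|\le L_{\nabla G}\|\Delta_k\|=O(\sqrt{\ln k}\,k^{-\gamma/2})$ and $\overline{S}_k^{-1}$ is uniformly bounded in operator norm by \ref{(H1')}, this part is $o(\sqrt n)$ after normalization as well.

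The main obstacle is precisely the control of $B_n$: in the absence of Assumption~\ref{(H2b')} there is no pointwise bound $\|\overline{S}_k-H\|=O(k^{-p_S})$ to plug in, so the negligibility of $B_n$ cannot be obtained term by term and must instead be extracted from the martingale cancellation hidden in the centered rank-one increments of the Hessian estimate --- which is exactly why the hypotheses are phrased as the conditional-moment conditions \eqref{majinthess}--\eqref{sumfini} rather than as a convergence rate. The delicate accompanying points are the correct calibration of the Kronecker exponent $\alpha$ and the careful filtration bookkeeping ensuring that $(\overline{S}_k-H)\overline{S}_k^{-1}\xi_{k+1}$ genuinely forms a martingale-difference sequence. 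Once $B_n$ is shown negligible, both the almost sure rate $\|\theta_{\tau,n}-\theta\|^2=O(\ln n/n)$ (obtained by carrying the same decomposition almost surely rather than in distribution) and the central limit theorem follow as in Theorem~\ref{theomoy}.
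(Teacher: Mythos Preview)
Your high-level plan is correct: the point is to replace the pointwise rate \ref{(H2b')} by a direct use of the rank-one increment structure of $\overline{S}_n$, together with localization on the event where $\theta_{\tau,k-1}$ is already close to $\theta$, so that \eqref{majinthess} and \eqref{sumfini} become effective moment bounds. The treatment of the martingale term $C_n$, the remainder $D_n$, and the boundary terms in $A_n$ is indeed unchanged from Theorem~\ref{theomoy}.

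The difficulty is in how you set up $B_n$. By writing $H^{-1}\overline{S}_k=I+H^{-1}(\overline{S}_k-H)$ \emph{before} performing any summation by parts, you produce a term carrying the cumulative error $\overline{S}_k-H$, and this is precisely the object for which no rate is available. Your claimed bound on the drift part of $B_n$,
\[
\sum_k \pi_k\,H^{-1}(\overline{S}_k-H)\,\overline{S}_k^{-1}\nabla G(\tilde\theta_k),
\]
uses only $\|\overline{S}_k-H\|=o(1)$, $\|\overline{S}_k^{-1}\|=O(1)$ and $\|\nabla G(\tilde\theta_k)\|=O\big((\ln k)^{1/2}k^{-\gamma/2}\big)$; after normalization this gives $o\big((\ln n)^{1/2}n^{-\gamma/2}\big)$, and $\sqrt{n}\cdot n^{-\gamma/2}\to\infty$ since $\gamma<1$. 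This is exactly the estimate in Theorem~\ref{theomoy} that \emph{requires} \ref{(H2b')}. The same issue arises in the fluctuation part: the quadratic variation of $(\overline{S}_k-H)\overline{S}_k^{-1}\xi_{k+1}$ again involves $\|\overline{S}_k-H\|^2$. Rewriting $\overline{S}_k-H$ as a full sum of centered increments, as you suggest, turns $B_n$ into a double sum in which the ``martingale'' index and the averaging index are different, and the filtration bookkeeping you mention does not straighten this out.

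The paper avoids this by performing the Abel transform \emph{with $\overline{S}_k$ kept inside}: the second term in \eqref{decdegrosporc} is
\[
H^{-1}\sum_{k}\kappa_{n,k}\tau_{k+1}\,\overline{S}_k\,\frac{(\tilde\theta_k-\theta)-(\tilde\theta_{k+1}-\theta)}{\gamma_{k+1}},
\]
and summation by parts yields, besides harmless boundary terms, a remainder containing the \emph{one-step} difference
\[
\overline{S}_k-\overline{S}_{k-1}=\frac{1}{k+1}\Big(\overline{u}_k\overline{\Phi}_k\overline{\Phi}_k^T+\tfrac{c_\beta}{k^\beta}Z_kZ_k^T-\overline{S}_{k-1}\Big)
\]
acting on $\tilde\theta_k-\theta$. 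Now \eqref{majinthess} and \eqref{sumfini} apply directly: the $\overline{S}_{k-1}$ piece is bounded (Lemma~\ref{lemmok} handles it like $R_{1,n}$), and the rank-one piece, after localization on $\Omega_k$, is split into its conditional mean (controlled by \eqref{majinthess}) and a centered part whose contribution is handled by Robbins--Siegmund with \eqref{sumfini} giving the summability of the predictable increments of $V_n=n^{2\alpha}(\cdots)^2$. So the exponent $\alpha\in(1/2,\tau)$ is used here through Robbins--Siegmund rather than Kronecker, though the calibration you describe is the right one.

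In short: do not separate $\overline{S}_k$ from $H$ before the Abel transform. Keeping $\overline{S}_k$ inside is what converts the intractable cumulative error $\overline{S}_k-H$ into the single increment $\overline{S}_k-\overline{S}_{k-1}$ that the hypotheses are designed to control.
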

The proof is given in Appendix \ref{app:proof_thm_jenaipleinle}. 
The main interest of this theorem is that it enables to get the asymptotic normality of the estimates without having the rate of convergence of the estimates of the Hessian, at the price of a special structure of the latter. More specifically, contrary to \cite{BGBP2019}, no Lipschitz assumption on the functional 
$$
h \longmapsto \mathbb{E}\left[ u_{k} \left( X_{k} , h \right) \Phi_{k} \left( X_{k}, h \right) \Phi_{k}\left( X_{k}, h \right)^{T} |\mathcal{F}_{k-1}\right]
$$
is needed. Note in particular that \eqref{sumfini} is verified for all $\alpha < \frac{3-\gamma}{2}$ since  $$\mathbb{E}\left[  \left\| \overline{u}_{k} \overline{\Phi}_{k} \overline{\Phi}_{k}^{T} \right\|^{2} \mathbf{1}_{\left\lbrace \left\| \theta_{\tau,k-1} - \theta \right\| \leq (\ln (k))^{1/2+ \delta}\sqrt{\gamma}_{k} \right\rbrace} |\mathcal{F}_{k-1}\right] $$ 
is uniformly bounded.

{Theorem \ref{jenaipleinle} will be particularly useful for the coming practical applications, since this special structure in the Hessian estimates is met in the case of linear, logistic and softmax regressions.}
\section{Applications}\label{sec::applications}

 In this section, we assess the numerical performance of the weighted stochastic Newton algorithm 
 and compare it to that of second-order online algorithms {(whose different properties are summarized in Table \ref{tab::resume})}:
\begin{itemize}
    \item the stochastic Newton algorithm {(SNA)} defined in \eqref{algo:sto_newton_iterate} with a step in $1/n$, similar to the one studied in \cite{BGBP2019} specifically for the logistic regression;
    \item the stochastic Newton algorithm {(SNA)} defined in \eqref{algo:sto_newton_iterate} with a step in $1/n^{3/4}$;
    \item the {weighted} averaged stochastic Newton algorithm {(WASNA)} given in \eqref{def::thetatilde} and \eqref{def::genmoy}, with standard weighting ($\tau_{n} = 1/(n+1)$);
    \item the weighted averaged stochastic Newton algorithm (log-WASNA) given in \eqref{def::thetatilde} and \eqref{def::genmoy} with logarithmic weighting ($\tau_{n} = \frac{\log(n+1)^{\omega}}{\sum_{k=0}^{n} \log(k+1)^{\omega}}$ and $\omega =2$); 
\end{itemize}
with first-order online methods:
\begin{itemize}
    \item the stochastic gradient algorithm (SGD) \cite{pelletier1998almost} with step $1/n^{3/4}$;
    \item the averaged Stochastic Gradient Algorithm (ASGD) \cite{bach2013non};
\end{itemize}
and finally with first-order online algorithms mimicking second-order ones:
\begin{itemize}
    \item the Adagrad algorithm \cite{duchi2011adaptive}, which uses adaptive step sizes using only first-order information,
    \item the averaged Adagrad algorithm, with standard weighting.
\end{itemize}
We illustrate their performance in three different learning tasks, the case of (i) linear, (ii) logistic, and (iii) softmax regressions, for simple and more complex structured input data.

{The SNA and WASNA algorithms are made explicit for each application in the appendices. Furthermore, we provide them in a formal way with their associated theoretical guarantees.  }

\begin{table}[h!]
\begin{tabular}{lccccc}
\hline
Method             & Stepsize        & Averaging & Weights                                                        & \begin{tabular}[c]{@{}c@{}}Limit of the \\ conditioning\\ matrix\end{tabular} & \multicolumn{1}{l}{\begin{tabular}[c]{@{}l@{}}Asymptotic\\ \\ efficiency\end{tabular}} \\ \hline
SGD                & $n^{-\gamma}$   & No        & No                                                              & $I_{d}$                                                                       & No                                                                                     \\
ASGD               & $n^{-\gamma}$   & Yes       & $\frac{1}{n+1}$                                                & $I_{d}$                                                                       & Yes                                                                                    \\
Adagrad            & $n^{-\gamma}$   & No        & No                                                              & $\left( \text{Diag} (\Sigma ) \right)^{-1/2}$                                 & No                                                                                     \\
Adagrad (averaged) & $n^{-\gamma}$   & Yes       & $\frac{1}{n+1}$                                                & $\left( \text{Diag} (\Sigma ) \right)^{-1/2}$                                 & Yes                                                                                    \\
SNA                & $\frac{1}{n+1}$ & No        & No                                                             & $H^{-1}$                                                                      & Yes                                                                                    \\
SNA                & $n^{-\gamma}$   & No        & No                                                             & $H^{-1}$                                                                      & No                                                                                     \\
WASNA (standard)   & $n^{-\gamma}$   & Yes       & $\frac{1}{n+1}$                                                & $H^{-1}$                                                                      & Yes                                                                                    \\
log-WASNA          & $n^{-\gamma}$   & Yes       & $\frac{\log(n+1)^{\omega}}{\sum_{k=0}^{n} \log(k+1)^{\omega}}$ & $H^{-1}$                                                                      & Yes                                                                                   
\end{tabular}
\caption{\label{tab::resume} Summary of stochastic algorithms and their related properties. Note that the conditioning matrix corresponds to the matrix involved in the product with the gradient at each step.}
\end{table}

\subsection{Least-square regression}
 Consider the least square regression model defined by
\[
\forall n \geq 1, \quad Y_{n} = X_{n}^{T}\theta + \epsilon_{n} 
\]
where $X_{n} \in \mathbb{R}^{d}$ is a random feature vector, $\theta \in \mathbb{R}^{d}$ is the parameter vector, $\epsilon_{n}$ is a zero-mean random variable independent from $X_{n}$, and $\left( X_{i} , Y_{i} , \epsilon_{i} \right)_{i\geq 1}$ are independent and identically distributed.
{
Then, the model parameter $\theta$ can be seen as the minimizer of the functional $G : \mathbb{R}^{d} \longrightarrow \mathbb{R}_{+}$ defined for all parameter $h \in \mathbb{R}^{p}$ by
\[
G(h) = \frac{1}{2}\mathbb{E}\left[ \left( Y - X^{T}h \right)^{2} \right] .
\]}
{Implementation details for both stochastic Newton algorithms proposed in this paper for this specific setting are discussed in Appendix \ref{subsec:LS}.} 
We now fix $d=10$, and we choose $$\theta = (-4,-3,2,1,0,1,2,3,4,5)^{T} \in \mathbb{R}^{10},$$ 
$X \sim \mathcal{N}\left( 0 , \text{diag}\left( \frac{i^{2}}{d^{2}}\right)_{i=1,\ldots , 10} \right)$ and $\epsilon \sim \mathcal{N}(0,1)$. Note that in such a case the Hessian associated to this model is equal to $\text{diag}\left( \frac{i^{2}}{d^{2}}\right)_{i=1,\ldots , 10}$, meaning that the largest eigenvalue is $100$ times larger than the smallest one. Therefore, considering stochastic gradient estimates leads to a step sequence which cannot be adapted to each direction. In Figure \ref{fig:ls_diagCov}, we monitor the quadratic mean error of the different estimates, for three different type of initializations.  
One can see that both averaged Newton methods and the stochastic Newton method with step size of the order $1/n$ outperform all the other algorithms, specially for far intializations (right). The faster convergence of Newton methods or of the Adagrad algorithm compared to the one of standard SGD can be explained by their ability to manage the diagonal structure of the Hessian matrix, with eigenvalues at different scales. 
\begin{figure}[h]
\centering\includegraphics[width=0.95\textwidth]{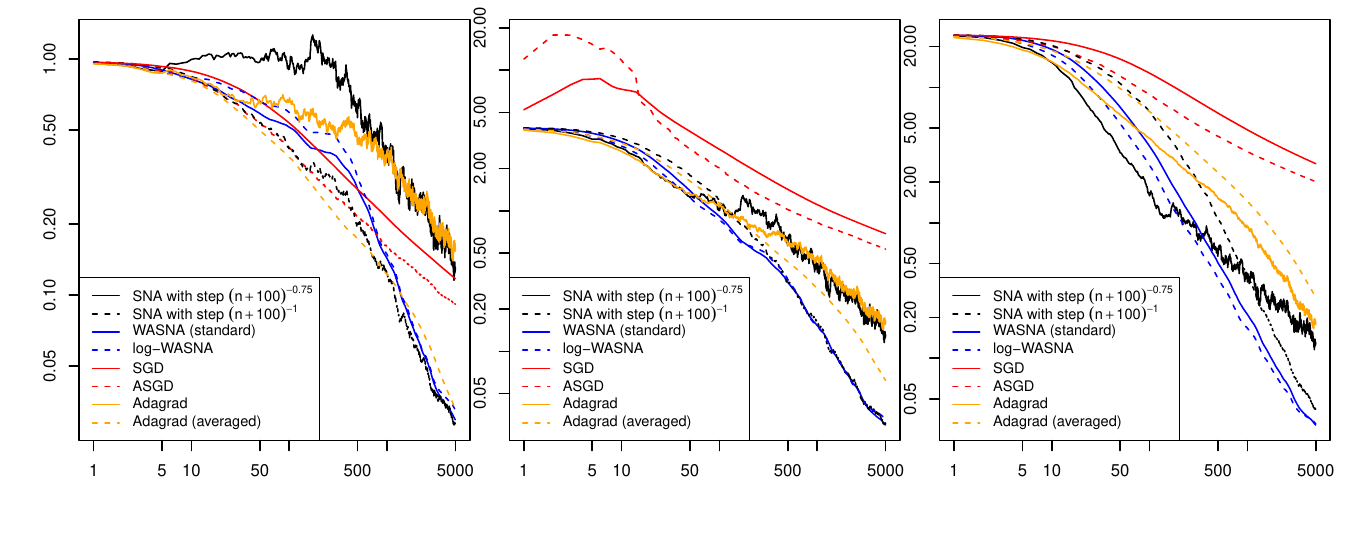}
\caption{\label{fig:ls_diagCov} 
(Linear regression with uncorrelated variables) Mean-squared error of the distance to the optimum $\theta$ with respect to the sample size for different initializations: $\theta_{0} = \theta + r_{0}U$, where $U$ is a uniform random variable on the unit sphere of $\mathbb{R}^{d}$ with  $r_{0} = 1$ (left), $r_0=2$ (middle) or $r_0=5$ (right). Each curve is obtained by an average over $50$ different samples (drawing a different initial point each time).
}
\end{figure}
\begin{figure}[h]
\centering\includegraphics[width=0.99\textwidth]{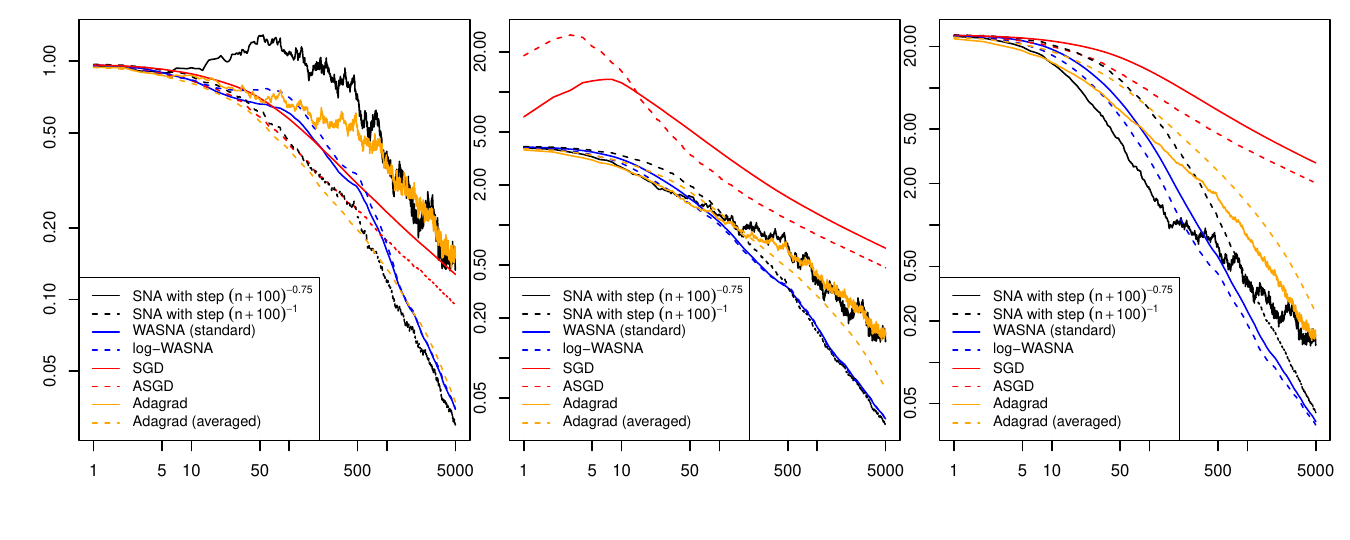}
\caption{\label{fig:ls_fullCov}
(Linear regression with correlated variables) Mean-squared error of the distance to the optimum $\theta$ with respect to the sample size for different initializations: $\theta_{0} = \theta + r_{0}U$, where $U$ is a uniform random variable on the unit sphere of $\mathbb{R}^{d}$, with  $r_{0} = 1$ (left), $r_0=2$ (middle) or $r_0=5$ (right). Each curve is obtained by an average over $50$ different samples (drawing a different initial point each time).}
\end{figure}

Consider now a more complex covariance structure of the data, such as follows 
$$X \sim \mathcal{N}\left( 0 , A \text{diag}\left( \frac{i^{2}}{d^{2}} \right)_{i=1 , \ldots , d} A^{T} \right)$$ 
where $A$ is a random orthogonal matrix. This particular choice of the covariates distribution, by the action of $A$, allows to consider strong correlations between the coordinates of $X$.
In Figure \ref{fig:ls_fullCov}, one can notice that the choice of adaptive step size used in the Adagrad algorithm is no longer sufficient to give the best convergence result in the presence of highly correlated data. In such a case, both averaged Newton algorithms remarkably perform, showing their ability to handle complex second-order structure of the data, and all the more so for bad initializations (right).

\subsection{Logistic regression} 
We turn out to be a binary classification problem: the logistic regression model defined by
\[
\forall n \geq 1 , \quad Y_{n} | X_{n}  \sim \mathcal{B}\left( \pi \left( \theta^{T}X \right) \right) 
\] 
where $X_{1} , \ldots ,X_{n} , \ldots$ are independent and identically distributed random vectors lying in $\mathbb{R}^{d}$ and for all $x \in \mathbb{R}$, 
\[
\pi (x) = \frac{\exp (x)}{1+ \exp (x)} .
\]
{
Then, the model parameter $\theta$ can be seen as the minimizer of the functional $G : \mathbb{R}^{d} \longrightarrow \mathbb{R}_{+}$ defined for all parameter $h \in \mathbb{R}^{p}$ by
\[
G(h) = \mathbb{E}\left[ \log\left( 1 + \exp \left(  X^\top h\right)\right) - X^{\top} h Y \right] .
\]} 
Due to the intrinsic non-linear feature of this model, this is not clear how the covariance structure of the covariates may affect the training phase, clearly departing from the linear regression setting. {The details about the implementation of the stochastic Newton algorithms in this specific framework, as well as their associated theoretical results are given in Appendix \ref{subsec:logisticRegression}.}
The first logistic regression setting that we consider is inspired from \cite{BGBP2019} and defined by
the model parameter $\theta = (9,0,3,9,4,9, 15, 0,7, 1, 0)^{T} \in \mathbb{R}^{11}$, with an intercept and standard Gaussian input variables, i.e.\  $X = \left(1 , \Phi^{T} \right)^{T}$ with $\Phi \sim \mathcal{N}\left( 0,I_{10} \right)$. 
In Figure \ref{fig:logistic_regression1}, we display the evolution of the {quadratic mean error} of the different estimates, for three different initializations. 
The Newton methods converge faster, in terms of distance to the optimum, than online gradient descents, which can be again explained by the Hessian structure of the model: even if the features are standard Gaussian random variables, the non-linearity introduced by the logistic model leads to a covariance structure difficult to apprehend theoretically and numerically by first-order online algorithms. 
In case of bad initialization (right), the best performances are given by the averaged Adagrad algorithm, the stochastic Newton algorithm with steps in $O(1/n^{3/4})$, closely followed by averaged stochastic Newton algorithms.
For the latter, this optimal asymptotic behaviour is enabled in particular by the use of weights giving more importance to the last estimates (being compared to the "standard" averaging Newton algorithm).
One can see that in such an example the step choice for the non-averaged Newton algorithm is crucial: choosing a step sequence of the form $1/n$ as in \cite{BGBP2019} significantly slows down the optimization dynamics, whereas a step choice in $O(1/n^{3/4})$ allows to reach the optimum much more quickly.

\begin{figure}[h]
\centering\includegraphics[width=0.99\textwidth]{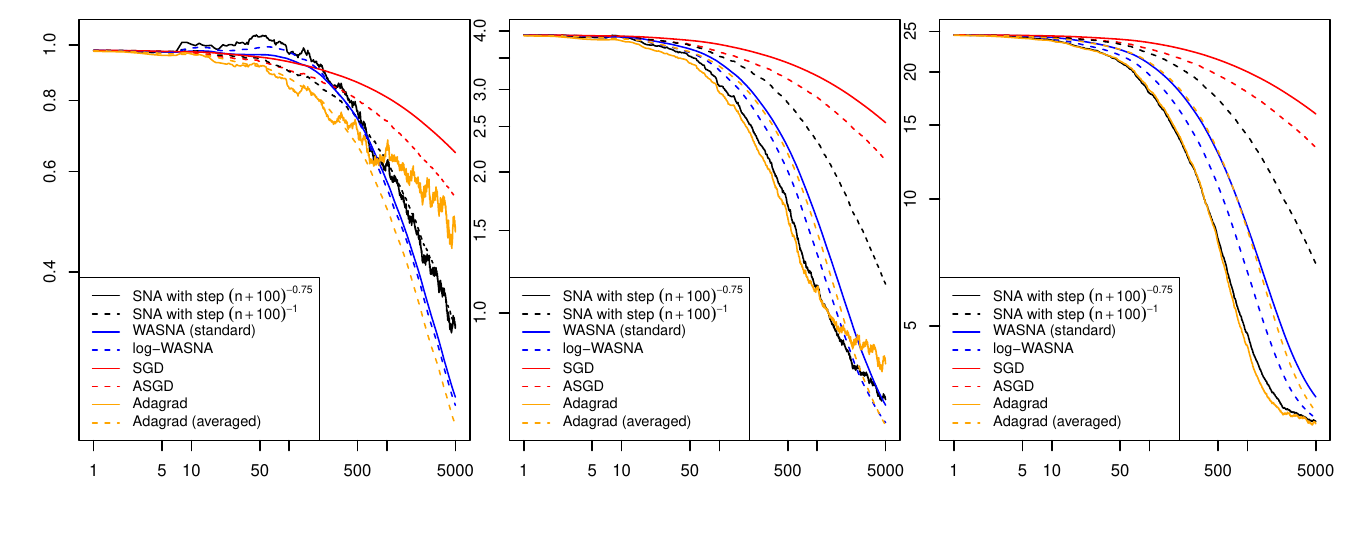}
\caption{\label{fig:logistic_regression1}(Logistic regression with standard Gaussian variables) Mean-squared error of the distance to the optimum $\theta$ with respect to the sample size for different initializations: $\theta_{0} = \theta + r_{0}U$, where $U$ is a uniform random variable on the unit sphere of $\mathbb{R}^{d}$, with  $r_{0} = 1$ (left), $r_0=2$ (middle) or $r_0=5$ (right). Each curve is obtained by an average over $50$ different samples (drawing a different initial point each time).}
\end{figure}



Let us now consider a second example, consisting in choosing $\theta \in \mathbb{R}^{d}$ with all components equal to $1$, and $X \sim \mathcal{N}\left( 0 , A \text{diag}\left( \frac{i^{2}}{d^{2}} \right)_{i=1 , \ldots , d} A^{T} \right)$ where $A$ is a random orthogonal matrix. The results are displayed in Figure \ref{figlogreg3}.
In the presence of such highly structured data, the averaged stochastic Newton algorithms are shown to perform greatly, even more for initializations far from the optimum (middle and right). 
One could note that in all initial configurations (left to right), the stochastic Newton algorithm with step size in $O(1/n)$ proves to be a worthy competitor, whereas the Adagrad algorithm becomes less and less relevant as the starting point moves away from the solution.  

\begin{figure}[h]
\centering\includegraphics[width=0.99\textwidth]{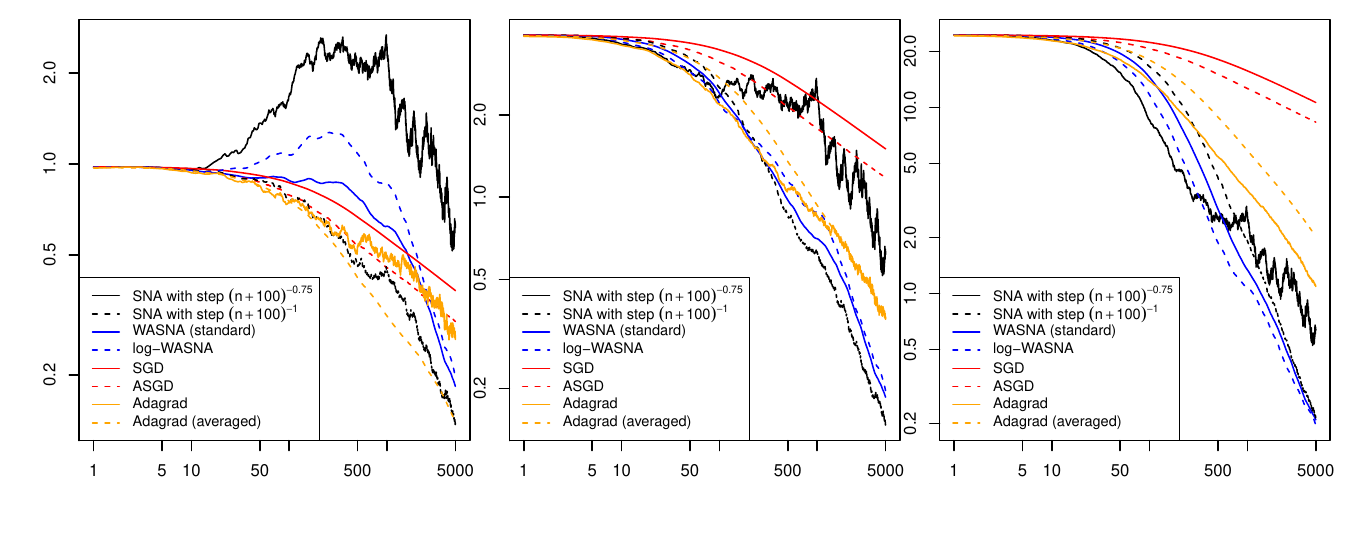}
\caption{\label{figlogreg3}
(Logistic regression with correlated Gaussian variables) Mean-squared error of the distance to the optimum $\theta$ with respect to the sample size for different initializations: $\theta_{0} = \theta + r_{0}U$, where $U$ is a uniform random variable on the unit sphere of $\mathbb{R}^{d}$, with  $r_{0} = 1$ (left), $r_0=2$ (middle) or $r_0=5$ (right). Each curve is obtained by an average over $50$ different samples (drawing a different initial point each time).}
\end{figure}


\subsection{Softmax regression}\label{secsoft}

In this section, we focus on softmax regression, which consists of a multilabel classification case.
Assume that the number of different classes is $K$, and that the model parameters are  $\theta_{1} \in \mathbb{R}^{p} , \ldots , \theta_{K} \in \mathbb{R}^p$. Consider the samples $\left( X_{1} , Y_{1} \right) ,$ 
$\ldots , \left( X_{n} , Y_{n} \right), \ldots$ being i.i.d.\ random vectors in $\mathbb{R}^d \times [1,\hdots , K]$ with for all $n \geq 1$ and $k = 1 , \ldots , K$,
\[
\mathbb{P}\left[ Y_{n} = k | X_{n} \right] = \frac{e^{\theta_{k}^{T}X_{n}}}{\sum_{k' = 1}^{K} e^{\theta_{k'}^{T}X_{n}}}.
\]
 {
In this framework, the model parameter $(\theta_1, \hdots \theta_K)$ can be seen as the minimizer of the functional $G : \mathbb{R}^{d} \times \ldots \times \mathbb{R}^{d} \longrightarrow \mathbb{R}$ defined for all $h=(h_1,\hdots , h_K)\in \mathbb{R}^d\times \hdots \times \mathbb{R}^d$ by
\[
G(h) = -\mathbb{E} \left[ \log \left( \frac{e^{h_{Y}^{T}X}}{\sum_{k' = 1}^{K} e^{h_{k'}^{T}X}}  \right) \right],
\]
where $\left( X,Y \right)$ is a i.i.d.\ copy of  $\left( X_{1} , Y_{1} \right) $.}
 {The stochastic Newton algorithms for the softmax regression are detailed in Appendix \ref{secsoft} as well as their paired theoretical results.}
We have performed numerical experiments by running the SNA and the WASNA with different tuning, as well as the previous considered benchmark on simulated data. As the conclusions are similar to the ones obtained in the case of the logistic regression, we discuss them in Appendix \ref{app:softmax_simu}.

We focus instead on the  MNIST\footnote{\url{http://yann.lecun.com/exdb/mnist/}} real dataset, in order to illustrate the performance of the WASNA in a context of multi-label classification. 
It consists in $70000$ pictures of $28\times 28$ pixels representing handwritten digits recast into vectors of dimension $784$. 
The goal is to predict the digit $Y\in \{0, \hdots, 9\}$ represented on each vectorized image $X \in \mathbb{R}^{784}$, 
where each coordinate gives the contrast (between $0$ and $255$) of each pixel. 
This is then a multi-label classification setting with 10 different classes.
In a preprocessing step, we normalize the features between 0 and 1 before applying the softmax regression. More formally, the model can be defined for any $k \in \{0 , \ldots 9\}$ by
\[
\mathbb{P}\left[ Y = k | X \right] = \frac{e^{\theta_{k}^{T}X}}{\sum_{k=0}^{9}e^{\theta_{k}^{T}X}}
\]
with the parameters $\theta= \left( \theta_{0}^{T} , \ldots, \theta_{9}^{T} \right)^{T}$ and the normalized features $X \in [0,1]^{784}$. 
Despite the simplicity of this model which is not really adapted to imaging data, the obtained performances are noteworthy, even when applied directly on the raw pixels data. 
The dataset is randomly split into a training set of size $60000$ and a test set of size $10000$ and the WASNA is run with \emph{default} parameters, i.e.\ $\gamma = 0.75$, $c_{\gamma} = 1$, $c_\gamma ' = 0$ and $\omega =2$ on the training set. The constructed estimates of the parameter $\theta$ are then used to "read" the digit displayed in pictures of the test set, leading to an overall performance of $88 \%$ accurate predictions. For completeness, and to understand which digits are mistaken, we provide the resulting confusion matrix in Figure \ref{fig:confmatrix}.
\begin{figure}
    \centering
    \includegraphics[height=8cm]{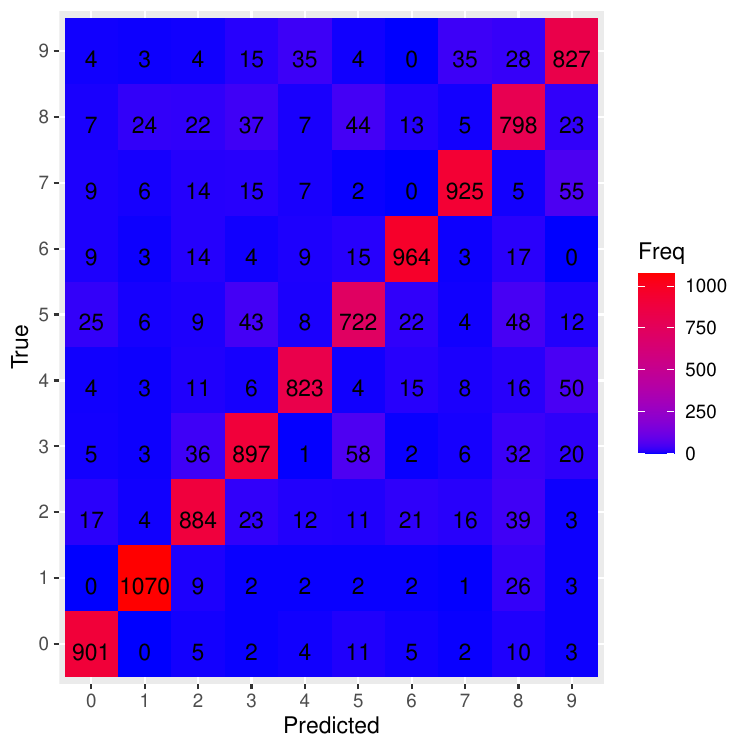}
    \caption{(Softmax regression on the MNIST dataset) Confusion matrix for the predictions given by the default WASNA on a test set of size 10000. }
    \label{fig:confmatrix}
\end{figure}
Remark that Averaged Stochastic gradient algorithms and the Adagrad one leads to analogous (or slightly better) results. The comparison in terms of accuracy may not be totally fair as the hyperparameters of the WASNA have not been optimized at all but chosen as default values.  
This numerical experiment on the MNIST real dataset proves the proposed WASNA to be a second order online method able to tackle large-scale data. 
And if the number of hyperparameters can be a legitimate delicate point raised by some readers, it should be noted that a default choice however already leads to very good results on a difficult problem such as the classification of images into 10 classes. 



\section{Conclusion}
In this paper, we have provided a unified framework for deriving stochastic Newton algorithms and their new averaged versions, paving the way for genuine second-order online algorithms for machine learning.
Under mild assumptions, we have established convergence results, such as almost sure convergence rates and asymptotic efficiency of the constructed estimates.
  {The WASNA allows in particular non-uniform averaging techniques, to give more weights to the last iterates. If the usual and logarithmic weightings are comparable in terms of theoretical guarantees (i.e.\ asymptotic efficiency), their performance can differ on applications. 
  On top of that, both versions require the calibration of several additional hyperparameters, which can be seen as a limitation for the implementation. 
  One can, however, note that in the numerical experiments, the arbitrary use of logarithmic weights (with $\omega=2$ gives the best practical results in most of the cases, providing more stability and less sensitivity to bad initializations. }
  
 {Furthermore, to be totally fair, the comparisons with other online methods have been done only regarding the sample size and not considering the computation time (which is of course more expensive for Newton's method).}
The Riccatti's trick to directly update the Hessian inverse estimates is also very beneficial in practice to reduce the iteration cost usually attributed to second-order methods. 
The next steps to explore are: (i) finding a surrogate trick for the Ricatti's formula in order to extend the algorithms to more general loss functions; (ii) refining Newton algorithms 
to optimize the storage needed by such algorithms, that could become a lock for  very high-dimensional data.

\subsection*{Current Funding Sources}
There are currently no Funding Sources.

\subsection*{Data availability statements}
All the codes are available there: \url{http://godichon.perso.math.cnrs.fr/codes_newton.zip}.  MNIST data are available at this link \url{http://yann.lecun.com/exdb/mnist/}.

\subsection*{Conflict of interest}
The authors declare that they have no known competing financial interests or personal relationships that
could have appeared to influence the work reported in this paper.

\bibliographystyle{plain}

\bibliography{biblio_redaction_2}

\appendix
\section{Applications: algorithms and theoretical results}\label{sec::app::theoretical}

\subsection{Least-square regression}
\label{subsec:LS}
Let us recall the least squares regression model defined by
\[
\forall n \geq 1, \quad Y_{n} = X_{n}^{T}\theta + \epsilon_{n} 
\]
where $X_{n} \in \mathbb{R}^{d}$ is a random features vector, $\theta \in \mathbb{R}^{d}$ is the parameters vector, and $\epsilon_{n}$ is a zero-mean random variable independent from $X_{n}$, and $\left( X_{i} , Y_{i} , \epsilon_{i} \right)_{i\geq 1}$ are independent and identically distributed. Then, $\theta$ can be seen as the minimizer of the functional $G : \mathbb{R}^{d} \longrightarrow \mathbb{R}_{+}$ defined for all parameter $h \in \mathbb{R}^{p}$ by
\[
G(h) = \frac{1}{2}\mathbb{E}\left[ \left( Y - X^{T}h \right)^{2} \right] .
\]
The Hessian of $G$ at $h$ is given by $\nabla^{2}G(h) = \mathbb{E}\left[ XX^{T} \right]$ and a natural estimate is
\[
\overline{H}_{n} = \frac{1}{n}\sum_{i=1}^{n} X_{i}X_{i}^{T}
\]
whose inverse can be easily updated thanks to the Riccati's formula leading to
\[
H_{n+1}^{-1} = H_{n}^{-1} - \left( 1+ X_{n+1}^{T}H_{n}^{-1}X_{n+1} \right)^{-1} H_{n}^{-1} X_{n+1}X_{n+1}^{T}H_{n}^{-1}
\]
where $H_{n}=(n+1)\overline{H}_{n}$.
 Then, assuming that $H=\mathbb{E}\left[ XX^{T} \right]$ is positive and that $X$ and $\epsilon$ admits 4-th order moments, entails that all assumptions of Section \ref{sec::framework} are verified, and by the law of the iterated logarithm, 
 \[
 \left\| \overline{H}_{n} - H \right\|^{2} = O \left( \frac{\ln \ln n}{n} \right) \quad a.s.
 \]
 Then the convergence results of the Stochastic Newton algorithm in Section \ref{sec:SN_convergence_results} hold. Furthermore, if there is $\eta > 0$ verifying $\eta >  \frac{1}{\gamma}-1$ and such that $X$ and $\epsilon$ admit moment of order $4 + 4\eta$, the convergence results of the averaged version in Section \ref{subsec::convwasn} hold.

\subsection{Logistic regression} \label{subsec:logisticRegression}

Let us recall that the logistic regression model is defined by
\[
\forall n \geq 1 , \quad Y_{n} | X_{n}  \sim \mathcal{B}\left( \pi \left( \theta^{T}X \right) \right) 
\] 
where $X_{1} , \ldots ,X_{n} , \ldots$ are independent and identically distributed random vectors lying in $\mathbb{R}^{d}$ and for all $x \in \mathbb{R}$, 
\[
\pi (x) = \frac{\exp (x)}{1+ \exp (x)} .
\]

\subsubsection{The WASNA} In the particular case of logistic regression, the weighted averaged version the stochastic Newton algorithm in Section \ref{subsec:ASN_algo} can be rewritten as:
\begin{equation}
\label{alg:WASNA_log}
\left\{
\begin{aligned}[2]
& \overline{a}_{n+1} = \pi \left( \theta_{n,\tau}^{T}X_{n+1} \right) \left( 1- \pi \left( \theta_{n,\tau}^{T}X_{n+1} \right) \right) \\
&  \tilde{\theta}_{n+1} = \tilde{\theta}_{n} + \gamma_{n+1} \overline{S}_{n}^{-1} X_{n+1} \left( Y_{n+1} - \pi \left( \tilde{\theta}_{n}^{T}X_{n+1} \right) \right) \\
& \theta_{n+1, \tau} = \left( 1- \tau_{n+1} \right) \theta_{n,\tau} + \tau_{n+1} \tilde{\theta}_{n+1} \\
& S_{n+1}^{-1} = S_{n}^{-1} - \left( 1+ a_{n+1}X_{n+1}^{T}S_{n}^{-1}X_{n+1} \right)^{-1} a_{n+1} S_{n}^{-1}X_{n+1}X_{n+1}^{T}S_{n}^{-1}
\end{aligned}
\right.
\end{equation}

with $\tilde{\theta}_{0} = \theta_{0 , \tau}$ bounded, $\overline{S}_{n}^{-1} = (n+1) S_{n}^{-1}$ and $S_{0}^{-1}$ symmetric and positive, $\gamma_{n} = c_{\gamma}n^{-\gamma}$ with $c_{\gamma} > 0$ and $\gamma \in (1/2,1)$, $\tau_{n} = \frac{\ln (n+1)^{\omega}}{\sum_{k=0}^{n}\ln(k+1)^{\omega}}$, with $\omega \geq 0$, 
\[
\overline{a}_{n+1} = \pi \left( X_{n+1}^{T} \theta_{n,\tau} \right) \left( 1-  \pi \left( X_{n+1}^{T} \theta_{n,\tau} \right) \right)
\]
and
\[ a_{n+1} = \max \left\lbrace \overline{a}_{n+1} , \frac{c_{\beta}}{(n+1)^{\beta}} \right\rbrace ,
\]
with $c_{\beta}> 0$ and $\beta \in (\gamma - 1/2)$.
Note that this choice of truncation gives Assumption  {\ref{(H1)}} for free, since 
$$
n^{\beta-1} S_n \succeq n^{\beta-1} \left( \lambda S_0 + \sum_{k=1}^n \frac{c_\beta}{k^\beta} X_k X_k^T \right) \xrightarrow[n\to \infty]{a.s.\ } \frac{c_\beta}{1-\beta}\mathbb{E} \left[ XX^T \right].
$$

Choosing $\gamma=1$, $c_\gamma=1$ and $\tau_n=1$ at each iteration leads to the Newton algorithm of Section \ref{sec:SN_algo} for the logistic regression, which matches the specific truncated Newton algorithm developed in \cite{BGBP2019}.

We study the convergence rates associated to this instance \eqref{alg:WASNA_log} of the WASNA in the following theorem.

\begin{theo}
\label{theo:ASNlog}
Suppose that $X$ admits a 4-th order moment and that 
$$\mathbb{E}\left[ \pi \left( \theta^{T}X \right) \left( 1- \pi \left( \theta^{T}X \right) \right) XX^{T} \right] \succ 0$$ 
is a positive matrix. The iterates given by the WASNA defined in \eqref{alg:WASNA_log} verify 
\[
\left\| \tilde{\theta}_{n} - \theta \right\|^{2 } = O \left( \frac{\ln n}{n^{\gamma}} \right) \quad a.s. \quad \quad \text{and} \quad \quad \left\| \theta_{n, \tau} - \theta \right\|^{2} = O \left( \frac{\ln n}{n} \right) \quad a.s.
\]
Furthermore,
\[
\sqrt{n} \left( \theta_{n, \tau} - \theta \right) \xrightarrow[n\to + \infty]{\mathcal{L}} \mathcal{N}\left( 0 , H^{-1} \right) .
\]
Finally,
\[
\left\| \overline{S}_{n} - H \right\|^{2} = O \left( \frac{1}{n^{2\beta}} \right) \quad a.s \quad \quad \text{and} \quad \quad \left\| \overline{S}_{n}^{-1} - H^{-1} \right\|^{2} = O \left( \frac{1}{n^{2\beta} } \right) \quad a.s.
\]
\end{theo}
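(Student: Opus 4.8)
The plan is to cast the logistic instance \eqref{alg:WASNA_log} into the abstract framework of Sections \ref{sec::framework}--\ref{sec:ASN_algo}, so that the three convergence statements reduce to verifying the standing assumptions and then quoting Theorems \ref{theothetanbar}, \ref{theothetatilde} and \ref{jenaipleinle}. First I would record the logistic quantities: with $g$ the negative log-likelihood one has $\nabla_{h} g(X,Y,h) = X\left( \pi(h^{T}X) - Y \right)$ and $\nabla_{h}^{2} g = \pi(h^{T}X)\left( 1-\pi(h^{T}X) \right) XX^{T}$, so that $H = \mathbb{E}\left[ \pi(\theta^{T}X)(1-\pi(\theta^{T}X)) XX^{T} \right]$. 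The crucial simplification, peculiar to this well-specified model, is that $\mathrm{Var}(Y\mid X) = \pi(\theta^{T}X)(1-\pi(\theta^{T}X))$, whence $\Sigma = \mathbb{E}\left[ (Y-\pi(\theta^{T}X))^{2} XX^{T} \right] = H$; this identity is what collapses the generic limiting variance $H^{-1}\Sigma H^{-1}$ to $H^{-1}$ in the announced central limit theorem.

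Next I would verify the assumptions. Assumption \ref{(A1a)} holds since $\mathbb{E}[Y\mid X]=\pi(\theta^{T}X)$ forces $\nabla G(\theta)=0$; \ref{(A1b)} (with $C'=0$) and the moment condition \eqref{momenteta} follow immediately from $\|\nabla_{h} g\| \le \|X\|$ and the assumed fourth moment of $X$ (which accommodates some $\eta \in (1/\gamma - 1, 1]$, a nonempty range because $\gamma > 1/2$); \ref{(A1c)} and \ref{(A2a)} use $\pi(1-\pi)\le 1/4$ with dominated convergence; \ref{(A2b)} is the positivity hypothesis on $H$; and \ref{(A2c)} follows from the boundedness of the derivative of $x\mapsto \pi(x)(1-\pi(x))$ together with the fourth moment of $X$. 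The truncation $a_{n+1}=\max\{\overline a_{n+1}, c_{\beta}/(n+1)^{\beta}\}$ is exactly what secures \ref{(H1')}: the bound $\lambda_{\max}(\overline S_{n})=O(1)$ comes from $a_{k}\le \max\{1/4,c_{\beta}\}$ and the SLLN on $\tfrac1n\sum X_{k}X_{k}^{T}$, while $a_{k}\ge c_{\beta}/k^{\beta}$ gives $S_{n}\succeq c_{\beta}\sum_{k\le n}k^{-\beta}X_{k}X_{k}^{T}$, and a weighted law of large numbers on $n^{-(1-\beta)}\sum_{k\le n}k^{-\beta}X_{k}X_{k}^{T}$ (positive definite because $H\succ 0$ entails $\mathbb{E}[XX^{T}]\succ 0$) yields $\lambda_{\min}(\overline S_{n})=\Omega(n^{-\beta})$, i.e.\ $\lambda_{\max}(\overline S_{n}^{-1})=O(n^{\beta})$.

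Granting the consistency $\theta_{n,\tau}\to\theta$ from Theorem \ref{theothetanbar}, Assumption \ref{(H2a')} is obtained by checking $\overline S_{n}\to H$: once $\theta_{k-1,\tau}\to\theta$ the truncation is eventually inactive on each sample and $\overline a_{k}\to \pi(\theta^{T}X_{k})(1-\pi(\theta^{T}X_{k}))$, so a dominated/ergodic argument gives $\tfrac1n\sum_{k} a_{k}X_{k}X_{k}^{T}\to H$. Theorem \ref{theothetatilde} then delivers $\|\tilde\theta_{n}-\theta\|^{2}=O(\ln n/n^{\gamma})$. For the averaged iterate I would apply Theorem \ref{jenaipleinle} with $\overline u_{k}=\overline a_{k}$, $\overline\Phi_{k}=X_{k}$, the deterministic perturbation $c_{\beta}k^{-\beta}$ playing the stabilizing role of the Gaussian term: condition \eqref{majinthess} follows from $\|\overline a_{k}X_{k}X_{k}^{T}\|\le \tfrac14\|X_{k}\|^{2}$, and \eqref{sumfini} from $\mathbb{E}[\|\overline a_{k}X_{k}X_{k}^{T}\|^{2}\mid\mathcal{F}_{k-1}]\le \tfrac1{16}\mathbb{E}[\|X\|^{4}]<\infty$ together with $\tau_{k}\sim 1/k$ and $\gamma_{k}\sim k^{-\gamma}$, which makes the general term of order $k^{2\alpha+\gamma-4}(\ln k)^{1+\delta}$ summable for every $\alpha\in(1/2,(3-\gamma)/2)$. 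This yields $\|\theta_{n,\tau}-\theta\|^{2}=O(\ln n/n)$ and $\sqrt n(\theta_{n,\tau}-\theta)\to\mathcal{N}(0,\tfrac{\tau^{2}}{2\tau+\nu}H^{-1}\Sigma H^{-1})$; substituting $\Sigma=H$ and the logarithmic-weight constant $\tfrac{\tau^{2}}{2\tau+\nu}=1$ (as in the remark following Theorem \ref{theomoy}) produces the stated $\mathcal{N}(0,H^{-1})$.

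It remains to establish the rate of $\overline S_{n}$, which I expect to be the main obstacle. I would decompose $\overline S_{n}-H$ into three pieces and track the slowest: (i) the truncation bias $\tfrac1{n+1}\sum_{k}\max\{0,\,c_{\beta}k^{-\beta}-\overline a_{k}\}X_{k}X_{k}^{T}$, bounded crudely by $\tfrac{c_{\beta}}{n+1}\sum_{k}k^{-\beta}\|X_{k}\|^{2}=O(n^{-\beta})$; (ii) the plug-in error from evaluating $\overline a_{k}$ at $\theta_{k-1,\tau}$ rather than $\theta$, controlled via the Lipschitz continuity of $x\mapsto\pi(x)(1-\pi(x))$ and the rate $\|\theta_{k,\tau}-\theta\|^{2}=O(\ln k/k)$ just obtained, giving $O(\sqrt{\ln n/n})$; and (iii) the martingale fluctuation of $\tfrac1n\sum_{k}(\pi(\theta^{T}X_{k})(1-\pi(\theta^{T}X_{k}))X_{k}X_{k}^{T}-H)$, of order $O(\sqrt{\ln\ln n/n})$ by the law of the iterated logarithm. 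Since $\beta<\gamma-1/2<1/2$, term (i) dominates (ii) and (iii), so $\|\overline S_{n}-H\|^{2}=O(n^{-2\beta})$. The inverse rate then follows from $\overline S_{n}^{-1}-H^{-1}=\overline S_{n}^{-1}(H-\overline S_{n})H^{-1}$: because $\overline S_{n}\to H\succ 0$ the smallest eigenvalue of $\overline S_{n}$ is eventually bounded away from zero, hence $\|\overline S_{n}^{-1}\|_{op}=O(1)$ a.s., and multiplying by $\|\overline S_{n}-H\|=O(n^{-\beta})$ gives $\|\overline S_{n}^{-1}-H^{-1}\|^{2}=O(n^{-2\beta})$. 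Isolating the truncation bias as the genuinely dominant term, and absorbing the fact that $a_{k}$ is conditioned on the past estimates, is where the real technical effort lies.
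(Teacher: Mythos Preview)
Your proposal is correct and follows essentially the same route as the paper's proof: verify \ref{(A1)}, \ref{(A2)}, \ref{(H1')}, \ref{(H2a')} and the moment bound \eqref{momenteta} to invoke Theorems \ref{theothetanbar} and \ref{theothetatilde}, then check \eqref{majinthess}--\eqref{sumfini} to apply Theorem \ref{jenaipleinle} (using $\Sigma=H$ and the logarithmic weights to reduce the limiting variance to $H^{-1}$), and finally obtain the rate on $\overline S_{n}$ by the same three-term decomposition (truncation bias $O(n^{-\beta})$, plug-in/Lipschitz term, martingale term), with the truncation piece dominating since $\beta<1/2$. The only cosmetic discrepancy is that the paper fits Theorem \ref{jenaipleinle} by taking $\overline u_{k}=a_{k}$ (the truncated weight) and $c_{\beta}=0$, rather than your $\overline u_{k}=\overline a_{k}$ with the floor ``playing the Gaussian role''; either way the bounds $\|a_{k}X_{k}X_{k}^{T}\|\le \max\{1/4,c_{\beta}\}\|X_{k}\|^{2}$ make \eqref{majinthess} and \eqref{sumfini} immediate.
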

The proof is given in Appendix \ref{app:proof_ASNlog}, and consists in verifying Assumptions \ref{(A1)}, \ref{(A2a)}, \ref{(A2b)}  {and \ref{(H1)} as well as the consistency of $\overline{S}_{n}$} to get the convergence rate of $\left\| \tilde{\theta}_{n} - \theta \right\|^{2 }$, then in verifying Assumptions \textbf{(A2c)} with Inequalities \eqref{majinthess} and \eqref{sumfini} to get the convergence rate of $\left\| \theta_{n, \tau} - \theta \right\|^{2}$ and the central limit theorem. Remark that in the context of the WASNA, we get the rates of convergence as for the direct truncated stochastic Newton algorithm  \cite{BGBP2019}, without additional assumptions.


\subsection{Softmax regression}\label{secsoft}

Let us recall that assuming that the number of different classes is $K$, and that the model parameters are  $\theta_{1} \in \mathbb{R}^{p} , \ldots , \theta_{K} \in \mathbb{R}^p$. Consider the samples $\left( X_{1} , Y_{1} \right) ,$ 
$\ldots , \left( X_{n} , Y_{n} \right), \ldots$ being i.i.d.\ random vectors in $\mathbb{R}^d \times [1,\hdots , K]$ with for all $n \geq 1$ and $k = 1 , \ldots , K$,
\[
\mathbb{P}\left[ Y_{n} = k | X_{n} \right] = \frac{e^{\theta_{k}^{T}X_{n}}}{\sum_{k' = 1}^{K} e^{\theta_{k'}^{T}X_{n}}}.
\]
Then, the likelihood can be written as
\[
L_{n}\left( \theta_{1} , \ldots , \theta_{K} \right) = \prod_{i=1}^{n} \frac{\sum_{k'=1}^{K}e^{\theta_{k'}^{T}X_{i}}\mathbf{1}_{Y_{i} = k}}{\sum_{k' = 1}^{K} e^{\theta_{k'}^{T}X_{n}}} = \prod_{i=1}^{n} \frac{e^{\theta_{Y_{i}}^{T}X_{i}}}{\sum_{k' = 1}^{K} e^{\theta_{k'}^{T}X_{i}}}  ,
\]
which leads to the following log-likelihood
\[
\ell_{n} \left( \theta_{1} , \ldots , \theta_{K} \right) = \sum_{i=1}^{n} \log \left( \frac{e^{\theta_{Y_{i}}^{T}X_{i}}}{\sum_{k' = 1}^{K} e^{\theta_{k'}^{T}X_{i}}}  \right) .
\]
Then, considering the asymptotic objective function, the aim is to minimize the convex function $G : \mathbb{R}^{d} \times \ldots \times \mathbb{R}^{d} \longrightarrow \mathbb{R}$ defined for all $h$ by
\[
G(h) = -\mathbb{E} \left[ \log \left( \frac{e^{h_{Y}^{T}X}}{\sum_{k' = 1}^{K} e^{h_{k'}^{T}X}}  \right) \right] =: \mathbb{E}\left[ g(X,Y,h) \right],
\]
where $\left( X,Y \right)$ is a i.i.d.\ copy of  $\left( X_{1} , Y_{1} \right) $. In order to establish convergence rate for the weighted averaged Newton algorithm.
If $X$ admits a second order moment, the functional $G$ is twice differentiable and for all $h = \left( h_{1} , \ldots , h_{K}\right) \in \mathbb{R}^{d} \times \ldots \times \mathbb{R}^{d}$,
\[
\nabla G (h) =  \mathbb{E}\left[ \left( 
X^{T} \left( \frac{e^{h_{1}^{T}X}}{\sum_{k= 1 }^{K} e^{h_{k}^{T}X}} - \mathbf{1}_{Y = 1} \right)  , \ldots ,
, X^{T} \left( \frac{e^{h_{K}^{T}X}}{\sum_{k= 1 }^{K} e^{h_{k}^{T}X}} - \mathbf{1}_{Y = K} \right) \right)^{T}
 \right] = \mathbf{E}\left[\nabla g \left( X , h \right) \right] .
\]
and one can check that $\nabla G (\theta)= 0$.
Furthermore, computing second-order derivatives leads to the Hessian, defined for all $h$ by
\[
\nabla^{2}G(h) = \mathbb{E}\left[\left( \text{diag}  \left( \sigma(X,h) \right) - \sigma(X,h)\sigma(X,h)^{T} \right) \otimes XX^{T} \right]
\]
where $
\sigma (X,h) = \left( 
 \frac{e^{h_{1}^{T}X}}{\sum_{k= 1 }^{K} e^{h_{k}^{T}X}} ,\ldots ,
\frac{e^{h_{K}^{T}X}}{\sum_{k= 1 }^{K} e^{h_{k}^{T}X}}
\right)^{T}$, and $\otimes$ denotes the Kronecker product. In what follows, that suppose that $\nabla^{2}G(\theta)$ is positive.

\subsubsection{The SNA}
In the case of softmax regression, the stochastic Newton algorithm can be defined for all $n \geq 1$ by
\begin{equation}
\label{alg:SNA_soft}
\left\{
\begin{aligned}[1]
& \Phi_{n+1} = \nabla_{h} g \left( X_{n+1},Y_{n+1} , \theta_{n} \right) \\
& \theta_{n+1} = \theta_{n} - \frac{1}{n+1} \overline{H}_{n}^{-1} \nabla_{h} g \left( X_{n+1} , Y_{n+1} , \theta_{n} \right) \\ 
& H_{n+\frac{1}{2}}^{-1} = H_{n}^{-1} - \left( 1+ \beta_{n+1}Z_{n+1}^{T} H_{n}^{-1} Z_{n+1} \right)^{-1} \beta_{n+1}H_{n}^{-1} Z_{n+1}Z_{n+1}^{T} H_{n}^{-1} \\
& H_{n+1} = H_{n+\frac{1}{2}}^{-1} - \left( 1+ \Phi_{n+1}^{T}H_{n+\frac{1}{2}}^{-1}\Phi_{n+1} \right)^{-1} H_{n+\frac{1}{2}}^{-1} \Phi_{n+1}\Phi_{n+1}^{T}H_{n+\frac{1}{2}}^{-1},
\end{aligned}
\right.
\end{equation}
where $\theta_{0}$ is bounded, $\overline{H}_{n} = (n+1)H_{n}^{-1}$, $H_{0}^{-1}$ is symmetric and positive, $\beta_{n} = c_{\beta}n^{-\beta}$, with $c_{\beta}> 0$ and $\beta\in (0,1/2)$, and $Z_{1} , \ldots , Z_{n+1}$ are i.i.d.\ with $Z_{1} \sim \mathcal{N}\left( 0 , I_{d} \right)$. 
To our knowledge, this is the first time that a stochastic Newton algorithm is made explicit for the softmax regression problem. 
The associated convergence guarantees follow.
\begin{theo}
\label{theo:ASNsoft}
 {Assume that $X$ admits a moment of order $4$ and that the Hessian $\nabla^{2} G(\theta)$ is positive,}
the iterates of the stochastic Newton algorithm defined by \eqref{alg:SNA_soft} satisfy 
\[
\left\| \theta_{n} - \theta \right\|^2 = O \left( \frac{\ln n}{n} \right) \quad a.s
\]
and
\[
\sqrt{n} \left( \theta_{n} - \theta \right)  \xrightarrow[n\to + \infty]{\mathcal{L}} \mathcal{N}\left( 0 , H^{-1} \right) .
\]
Furthermore,
\[
\left\| \overline{H}_{n} - H \right\|^{2} = O \left( \frac{1}{n^{2\beta}} \right) \quad a.s. \quad \quad \text{and} \quad \quad \left\| \overline{H}_{n}^{-1} - H^{-1} \right\|^{2} = O \left( \frac{1}{n^{2\beta}} \right) \quad a.s.
\]
\end{theo}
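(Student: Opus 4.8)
The plan is to realize the softmax SNA \eqref{alg:SNA_soft} as a particular instance of the general SNA \eqref{algo:sto_newton_iterate} (with $c_\theta=0$) and then simply to verify the hypotheses of Theorems \ref{snaas}, \ref{theoratetheta} and \ref{thetatlc}; the three stated conclusions will follow at once, together with the Hessian rates, as soon as Assumptions \ref{(A1)}, \ref{(A2)}, \ref{(H1)}, \ref{(H2a)} and \ref{(H2b)} are checked in this setting. First I would dispose of the regularity of $g$ and $G$. From the explicit gradient, each block equals $X(\sigma_k(X,h) - \mathbf{1}_{Y=k})$, so that $\| \nabla_{h} g(X,Y,h) \|^{2} = \|X\|^{2}\,\|\sigma(X,h)-e_{Y}\|^{2} \le 2\|X\|^{2}$; under \ref{(HS1a)} this yields \ref{(A1b)} with $C'=0$, and dominated convergence gives the continuity \ref{(A1c)}, while \ref{(A1a)} is the stationarity $\nabla G(\theta)=0$ already verified in the text. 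Since $\mathrm{diag}(\sigma)-\sigma\sigma^{T}$ has operator norm at most $1$, one has $\|\nabla^{2}G(h)\|_{op}\le \mathbb{E}[\|X\|^{2}]$, so \ref{(A2a)} holds under \ref{(HS1a)}; the Lipschitz continuity of $\sigma(X,\cdot)$ combined with the fourth-moment control of \ref{(HS1b)} (which bounds $\mathbb{E}[\|X\|^{3}]$) gives \ref{(A2c)}; and \ref{(A2b)} is exactly \ref{(HS2)}.

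The heart of the argument is the spectral control \ref{(H1)} and the Hessian rate \ref{(H2b)}, both driven by the artificial term $\beta_{k}Z_{k}Z_{k}^{T}$ injected through the first Riccati update in \eqref{alg:SNA_soft}. Writing $\overline{H}_{n} = \frac{1}{n+1}\bigl(H_{0} + \sum_{k=1}^{n}\Phi_{k}\Phi_{k}^{T} + \sum_{k=1}^{n}\beta_{k}Z_{k}Z_{k}^{T}\bigr)$ with $\Phi_{k}=\nabla_{h}g(X_{k},Y_{k},\theta_{k-1})$ and $\beta_{k}=c_{\beta}k^{-\beta}$, I would use $\mathbb{E}[Z_{k}Z_{k}^{T}]=I_{d}$ and $\sum_{k=1}^{n}k^{-\beta}\sim n^{1-\beta}/(1-\beta)$ to show that $\frac{1}{n+1}\sum_{k}\beta_{k}Z_{k}Z_{k}^{T}\to 0$ but stays bounded below by a multiple of $n^{-\beta}I_{d}$ almost surely. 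This forces $\lambda_{\min}(\overline{H}_{n})\gtrsim n^{-\beta}$, i.e.\ $\lambda_{\max}(\overline{H}_{n}^{-1})=O(n^{\beta})$, while a martingale law of large numbers for $\frac{1}{n}\sum_{k}\Phi_{k}\Phi_{k}^{T}$ keeps $\lambda_{\max}(\overline{H}_{n})=O(1)$; this is \ref{(H1)} with the required $\beta\in(0,1/2)$.

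Granting $\theta_{n}\to\theta$ from Theorem \ref{snaas}, I would establish \ref{(H2a)} by a Cesàro--martingale argument: the conditional expectation $\mathbb{E}[\Phi_{k}\Phi_{k}^{T}\mid\mathcal{F}_{k-1}]=\Sigma(\theta_{k-1})$ converges to $\Sigma(\theta)=H$ by \ref{(A1c)}, the fluctuation part is a vanishing martingale average, and the regularization average disappears, so $\overline{H}_{n}\to H$. The moment bound \eqref{momenta} with $a=4>2$ follows again from $\|\nabla_{h}g\|\le\sqrt{2}\|X\|$ and \ref{(HS1b)}, hence Theorem \ref{theoratetheta} delivers $\|\theta_{n}-\theta\|^{2}=O(\ln n/n)$. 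For \ref{(H2b)} I would decompose $\overline{H}_{n}-H$ into (i) the deterministic regularization bias $\frac{1}{n+1}\sum_{k}\beta_{k}Z_{k}Z_{k}^{T}$, of exact order $n^{-\beta}$, (ii) the plug-in error $\frac{1}{n+1}\sum_{k}(\Sigma(\theta_{k-1})-H)$, controlled by the Lipschitz continuity of $\Sigma$ and the rate just obtained, and (iii) a centered martingale term handled by a law of the iterated logarithm, of order $\sqrt{\ln\ln n/n}$; since $\beta<1/2$, term (i) dominates and $\|\overline{H}_{n}-H\|^{2}=O(n^{-2\beta})$, the same bound transferring to $\overline{H}_{n}^{-1}-H^{-1}$ by the resolvent identity and \ref{(H1)}. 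Theorem \ref{thetatlc} then gives $\sqrt{n}(\theta_{n}-\theta)\xrightarrow{\mathcal{L}}\mathcal{N}(0,H^{-1}\Sigma H^{-1})$, and the information-matrix identity $\nabla^{2}G(\theta)=\mathbb{E}[\nabla_{h}g\,\nabla_{h}g^{T}]$ noted in the text yields $\Sigma=H$, collapsing the limiting covariance to $H^{-1}$. The main obstacle I anticipate is the simultaneous calibration in step \ref{(H2b)}: verifying that the regularization bias is sharply of order $n^{-\beta}$ (neither larger nor cancelling) while the statistical fluctuations and the plug-in error are genuinely $o(n^{-\beta})$, which is precisely what pins down the exponent $2\beta$ and requires the fourth-moment assumption \ref{(HS1b)}.
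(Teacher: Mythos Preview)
Your proposal is correct and follows essentially the same route as the paper's own proof: verify \ref{(A1)}, \ref{(A2)} from the explicit softmax formulas and moment assumptions, use the injected $\beta_{k}Z_{k}Z_{k}^{T}$ term to get the lower spectral bound in \ref{(H1)}, then cascade through Theorems \ref{snaas}, \ref{theoratetheta}, \ref{thetatlc}, checking \ref{(H2a)} and \ref{(H2b)} along the way via the same martingale/Ces\`aro decomposition of $\frac{1}{n}\sum_{k}\Phi_{k}\Phi_{k}^{T}$, and finally invoke $\Sigma=H$ to collapse $H^{-1}\Sigma H^{-1}$ to $H^{-1}$. The only cosmetic differences are your sharper constant $\|\nabla_{h}g\|^{2}\le 2\|X\|^{2}$ (the paper uses $K\|X\|^{2}$) and your explicit mention of the resolvent identity for $\overline{H}_{n}^{-1}-H^{-1}$, which the paper leaves implicit.
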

The proof is given in Appendix \ref{proof:ASNsoft}.

As far as we know, this is the first theoretical result covering the softmax regression using a stochastic Newton algorithm. In such a setting, the SNA proposes efficient online estimates and the convergence guarantees can be established  with weak assumptions {on the first moments of the features and about local strong convexity at the optimum only}.

\subsubsection{The WASNA}
Let us now consider the weighted averaged stochastic Newton algorithm defined recursively for all $n \geq 1$ by
\begin{equation}
\label{alg:WASNA_soft}
\left\{
\begin{aligned}[2]
& \overline{\Phi}_{n+1} = \nabla_{h} g \left( X_{n+1} , Y_{n+1} , {\theta}_{n,\tau} \right) \\
& \tilde{\theta}_{n+1} = \tilde{\theta}_{n} - \gamma_{n+1} \overline{S}_{n}^{-1} \nabla_{h} g \left( X_{n+1} , Y_{n+1} , \theta_{n} \right) \\
& \theta_{n+1, \tau} = \left( 1- \tau_{n+1} \right) \theta_{n, \tau} + \tau_{n+1} \tilde{\theta}_{n+1} \\
& S_{n+ \frac{1}{2}}^{-1} = S_{n}^{-1} - \left( 1+ \beta_{n+1} Z_{n+1}^{T}S_{n}^{-1}Z_{n+1} \right)^{-1} \beta_{n+1}S_{n}^{-1} Z_{n+1}Z_{n+1}^{T}S_{n}^{-1} \\
& S_{n+1}^{-1} = S_{n + \frac{1}{n}}^{-1} - \left( 1+ \overline{\Phi}_{n+1}^{T}S_{n + \frac{1}{n}}^{-1} \overline{\Phi}_{n+1} \right)^{-1} S_{n + \frac{1}{n}}^{-1} \overline{\Phi}_{n+1}\overline{\Phi}_{n+1}^{T}S_{n + \frac{1}{n}}^{-1} ,
\end{aligned}
\right.
\end{equation}
with $\tilde{\theta}_{0} = \theta_{0,\tau}$ bounded, $\overline{S}_{n}^{-1} = (n+1)S_{n}^{-1}$ and $S_{0}^{-1}$ symmetric and positive, $\gamma_{n} = c_{\gamma}n^{-\gamma}$ with $c_{\gamma}> 0$ and $\gamma \in (1/2,1)$, $\tau_{n} = \frac{\ln (n+1)^{\omega}}{\sum_{k=1}^{n}\ln(k+1)^{\omega}}$, with $\omega \geq 0$,  $\beta_{n} = c_{\beta}n^{-\beta}$ with $c_{\beta}>0$ and $\beta \in (\gamma - 1/2)$. Finally, $Z_{n}$ are i.i.d random vectors with $Z_{1} \sim \mathcal{N}\left( 0 ,I_{d} \right)$.
The following theorem gives rates of convergence of the weighted averaged stochastic Newton algorithm for the softmax regression.
\begin{theo}\label{ASNsoft}
 {Suppose that $X$ admits a moment of order $4$ and that the Hessian $\nabla^{2}G(\theta)$ is positive.} Then, the iterates of the weighted averaged stochastic Newton algorithm defined by \eqref{alg:WASNA_soft} satisfy 
\[
\left\| \tilde{\theta}_{n} - \theta \right\|^{2} = O \left( \frac{\ln n}{n^{\gamma}} \right) \quad a.s. \quad \quad \text{and} \quad \quad \left\| \theta_{n,\tau} - \theta \right\|^{2} = O \left( \frac{\ln n}{n} \right) \quad a.s,
\]
Furthermore,
\[
\sqrt{n} \left( \theta_{n, \tau} - \theta \right) \xrightarrow[n\to + \infty]{\mathcal{L}} \mathcal{N}\left( 0 , H^{-1} \right) .
\]
Finally,
\[
\left\| \overline{S}_{n} - S \right\|^{2} = O \left( \frac{1}{n^{2\beta}} \right) \quad a.s. \quad \quad \text{and} \quad \quad \left\| \overline{S}_{n}^{-1} - S^{-1} \right\|^{2} = O \left( \frac{1}{n^{2\beta}} \right) \quad a.s.
\]
\end{theo}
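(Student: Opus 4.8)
The plan is to verify that the softmax instantiation \eqref{alg:WASNA_soft} fits the general framework of Section \ref{sec:ASN_algo}, and then to invoke the chain of general results (Theorems \ref{theothetanbar}, \ref{theothetatilde} and \ref{jenaipleinle}) rather than redoing the stochastic-approximation analysis from scratch. The starting observation, specific to the softmax loss, is that $\sigma(X,h)$ and hence $\mathrm{diag}(\sigma(X,h)) - \sigma(X,h)\sigma(X,h)^{T}$ are uniformly bounded (entries in $[0,1]$), so that $\|\nabla_{h} g(X,Y,h)\| \le \sqrt{K}\,\|X\|$ and $\|\nabla^{2}G(h)\|_{op} \le \mathbb{E}[\|X\|^{2}]$. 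Together with the smoothness of $h \mapsto \sigma(X,h)$, this yields Assumptions \ref{(A1a)}, \ref{(A1c)} and \ref{(A2a)}, \ref{(A2c)} directly from \ref{(HS1a)}--\ref{(HS1b)} (the Lipschitz Hessian \ref{(A2c)} costing the third moment, covered by \ref{(HS1b)}), while \ref{(A2b)} is exactly \ref{(HS2)}; note that \ref{(A1b)} holds with $C'=0$ since $\|\nabla_{h}g\|^{2}\le K\|X\|^{2}$. The decisive structural fact is the Fisher-information identity $H=\nabla^{2}G(\theta)=\mathbb{E}[\nabla_{h}g(X,Y,\theta)\nabla_{h}g(X,Y,\theta)^{T}]$ recorded in the setting: it forces both the Hessian-estimate limit $S=H$ and $\Sigma=H$, which is precisely what collapses the limiting covariance $\tfrac{\tau^{2}}{2\tau+\nu}H^{-1}\Sigma H^{-1}$ down to $H^{-1}$ once the logarithmic weights are used.

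Next I would establish \ref{(H1')} from the particular form
\[
\overline{S}_{n} = \frac{1}{n+1}\Bigl(S_{0} + \sum_{k=1}^{n} \overline{\Phi}_{k}\overline{\Phi}_{k}^{T} + \sum_{k=1}^{n} \beta_{k} Z_{k} Z_{k}^{T}\Bigr)
\]
produced by the two Riccati updates in \eqref{alg:WASNA_soft}. The upper bound $\lambda_{\max}(\overline{S}_{n}) = O(1)$ a.s.\ follows from $\|\overline{\Phi}_{k}\overline{\Phi}_{k}^{T}\|_{op} \le K\|X_{k}\|^{2}$ and a strong law for $\frac1n\sum_{k}\|X_{k}\|^{2}$ (finite by \ref{(HS1a)}), together with $\frac1n\sum_{k}\beta_{k}\to 0$. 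The lower bound is the delicate point: using that $\sum_{k=1}^{n}\beta_{k}\sim \frac{c_{\beta}}{1-\beta}n^{1-\beta}$ and that the weighted empirical matrix $(\sum_{k}\beta_{k})^{-1}\sum_{k}\beta_{k}Z_{k}Z_{k}^{T}\to I_{d}$, one obtains $\lambda_{\min}(S_{n})\ge c\,n^{1-\beta}$ a.s., hence $\lambda_{\min}(\overline{S}_{n})\ge c\,n^{-\beta}$ and $\lambda_{\max}(\overline{S}_{n}^{-1})=O(n^{\beta})$ a.s.; the admissible range $\beta\in(0,\gamma-1/2)$ is exactly what \ref{(H1')} demands.

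With \ref{(H1')} in hand, Theorem \ref{theothetanbar} gives $\tilde{\theta}_{n}\to\theta$ and $\theta_{n,\tau}\to\theta$ a.s. I would then verify \ref{(H2a')}: since $\theta_{k-1,\tau}\to\theta$ a.s.\ and $h\mapsto\mathbb{E}[\nabla_{h}g\,\nabla_{h}g^{T}]$ is continuous at $\theta$ by \ref{(A1c)}, a martingale/strong-law argument for the triangular array gives $\frac{1}{n+1}\sum_{k}\overline{\Phi}_{k}\overline{\Phi}_{k}^{T}\to\Sigma(\theta)=H$, while $\frac{1}{n+1}\sum_{k}\beta_{k}Z_{k}Z_{k}^{T}\to 0$, so that $\overline{S}_{n}\to H$. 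Checking the moment bound \eqref{momenteta} reduces to $\mathbb{E}[\|\nabla_{h}g\|^{2+2\eta}]\le C\,\mathbb{E}[\|X\|^{2+2\eta}]$, so \ref{(HS1b)} permits $\eta=1$, and $\eta=1>\frac{1}{\gamma}-1$ holds since $\gamma>1/2$. Theorem \ref{theothetatilde} then yields $\|\tilde{\theta}_{n}-\theta\|^{2}=O(\ln n/n^{\gamma})$.

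For the averaged iterate I would avoid \ref{(H2b')} and instead apply Theorem \ref{jenaipleinle}, whose hypothesised Hessian form matches ours with $\overline{u}_{k}\equiv 1$. Condition \eqref{majinthess} is immediate from $\mathbb{E}[\|\overline{\Phi}_{k}\overline{\Phi}_{k}^{T}\|\,|\,\mathcal{F}_{k-1}]\le K\,\mathbb{E}[\|X\|^{2}]<\infty$, and \eqref{sumfini} follows from the uniform bound $\mathbb{E}[\|\overline{\Phi}_{k}\overline{\Phi}_{k}^{T}\|^{2}\,|\,\mathcal{F}_{k-1}]\le K^{2}\,\mathbb{E}[\|X\|^{4}]<\infty$ (by \ref{(HS1b)}) together with the summability noted after Theorem \ref{jenaipleinle} for $\alpha<\frac{3-\gamma}{2}$; since the logarithmic weights are $\mathcal{GS}(-1)$ with $n\tau_{n}\to 1$, one has $\frac{\tau^{2}}{2\tau+\nu}=1$ and the limiting covariance is $H^{-1}\Sigma H^{-1}=H^{-1}$. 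This simultaneously delivers $\|\theta_{n,\tau}-\theta\|^{2}=O(\ln n/n)$ and the central limit theorem $\sqrt{n}(\theta_{n,\tau}-\theta)\xrightarrow[n\to+\infty]{\mathcal{L}}\mathcal{N}(0,H^{-1})$. Finally, for the Hessian rate I would split $\overline{S}_{n}-H$ into the gradient-average error, the regularization term, and $S_{0}/(n+1)$: the bias of the first is controlled by \ref{(A2c)} and the averaged rate, hence $O(\sqrt{\ln n/n})=o(n^{-\beta})$, so the dominant contribution is the regularization term with mean $\sim\frac{c_{\beta}}{1-\beta}n^{-\beta}I_{d}$, giving $\|\overline{S}_{n}-H\|^{2}=O(n^{-2\beta})$; the resolvent identity $\overline{S}_{n}^{-1}-H^{-1}=\overline{S}_{n}^{-1}(H-\overline{S}_{n})H^{-1}$ with bounded factors then transfers the same rate to the inverse. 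I expect the main obstacle to be the almost-sure lower bound on $\lambda_{\min}(\sum_{k}\beta_{k}Z_{k}Z_{k}^{T})$ underlying \ref{(H1')}, since the weights $\beta_{k}$ decay and one needs a genuinely almost-sure, not merely in-expectation, control of the smallest eigenvalue of this weighted Gaussian sum.
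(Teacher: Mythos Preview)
Your proposal is correct and follows the paper's overall verification-of-assumptions strategy, but you diverge from the paper at one point: for the averaged iterate and the CLT you invoke Theorem~\ref{jenaipleinle} (exploiting the structural form of $\overline{S}_{n}$ and checking \eqref{majinthess}--\eqref{sumfini}), whereas the paper instead verifies \ref{(H2b')} directly and applies Theorem~\ref{theomoy}. Concretely, the paper first extracts a \emph{preliminary} rate $\|\theta_{\tau,n}-\theta\|^{2}=o((\ln n)^{1+\delta}/n^{\gamma})$ from the $\tilde{\theta}_{n}$-rate via Lemma~\ref{lemmok}, then uses the quadratic bound $\|\Sigma(h)-\Sigma(\theta)\|\le 4K\,\mathbb{E}[\|X\|^{4}]\,\|h-\theta\|^{2}$ (from \eqref{maremaremare}--\eqref{maremaremare2}) together with Toeplitz to get $\|\overline{S}_{n}-H\|^{2}=O(n^{-2\beta})$, which is \ref{(H2b')}; Theorem~\ref{theomoy} then delivers both the $O(\ln n/n)$ rate and the CLT. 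Your route mirrors what the paper does for logistic regression (Theorem~\ref{theo:ASNlog}) and has the advantage of not needing the preliminary rate step; the paper's route is shorter here because the softmax loss happens to be regular enough that \ref{(H2b')} falls out cheaply. One small inaccuracy: when you bound the bias of $\frac{1}{n+1}\sum_{k}\overline{\Phi}_{k}\overline{\Phi}_{k}^{T}-H$ you cite \ref{(A2c)}, but the relevant control is on $h\mapsto\Sigma(h)=\mathbb{E}[\nabla_{h}g\,\nabla_{h}g^{T}]$, not on $\nabla^{2}G$; in softmax these coincide only at $h=\theta$, so you should appeal to the Lipschitz bound on $\sigma(X,\cdot)$ (equation \eqref{maremaremare2}) rather than to \ref{(A2c)}.
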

The proof is given in Appendix \ref{proof:ASNsoft}.
{This is the first time to our knowledge that a stochastic Newton algorithm is considered in the softmax regression setting, for which convergence results hold under weak assumptions. Note as well that the Riccatti's trick to update the inverse of the Hessian estimates is particularly appropriate, as the dimensionality of $\theta$ gets larger.} 


\subsubsection{Extra numerical experiments for the softmax regression}
\label{app:softmax_simu}

The considered multinomial regression model  is defined in the case of three-label classification in dimension $d=3$, for all $k=1,2,3,$ by
\[
\mathbb{P}\left[ Y=k |X \right] = \frac{e^{\theta_{k}^{T}X}}{\sum_{k' = 1}^{3}e^{\theta_{k'}^{T}X}}
\]
with $\theta = \left( \theta_{1}^{T} , \theta_{2}^{T} , \theta_{3}^{T} \right)^{T}$ chosen randomly on the unit sphereof $\mathbb{R}^{9}$. 
In Figure \ref{fig:softmax2}, we display the evolution of the quadratic mean error of the different estimates, for three different initializations, for correlated Gaussian variables $X \sim \mathcal{N}\left(0 , A \text{diag}\left( \frac{i^{2}}{d^{2}} \right)_{i=1 , \ldots , d} A^{T} \right)$ where $A$ is an orthogonal matrix randomly generated.
Results in the case of heteroscedastic Gaussian variables $X \sim \mathcal{N}\left(0 ,  \text{diag}\left( \frac{i^{2}}{d^{2}} \right)_{i=1 , \ldots , d}  \right)$ are very similar and can be found in Figure \ref{fig:softmax1}.
In Figure \ref{fig:softmax2}, one can see that again the averaged versions (weighted or not) converge faster in the case of bad initial point. The improvement over the Adagrad algorithm is made clearer as the initial point is chosen further from the optimum.


\begin{figure}[H]
\centering\includegraphics[width=0.97\textwidth]{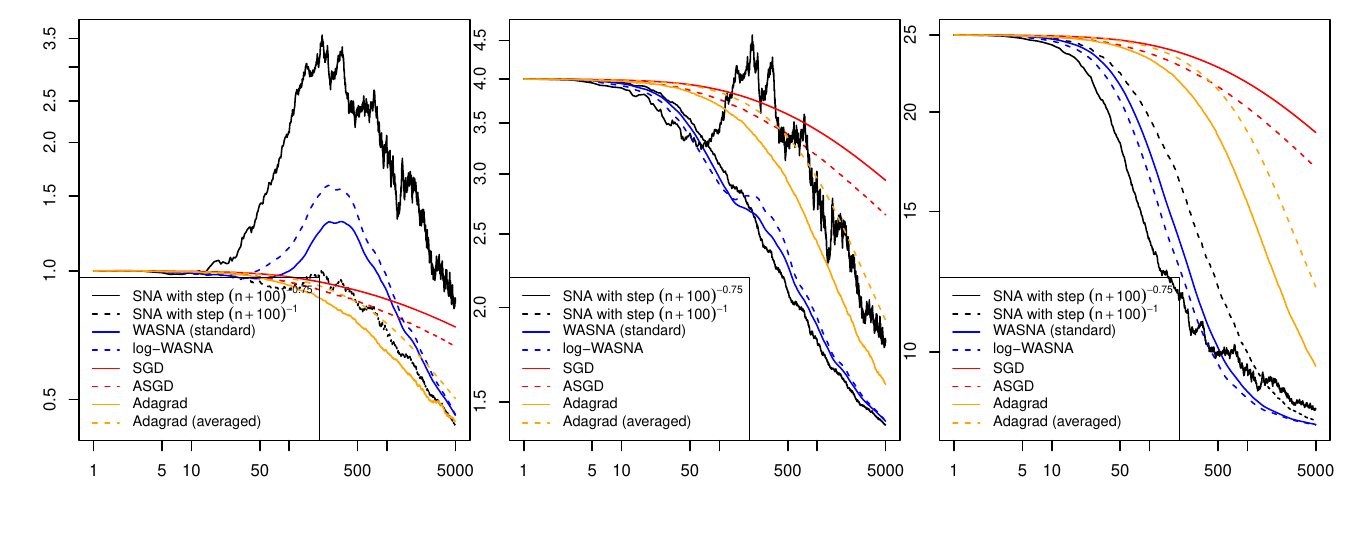}
\caption{\label{fig:softmax2}(Softmax regression with correlated Gaussian variables) Mean-squared error of the distance to the optimum $\theta$ with respect to the sample size for different initializations: $\theta_{0} = \theta + r_{0}U$, where $U$ is a uniform random variable on the unit sphere of $\mathbb{R}^{d}$, with  $r_{0} = 1$ (left), $r_0=2$ (middle) or $r_0=5$ (right). Each curve is obtained by an average over $50$ different samples of size $n=5000$ (drawing a different initial point each time).}
\end{figure}


\begin{figure}[H]
\centering\includegraphics[width=\textwidth]{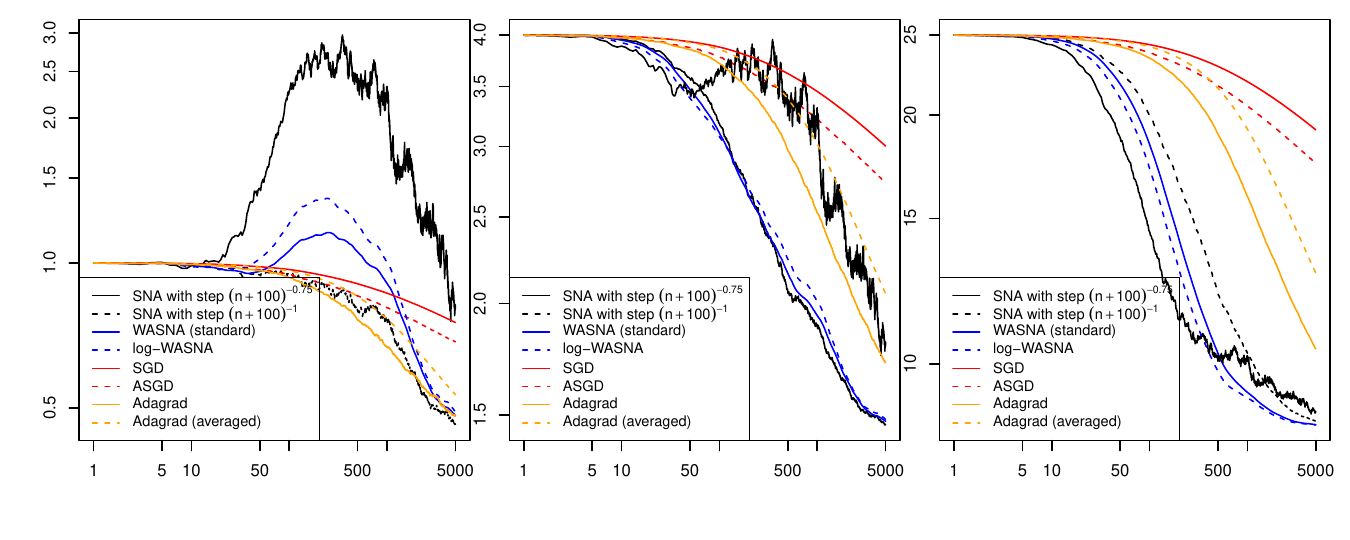}
\caption{\label{fig:softmax1}(Softmax regression with heteroscedastic Gaussian variables) Mean-squared error of the distance to the optimum $\theta$ with respect to the sample size for different initializations: $\theta_{0} = \theta + r_{0}U$, where $U$ is a uniform random variable on the unit sphere of $\mathbb{R}^{d}$, with  $r_{0} = 1$ (left), $r_0=2$ (middle) or $r_0=5$ (right). Each curve is obtained by an average over $50$ different samples of size $n=5000$ (drawing a different initial point each time).}
\end{figure}

\section{Proofs}\label{secproofs}

Remark that for the sake of simplicity, in all the proofs we consider that $c_{\theta}=c_{\gamma}'  = 0$. Indeed, for the cases where $c_{\theta}\neq 0$ or $c_{\gamma}' \neq 0$, one can consider $\overline{H'}_{n}^{-1} = \frac{n+1}{n+1+c_{\theta}} \overline{H}_{n}^{-1}$ and $\overline{S'}_{n}^{-1} = \frac{(n+1)^{\gamma}}{\left((n+1+c_{\gamma}\right)^{\gamma}}$. Then, these estimates have the same asymptotic behaviors as $\overline{H}_{n}^{-1}$ and $\overline{S}_{n}^{-1}$.
\subsection{Proof of Theorems \ref{snaas} and \ref{theothetanbar}}
\label{app:proof_as}
We only give the proof of Theorem \ref{theothetanbar} since taking $c_{\gamma} = 1 $ and $\gamma =1$ and exchanging $\overline{S}_{n}$ with $\overline{H}_{n}$ lead to the proof of Theorem \ref{snaas}.

With the help of a Taylor's decomposition of $G$, and thanks to Assumption \ref{(A2a)},
\begin{align*}
G \left( \tilde{\theta}_{n+1} \right) & = G \left( \tilde{\theta}_{n} \right) + \nabla G \left( \tilde{\theta}_{n} \right)^{T} \left( \tilde{\theta}_{n+1} - \tilde{\theta}_{n} \right) \\
&+ \frac{1}{2}\left( \tilde{\theta}_{n+1} - \tilde{\theta}_{n} \right)^{T}\int_{0}^{1} \nabla^{2}G \left( \tilde{\theta}_{n+1} + t \left( \tilde{\theta}_{n} - \tilde{\theta}_{n+1} \right) \right) dt \left( \tilde{\theta}_{n+1} - \tilde{\theta}_{n} \right) \\
& \leq G \left( \tilde{\theta}_{n} \right) + \nabla G \left( \tilde{\theta}_{n} \right)^{T} \left( \tilde{\theta}_{n+1} - \tilde{\theta}_{n} \right) + \frac{L_{\nabla G}}{2}\left\| \tilde{\theta}_{n+1} - \tilde{\theta}_{n} \right\|^{2}
\end{align*}
Then, since $\tilde{\theta}_{n+1} - \tilde{\theta}_{n} = - \gamma_{n+1} \overline{S}_{n}^{-1}\nabla_{h} g \left( X_{n+1} , \tilde{\theta}_{n} \right)$,
\begin{align*}
G \left( \tilde{\theta}_{n+1} \right) & = G \left( \tilde{\theta}_{n} \right) - \gamma_{n+1} \nabla G \left( \tilde{\theta}_{n} \right)^{T} \overline{S}_{n}^{-1} \nabla_{h} g \left( X_{n+1} , \tilde{\theta}_{n} \right)  \\
&\quad + \frac{L_{\nabla G}}{2}\gamma_{n+1}^{2}\left\| \overline{S}_{n}^{-1} \nabla_{h} g \left( X_{n+1} , \tilde{\theta}_{n} \right) \right\|^{2} \\
& \leq G \left( \tilde{\theta}_{n} \right) - \gamma_{n+1} \nabla G \left( \tilde{\theta}_{n} \right)^{T} \overline{S}_{n}^{-1} \nabla_{h} g \left( X_{n+1} , \tilde{\theta}_{n} \right) \\
&\quad + \frac{L_{\nabla G}}{2} \gamma_{n+1}^{2} \left\| \overline{S}_{n}^{-1} \right\|_{op}^{2}\left\|  \nabla_{h} g \left( X_{n+1} , \tilde{\theta}_{n} \right) \right\|^{2}
\end{align*}
Let us define $V_{n} = G \left( \tilde{\theta}_{n} \right) - G (\theta ) $. Then, we can rewrite the previous inequality as
\begin{align*}
V_{n+1} \leq V_{n} - \gamma_{n+1} \nabla G \left( \tilde{\theta}_{n} \right)^{T} \overline{S}_{n}^{-1} \nabla_{h} g \left( X_{n+1} , \tilde{\theta}_{n} \right)  + \frac{L_{\nabla G}}{2} \gamma_{n+1}^{2}\left\| \overline{S}_{n}^{-1} \right\|_{op}^{2}\left\|  \nabla_{h} g \left( X_{n+1} , \tilde{\theta}_{n} \right) \right\|^{2}
\end{align*}
and considering the conditional expectation w.r.t.\ $\mathcal{F}_{n}$, since $\tilde{\theta}_{n}$ and $\overline{S}_{n}^{-1}$ are $\mathcal{F}_{n}$-measurable,
\begin{align*}
\mathbb{E}\left[ V_{n+1} |\mathcal{F}_{n} \right] &\leq V_{n} - \gamma_{n+1} \nabla G \left( \tilde{\theta}_{n} \right)^{T} \overline{S}_{n}^{-1}\nabla G  \left(  \tilde{\theta}_{n} \right)  \\
&\quad + \frac{L_{\nabla G}}{2} \gamma_{n+1}^{2}\left\| \overline{S}_{n}^{-1} \right\|_{op}^{2} \mathbb{E}\left[ \left\|  \nabla_{h} g \left( X_{n+1} , \tilde{\theta}_{n} \right) \right\|^{2} |\mathcal{F}_{n} \right] .
\end{align*}
Then, thanks to Assumption \ref{(A1b)}, it comes
\begin{align*}
\mathbb{E}\left[ V_{n+1} |\mathcal{F}_{n} \right] &\leq  \left( 1 + \frac{C'L_{\nabla G}}{2}\gamma_{n+1}^{2} \left\| \overline{S}_{n}^{-1} \right\|_{op}^{2} \right) V_{n} - \gamma_{n+1} \lambda_{\min} \left( \overline{S}_{n}^{-1} \right) \left\| \nabla G \left( \tilde{\theta}_{n} \right) \right\|^{2} \\
&\quad + \frac{C L_{\nabla G}}{2} \gamma_{n+1}^{2} \left\| \overline{S}_{n}^{-1} \right\|_{op}^{2}
\end{align*}
Remark that thanks to Assumption  {\ref{(H1)}}, 
\[
\sum_{n\geq 0}\gamma_{n+1}^{2} \left\| \overline{S}_{n}^{-1} \right\|_{op}^{2} < + \infty \quad a.s .
\]
Then, since $\overline{S}_{n}^{-1}$ is positive and applying Robbins-Siegmund Theorem, $V_{n}$ almost surely converges to a finite random variable and
\[
\sum_{n \geq 0}\gamma_{n+1} \lambda_{\min} \left( \overline{S}_{n}^{-1} \right) \left\| \nabla G \left( \tilde{\theta}_{n} \right) \right\|^{2} < + \infty \quad p.s
\]
and since, thanks to Assumption  {\ref{(H1)}}, 
\[
\sum_{n\geq 0} \gamma_{n+1} \lambda_{\min} \left( \overline{S}_{n}^{-1} \right) = \sum_{n\geq 0} \gamma_{n+1} \frac{1}{\lambda_{\max} \left( \overline{S}_{n} \right)} = + \infty \quad a.s,
\]
this necessarily implies that $\liminf_{n} \left\| \nabla G \left( \tilde{\theta}_{n} \right) \right\| = 0$ almost surely. Since $G$ is strictly convex, this also implies that
\[
\liminf_{n} \left\| \tilde{\theta}_{n} - \theta \right\| = 0 \quad a.s \quad \quad \text{and} \quad \quad \liminf_{n} V_{n} = G\left( \tilde{\theta}_{n} \right) - G (\theta ) = 0 \quad a.s,
\]
and since $V_{n}$ converges almost surely to a random variable, this implies that $G\left( \tilde{\theta}_{n} \right)$ converges almost surely to $G(\theta)$. Finally, by strict convexity, $\tilde{\theta}_{n}$ converges almost surely to $\theta$.

\subsection{Proof of Theorem \ref{theoratetheta}}
\label{app:proof_rate_SN}
 Following the strategy used in \cite[Theorem 4.2]{BGBP2019}, the main ideas of the proof are the following. Remark that 
\begin{align}
\label{decxitheta} \theta_{n+1} - \theta & = \theta_{n} - \theta - \frac{1}{n+1}\overline{H}_{n}^{-1} \nabla G \left( \theta_{n} \right) + \frac{1}{n+1}\overline{H}_{n}^{-1}\xi_{n+1} \\
\notag & = \theta_{n} - \theta - \frac{1}{n+1}H^{-1} \nabla G \left( \theta_{n} \right) - \frac{1}{n+1}\left( \overline{H}_{n}^{-1} - H^{-1} \right)\nabla G\left( \theta_{n} \right) \\
\notag & + \frac{1}{n+1}\overline{H}_{n}^{-1}\xi_{n+1}  \\
\label{decdeltatheta} & = \left( 1- \frac{1}{n+1}\right)\left( \theta_{n} - \theta \right) - \frac{1}{n+1}H^{-1}\delta_{n} \\
\notag & - \frac{1}{n+1}\left( \overline{H}_{n}^{-1}- H^{-1} \right)\nabla G \left( \theta_{n} \right) + \frac{1}{n+1}\overline{H}_{n}^{-1} \xi_{n+1}
\end{align}
with $\xi_{n+1} = \nabla G \left( \theta_{n} \right) - \nabla_{h} g \left( X_{n+1} , \theta_{n} \right) $ and $\delta_{n} = \nabla G \left( \theta_{n} \right) - H \left( \theta_{n} - \theta \right)$ is the remainder term in the Taylor's decomposition of the gradient. Since $\theta_{n}$ and $\overline{H}_{n}$ are $\mathcal{F}_{n}$-measurable, and since $X_{n+1}$ is independent from $\mathcal{F}_{n}$, $\left( \xi_{n+1} \right)$ is a sequence of martingale differences adapted to the filtration $\left( \mathcal{F}_{n} \right)$. Moreover, inductively, one can check that
\begin{equation}
\label{dectheta} \theta_{n+1} - \theta = \frac{1}{n+1}\underbrace{\sum_{k=0}^{n} \overline{H}_{k}^{-1}\xi_{k+1}}_{=:M_{n+1}} \underbrace{- \frac{1}{n+1}\sum_{k=0}^{n}H^{-1}\delta_{k} - \frac{1}{n+1}\sum_{k=0}^{n} \left( \overline{H}_{k}^{-1} - H^{-1} \right) \nabla G \left( \theta_{k} \right)}_{=:\Delta_n} .
\end{equation}

\subsubsection{Convergence rate for $M_{n+1}$}
Note that $\left( M_{n} \right)$ is a martingale adapted to the filtration $\left( \mathcal{F}_{n} \right)$ and since $\overline{H}_{n}^{-1}$ is $\mathcal{F}_{n}$-measurable,
\begin{align*}
\left\langle M \right\rangle_{n+1} &  = \sum_{k=0}^{n} \overline{H}_{k}^{-1}\mathbb{E}\left[ \xi_{k+1}\xi_{k+1}^{T} |\mathcal{F}_{k} \right] \overline{H}_{k}^{-1} \\
& = \sum_{k=0}^{n} \overline{H}_{k}^{-1} \mathbb{E}\left[ \nabla_{h} g \left( X_{k+1} , \theta \right) \nabla_{h} g \left( X_{k+1} , \theta_{k} \right)^{T} |\mathcal{F}_{k} \right] \overline{H}_{k}^{-1} \\
&- \sum_{k=0}^{n} \overline{H}_{k}^{-1} \nabla G \left( \theta_{k} \right) \nabla G \left( \theta_{k} \right)^{T} \overline{H}_{k}^{-1}
\end{align*}  
Since $\overline{H}_{k}^{-1}$ converges almost surely to $H^{-1}$, since $\theta_{k}$ converges almost surely to $\theta$, and since $\nabla G$ is $L_{\nabla G}$ lipschitz,
\[
\frac{1}{n+1}\sum_{k=0}^{n} \overline{H}_{k}^{-1} \nabla G \left( \theta_{k} \right) \nabla G \left( \theta_{k} \right)^{T} \overline{H}_{k}^{-1} \xrightarrow[n\to + \infty]{a.s} 0.
\]
Moreover, Assumption \ref{(A1c)} entails that
\begin{align*}
\frac{1}{n+1}\sum_{k=0}^{n} \overline{H}_{k}^{-1} \mathbb{E}\left[ \nabla_{h} g \left( X_{k+1} , \theta \right) \nabla_{h} g \left( X_{k+1} , \theta_{k} \right)^{T} |\mathcal{F}_{k} \right] & \overline{H}_{k}^{-1} \\
\xrightarrow[n\to + \infty]{a.s}  H^{-1} &\mathbb{E}\left[ \nabla_{h} g \left( X , \theta \right)\nabla_{h} g (X,\theta)^{T} \right] H^{-1} .
\end{align*}
Setting $\Sigma := \mathbb{E}\left[ \nabla_{h} g \left( X , \theta \right)\nabla_{h} g (X,\theta)^{T} \right]$, one has
\begin{equation}
\label{convmn} \frac{1}{n+1}\left\langle M \right\rangle_{n+1} \xrightarrow[n\to + \infty]{a.s} H^{-1}\Sigma H^{-1} .
\end{equation}
Then, applying a law of large numbers for martingales (see \cite{Duf97}), for all $\delta > 0$,
\[
\frac{1}{n^{2}}\left\| M_{n+1} \right\|^{2} = o \left( \frac{(\ln n)^{1+\delta}}{n} \right) \quad a.s,
\]
and if inequality \eqref{momenta} is verified,
\[
\frac{1}{n^{2}}\left\| M_{n+1} \right\|^{2} = O \left( \frac{\ln n}{n} \right) \quad a.s.
\]

\subsubsection{Convergence rate for $\Delta_{n}$} Let us recall that
\[
\Delta_{n} :=- \frac{1}{n+1}\sum_{k=1}^{n}H^{-1}\delta_{k} - \frac{1}{n+1}\sum_{k=1}^{n} \left( \overline{H}_{k}^{-1} - H^{-1} \right) \nabla G \left( \theta_{k} \right).
\]
Given that $\theta_{k} $ converges almost surely to $\theta$, and since the Hessian is continuous at $\theta$, 
\[
\left\| \delta_{n} \right\| = \left\| \int_{0}^{1} \left(  \nabla^{2} G \left( \theta + t\left( \theta_{n} - \theta \right) - \nabla^{2}G(\theta) \right) \right) dt \left( \theta_{n} - \theta \right) \right\| = o \left( \left\| \theta_{n} - \theta \right\| \right) \quad a.s. 
\]
Similarly, since the gradient is $L_{\nabla G}$-lipschitz, one can check that 
\[
\left\| \left( \overline{H}_{k}^{-1} - H^{-1} \right) \nabla G \left( \theta_{n} \right) \right\| = o \left( \left\| \theta_{n} - \theta \right\| \right) \quad a.s 
\]
Then, on has
\begin{align*}
    \left\| \Delta_{n+1} \right\|  \leq  \frac{n}{n+1}\left\| \Delta_{n}\right\| + \frac{1}{n+1}\left\| \delta_{n} \right\| & \leq \frac{n}{n+1}\left\| \Delta_{n} \right\| + \frac{1}{n+1} o \left( \left\| \theta_{n} - \theta \right\| \right) \quad a.s \\
    & = \frac{n}{n+1}\left\| \Delta_{n} \right\|  + \frac{1}{n+1}\left( \left\|\Delta_{n} \right\| + \left\|M_{n} \right\| \right) \quad a.s \\
    & = \left( 1- \frac{1}{n+1} + o \left( \frac{1}{n+1}\right) \right) \left\| \Delta_{n} \right\| + \frac{1}{n+1} o \left( \left\|M_{n} \right\| \right) \quad a.s
\end{align*}
and following computation such as Equations (6.27),(6.28),(6.34) and (6.35) in \cite{BGBP2019}, one has
\[
\left\| \Delta_{n} \right\|^{2} = O \left( \frac{1}{n^{2}} \left\| M_{n+1} \right\|^{2} \right) \quad a.s,
\]
which concludes the proof.

\subsection{Proof of Theorem \ref{thetatlc}}
\label{app:proof_tlc_SN}
In order to derive asymptotic normality of the estimates, start again from Equation \eqref{dectheta}: the first term will dictate the speed of convergence, while the other terms will collapse. First, since $\left( \xi_{n} \right)$ is a sequence of martingale differences adapted to the filtration $\left( \mathcal{F}_{n} \right)$, given \eqref{convmn} and \eqref{momenta}, the Central Limit Theorem for martingales (see \cite{Duf97}) reads as follows,
\begin{align}
\label{eq:TCL_Mn}
\frac{1}{\sqrt{n}} \sum_{k=0}^{n} \overline{H}_{k}^{-1} \xi_{k+1} &\xrightarrow[n\to + \infty]{\mathcal{L}} \mathcal{N} \left( 0 , H^{-1} \Sigma H^{-1} \right) .
\end{align}
It remains to show that the other terms on the right-hand side of  \eqref{dectheta} are negligible. 
Under Assumption \ref{(A2c)}, and since $\theta_{n}$ converges almost surely to $\theta$,
\begin{align*}
\left\| \delta_{n} \right\| &  = \left\| \int_{0}^{1} \left(  \nabla^{2} G \left( \theta + t\left( \theta_{n} - \theta \right) - \nabla^{2}G(\theta) \right) \right) dt \left( \theta_{n} - \theta \right) \right\| = O \left( \left\| \theta_{n} - \theta \right\|^{2} \right) \quad a.s.
\end{align*}
Theorem \ref{theoratetheta} coupled with the Toeplitz lemma imply in turn 
\begin{align}
\label{eq:speed_delta_n}
\frac{1}{n+1}\left\| \sum_{k=0}^{n} \delta_{k} \right\| &= o \left( \frac{(\ln n)^{2+\delta}}{n} \right) \quad a.s
\end{align}
In the same way,  the gradient being $L_{\nabla G}$-lipschitz and under Assumption \ref{(H2b)}, one has 
\begin{align}
\label{eq:speed_third_term}
\frac{1}{n+1}\left\| \sum_{k=0}^{n} \left( \overline{H}_{k}^{-1} - H^{-1}\right) \nabla G \left( \theta_{k} \right) \right\| = o \left(  \frac{(\ln n)^{2+\delta}}{n^{\min \left\lbrace \frac{1}{2} + p_{H} , 1 \right\rbrace}} \right) \quad a.s.
\end{align}
The  rates obtained in \eqref{eq:speed_delta_n} and \eqref{eq:speed_third_term} are negligible compared to the one in \eqref{eq:TCL_Mn}, which leads to the desired conclusion.

\subsection{Proof of Theorem \ref{theothetatilde}}
\label{app:proof_theothetatilde}
Considering the Weighted Averaged Stochastic Newton Algorithm defined by \eqref{def::genmoy}, Inequality \eqref{decdeltatheta} can be adapted such as
\begin{align}
\label{decdeltatilde} \tilde{\theta}_{n+1} - \theta =& \left( 1- \gamma_{n+1} \right) \left( \tilde{\theta}_{n} - \theta \right) - \gamma_{n+1}H^{-1} \tilde{\delta}_{n} \\
\notag & - \gamma_{n+1}\left( \overline{S}_{n}^{-1} - H^{-1} \right) \nabla G \left( \tilde{\theta}_{n} \right) + \gamma_{n+1}\overline{S}_{n}^{-1}\tilde{\xi}_{n+1} 
\end{align}
with $\tilde{\xi}_{n+1} = \nabla G \left( \tilde{\theta}_{n} \right) - \nabla_{h} g \left( X_{n+1} , \tilde{\theta}_{n} \right) $ and $\tilde{\delta}_{n} = \nabla G \left( \tilde{\theta}_{n} \right) - H \left( \tilde{\theta}_{n} - \theta \right)$ is the remainder term of the Taylor's expansion of the gradient. Since $\tilde{\theta}_{n},S_{n}$ are $\mathcal{F}_{n}$-measurable and $X_{n+1}$ is independent from $\mathcal{F}_{n}$, $\left( \tilde{\xi}_{n+1} \right)$ is  a sequence of martingale differences adapted to the filtration $\left( \mathcal{F}_{n} \right)$. Moreover, inductively, one can check that

\begin{align}
 \label{decbeta} \tilde{\theta}_{n} - \theta  = & \beta_{n,0}\left(  \tilde{\theta}_{0} - \theta \right) \\
 \notag & \underbrace{- \sum_{k=0}^{n-1} \beta_{n,k+1}\gamma_{k+1}H^{-1}\tilde{\delta}_{k} - \sum_{k=0}^{n-1} \beta_{n,k+1}\gamma_{k+1} \left( \overline{S}_{k}^{-1} -H^{-1} \right) \nabla G \left( \tilde{\theta}_{k} \right)}_{:= \tilde{\Delta}_{n}}  \\
 \notag & + \underbrace{\sum_{k=0}^{n-1} \beta_{n,k+1} \gamma_{k+1}\overline{S}_{k}^{-1}\tilde{\xi}_{k+1} }_{:= \tilde{M}_{n}} 
\end{align}
with for all $k,n \geq 0$ such that $k \leq n$, $\beta_{n,k} = \prod_{j=k+1}^{n} \left( 1- \gamma_{j} \right)$ and $\beta_{n,n}=1$. Focusing on the last term of \eqref{decbeta},  applying Theorem 6.1 in \cite{CGBP2020} and thanks to Inequality \eqref{momenteta}
\begin{align}
\label{eq:speed_av_last_term}
\left\| \sum_{k=0}^{n-1} \beta_{n,k+1} \gamma_{k+1} \overline{S}_{k}^{-1} \tilde{\xi}_{k+1} \right\|^{2} = O \left( \frac{\ln n}{n^{\gamma}} \right) \quad a.s.
\end{align}
Furthermore, one can check that
\[
\left| \beta_{n,0} \right| = O \left( \exp \left( - \frac{c_{\gamma}}{1- \gamma}n^{1-\gamma} \right) \right) ,
\]
and the term $\beta_{n,0}\left( \tilde{\theta}_{0} - \theta \right)$ is negligible compared to \eqref{eq:speed_av_last_term}. 
Considering now 
$\tilde{\Delta}_{n} $ and following the proof of Theorem \ref{theoratetheta}, one can check that
\[
\left\| \tilde{\delta}_{n} \right\| = o \left( \left\| \tilde{\theta}_{n} - \theta \right\| \right) \quad a.s \quad \quad \text{and} \quad \quad \left\| \left( \overline{S}_{n}^{-1} - H^{-1 }\right) \nabla G \left( \tilde{\theta}_{n} \right) \right\| = o \left( \left\| \tilde{\theta}_{n} - \theta \right\| \right) \quad a.s
\]
Let $n_{0}$ such that for all $n \geq n_{0}$, $\gamma_{n} \leq 1$. Then, for all $n \geq n_{0}$, 
\begin{align*}
\left\| \tilde{\Delta}_{n+1} \right\|  \leq & \left( 1- \gamma_{n+1} \right) \left\| \tilde{\Delta}_{n} \right\| + \gamma_{n+1} \left( \left\| H^{-1} \tilde{\delta}_{n} \right\| + \left\| \left( \overline{S}_{n}^{-1} - H^{-1 }\right) \nabla G \left( \tilde{\theta}_{n} \right) \right\|  \right) \\
 = & \left( 1- \gamma_{n+1} \right) \left\| \tilde{\Delta}_{n} \right\| +  o \left(\gamma_{n+1} \left\| \tilde{\theta}_{n} - \theta \right\| \right) \quad a.s \\
 = & \left( 1- \gamma_{n+1} \right) \left\| \tilde{\Delta}_{n} \right\| + o \left( \gamma_{n+1} \left\| \tilde{M}_{n} + \beta_{n,0}\left( \tilde{\theta}_{0} - \theta \right) \right\| + \gamma_{n+1} \left\| \tilde{\Delta}_{n} \right\| \right) \quad a.s \\
 = & \left( 1- \gamma_{n+1} + o(\gamma_{n+1}) \right) \left\| \tilde{\Delta}_{n} \right\| \\
&+ o \left( \gamma_{n+1} \left( \sqrt{\frac{\ln n}{n^{\gamma}}} + \exp \left( - \frac{c_{\gamma}}{1-\gamma}n^{1-\gamma} \right) \right) \right) \quad a.s
\end{align*}
and applying a lemma of stabilization \citep{Duf97} or with analogous calculus to the ones of the proof of Lemma 3 in \cite{pelletier1998almost}, it comes
\[
\left\| \tilde{\Delta}_{n}\right\|^{2} = O \left( \frac{\ln n}{n^{\gamma}} \right) \quad a.s.
\]

\subsection{Proof of Theorem \ref{theomoy}}
\label{app:proof_thm_theomoy}

Remark that for all $n \geq 0$, $\theta_{n,\tau}$ can be written as
\begin{equation}
\label{decgenmoy} \theta_{n,\tau} - \theta =  \underbrace{ \prod_{k=j}^{n}\left( 1- \tau_{j} \right) }_{=: \kappa_{n,0}} \left( \theta_{0,\tau} - \theta \right) + \sum_{k=0}^{n-1}\underbrace{\prod_{j=k+1}^{n} \left( 1- \tau_{j} \right)}_{=: \kappa_{n,k}} \tau_{k+1} \left( \tilde{\theta}_{k} - \theta \right) .
\end{equation}
with $\kappa_{n,n} = 1$. Remark also that \eqref{decdeltatilde} can be written as
\begin{equation}
\tilde{\theta}_{n} - \theta = \frac{ \tilde{\theta}_{n} - \theta - \left( \tilde{\theta}_{n+1} - \theta \right)}{\gamma_{n+1}} - H^{-1} \tilde{\delta}_{n} -  \left( \overline{S}_{n}^{-1} - H^{-1} \right) \nabla G \left( \tilde{\theta}_{n} \right) + \overline{S}_{n}^{-1} \tilde{\xi}_{n+1}
\end{equation}
Then, \eqref{decgenmoy} can be written as
\begin{align}
\notag \theta_{n,\tau} - \theta = \kappa_{n,0} \left( \theta_{0,\tau} - \theta \right) + \sum_{k=1}^{n} \kappa_{n,k}\tau_{k+1}\frac{ \tilde{\theta}_{k} - \theta - \left( \tilde{\theta}_{k+1} - \theta \right)}{\gamma_{k+1}} - H^{-1} \sum_{k=0}^{n-1} \kappa_{n,k}\tau_{k+1}\tilde{\delta}_{k} \\
\label{decdeporc} -  \sum_{k=0}^{n-1} \kappa_{n,k}\tau_{k+1}\left( \overline{S}_{k}^{-1} - H^{-1} \right) \nabla G \left( \tilde{\theta}_{k} \right) + \sum_{k=0}^{n-1} \kappa_{n,k}\tau_{k+1}\overline{S}_{k}^{-1} \tilde{\xi}_{k+1}
\end{align}
The rest of the proof consists in establishing the rate of convergence of each term on the right-hand side  \eqref{decdeporc}.

\medskip

\noindent \textbf{Bounding $\left\| \prod_{k=1}^{n}\left( 1- \tau_{k} \right) \left( \theta_{0,\tau} - \theta \right) \right\|$:  } Since $n \tau_{n}$ converges to $\tau > 1/2$, there is a rank $n_{\tau}$ such that for all $n \geq n_{\tau}$, $0 \leq \tau_{n} \leq 1$, so that for all $n \geq n_{\tau}$,
\begin{align}
\notag
\prod_{k=1}^{n} \left| 1- \tau_{k} \right| &\leq \prod_{k=1}^{n_{\tau}-1} \left| 1- \tau_{k} \right| \exp \left( \sum_{k=n_{\tau}}^{n}  1- \tau_{k}  \right) \\
\label{majtaun}
&\leq \prod_{k=1}^{n_{\tau}-1} \left| 1- \tau_{k} \right| \exp \left( - \sum_{k=n_{\tau}}^{n} \tau_{k} \right) = O \left( \frac{1}{n^{\tau}} \right) .
\end{align}
Then
\[
\left\| \prod_{k=1}^{n}\left( 1- \tau_{k} \right) \left( \theta_{0,\tau} - \theta \right) \right\| = O \left( \frac{1}{n^{\tau}} \right) \quad a.s.
\]
and this term is negligible since $\tau > 1/2$.
\medskip

\noindent \textbf{Bounding $\sum_{k=0}^{n-1} \kappa_{n,k}\tau_{k+1}\tilde{\delta}_{k}$:  } Since
\[
\left\| \tilde{\delta}_{n} \right\| = O \left( \left\| \tilde{\theta}_{n} - \theta \right\|^{2} \right) \quad a.s,
\]
and with the help of Theorem \ref{theothetatilde}, for all $\delta > 0$, there is a positive random variable $B_{\delta}$ such that
\[
\left\| \sum_{k=0}^{n-1} \kappa_{n,k}\tau_{k+1}\tilde{\delta}_{k} \right\| \leq B_{\delta} \sum_{k=1}^{n} \left| \kappa_{n,k} \right|\tau_{k+1}\frac{(\ln (+1))^{1+\delta}}{(k+1)^{\gamma}}  \quad a.s.
\]
Then, since the sequence $\left( \frac{(\ln n)^{1+\delta}}{n^{\gamma}} \right)$ is in $\mathcal{G}\mathcal{S}( - \gamma)$, applying Lemma \ref{lemmok}, 
\[
\left\| \sum_{k=0}^{n-1} \kappa_{n,k}\tau_{k+1}\tilde{\delta}_{k} \right\| = o \left( \frac{(\ln n)^{1+\delta}}{n^{\gamma}} \right) \quad a.s.
\]

\medskip

\noindent\textbf{Bounding $\sum_{k=0}^{n-1} \kappa_{n,k}\tau_{k+1}\left( \overline{S}_{k}^{-1} - H^{-1} \right) \nabla G \left( \tilde{\theta}_{k} \right)$: } Thanks to Assumption {\ref{(H2b)}} and since the gradient of $G$ is $L_{\nabla G}$ lipshitz, for all $\delta > 0$, there is a positive random variable $B_{\delta}'$ such that
\[
\left\| \sum_{k=0}^{n-1} \kappa_{n,k}\tau_{k}\left( \overline{S}_{k}^{-1} - H^{-1} \right) \nabla G \left( \tilde{\theta}_{k} \right) \right\| \leq B_{\delta}' \sum_{k=0}^{n-1} \left|\kappa_{n,k}\right| \tau_{k+1} \frac{(\ln (k+1))^{1/2 + \delta}}{(k+1)^{p_{H}+ \gamma/2}} \quad a.s.
\]
Then, since the sequence $\left( \frac{(\ln n)^{1/2 + \delta}}{n^{p_{H} + \gamma/2}} \right)$ is in $\mathcal{G}\mathcal{S}(  -p_{H} - \gamma /2)$, applying Lemma \ref{lemmok},
\[
\left\| \sum_{k=0}^{n-1} \kappa_{n,k}\tau_{k+1}\left( \overline{S}_{k}^{-1} - H^{-1} \right) \nabla G \left( \tilde{\theta}_{k} \right) \right\| = o \left( \frac{(\ln n)^{1/2 + \delta}}{n^{p_{H} + \gamma /2}} \right) \quad a.s.
\]

\medskip

\noindent\textbf{Bounding $\sum_{k=0}^{n-1} \kappa_{n,k}\tau_{k+1}\frac{ \tilde{\theta}_{k} - \theta - \left( \tilde{\theta}_{k+1} - \theta \right)}{\gamma_{k+1}}$: } Applying an Abel's transform, one can check that
\begin{align*}
\sum_{k=0}^{n-1} \kappa_{n,k}\tau_{k+1} &  \frac{ \tilde{\theta}_{k} - \theta - \left( \tilde{\theta}_{k+1} - \theta \right)}{\gamma_{k+1}} \\
=& \frac{\kappa_{n,0}\tau_{1}}{\gamma_{1}} \left( \tilde{\theta}_{0} - \theta \right) - \frac{\tau_{n}}{\gamma_{n}}\left( \tilde{\theta}_{n} - \theta \right) \\
&+ \sum_{k=1}^{n-1} \kappa_{n,k} \tau_{k+1} \gamma_{k+1}^{-1} \left( 1- \left(1- \tau_{k+1} \right)\frac{\tau_{k} \gamma_{k}^{-1}}{\tau_{k+1} \gamma_{k+1}^{-1}} \right) \left( \tilde{\theta}_{k} - \theta \right) 
\end{align*}
Thanks to \eqref{majtaun},
\[
\frac{\kappa_{n,1}\tau_{1}}{\gamma_{1}} \left\| \tilde{\theta}_{0} - \theta \right\| = O \left( \frac{1}{n^{\tau}} \right) \quad a.s .
\]
Furthermore, using Theorem \ref{theothetatilde}, 
\[
\frac{\tau_{n+1}}{\gamma_{n+1}}\left\| \tilde{\theta}_{n+1} - \theta \right\| = O \left( \frac{\sqrt{ \ln n}}{n^{1- \gamma /2}}  \right) \quad a.s.
\]
Finally, since $\tau_{n}$ is in $\mathcal{G}\mathcal{S}(-1 )$,
\begin{align*}
 1- \left(1- \tau_{n+1} \right)\frac{\tau_{n} \gamma_{n}^{-1}}{\tau_{n} \gamma_{n+1}^{-1}}  =&  1 + \underbrace{\left(1- \tau_{n+1} \right)}_{= 1 - \frac{\tau}{n} + o \left( \frac{1}{n}\right)}  \underbrace{\left( 1- \frac{\tau_{n}}{\tau_{n+1}} \right)}_{= \frac{-1}{n} + o \left(\frac{1}{n} \right)}\underbrace{\frac{ \gamma_{n}^{-1}}{ \gamma_{n+1}^{-1}}}_{=1+ \frac{\gamma}{n} + o \left( \frac{1}{n}\right)} \\
 &-   \underbrace{\left(1- \tau_{n+1} \right)}_{= 1 - \frac{\tau}{n} + o \left( \frac{1}{n}\right)} \underbrace{\frac{ \gamma_{n}^{-1}}{ \gamma_{n+1}^{-1}}}_{=1+ \frac{\gamma}{n} + o \left( \frac{1}{n}\right)} \\
 =& \frac{-2 - \gamma}{n} + o \left( \frac{1}{n} \right) 
\end{align*}
and applying Theorem \ref{theothetatilde}, for all $\delta > 0$,
\begin{align*}
\sum_{k=1}^{n-1} \kappa_{n,k} \tau_{k+1} \gamma_{k+1}^{-1}&  \left( 1- \left(1- \tau_{k+1} \right)\frac{\tau_{k} \gamma_{k}^{-1}}{\tau_{k+1} \gamma_{k+1}^{-1}} \right)  \left\| \tilde{\theta}_{k} - \theta \right\| \\
=& O \left( \sum_{k=1}^{n} \kappa_{n,k} \tau_{k+1} \frac{(\ln k)^{1+\delta}}{k^{1-\gamma /2}} \right) \quad a.s.
\end{align*}
Then, since $\left( \frac{(\ln n)^{1/2 +\delta}}{n^{1-\gamma /2}}\right)$ is in $\mathcal{G}\mathcal{S}\left( -1+ \gamma /2 \right)$ and applying Lemma \ref{lemmok}
\[
\sum_{k=1}^{n-1} \kappa_{n,k} \tau_{k+1} \gamma_{k+1}^{-1} \left( 1- \left(1- \tau_{k+1} \right)\frac{\tau_{k} \gamma_{k}^{-1}}{\tau_{k+1} \gamma_{k+1}^{-1}} \right) \left\| \tilde{\theta}_{k} - \theta \right\| = o \left( \frac{(\ln n)^{1/2+ \delta}}{n^{1- \gamma /2}} \right) \quad a.s
\]
so that 
\[
\sum_{k=1}^{n-1} \kappa_{n,k}\tau_{k+1}\frac{ \tilde{\theta}_{k} - \theta - \left( \tilde{\theta}_{k+1} - \theta \right)}{\gamma_{k+1}}  = o \left( \frac{(\ln n)^{1/2 + \delta}}{n^{1- \gamma /2}} \right) \quad a.s.
\]

\medskip

\noindent \textbf{Bounding $\sum_{k=0}^{n-1} \kappa_{n,k}\tau_{k+1}\overline{S}_{k}^{-1} \tilde{\xi}_{k+1}$ : } First, remark that this term can be written as
\[
\sum_{k=0}^{n-1} \kappa_{n,k}\tau_{k+1}\overline{S}_{k}^{-1} \tilde{\xi}_{k+1} = \kappa_{n,0} \underbrace{\sum_{k=0}^{n-1}\prod_{j=1}^{k} \left( 1- \tau_{j} \right)^{-1} \tau_{k+1} \overline{S}_{k}^{-1} \tilde{\xi}_{k+1}}_{\overline{M}_{n}}
\]
where $\left( \overline{M}_{n} \right)$ is a martingale term with respect to the filtration $\left( \mathcal{F}_{n} \right) $, and 
\[
\left\langle M \right\rangle_{n} = \sum_{k=0}^{n-1} \left( \prod_{j=1}^{k} \left( 1- \tau_{j} \right)^{-2} \right) \tau_{k+1}^{2} \overline{S}_{k}^{-1} \mathbb{E}\left[ \tilde{\xi}_{k+1} \tilde{\xi}_{k+1}^{T} |\mathcal{F}_{k} \right] \overline{S}_{k}^{-1} ,
\]
which can be written as
\begin{align*}
\left\langle M \right\rangle_{n} & =  \sum_{k=0}^{n-1} \left( \prod_{j=1}^{k} \left( 1- \tau_{j} \right)^{-2} \right) \tau_{k+1}^{2} \overline{S}_{k}^{-1} \mathbb{E}\left[ \nabla_{h} g \left( X_{k+1},\tilde{\theta}_{k} \right)\nabla_{h} g \left( X_{k+1},\tilde{\theta}_{k} \right)^{T} |\mathcal{F}_{k} \right] \overline{S}_{k}^{-1} \\
& - \sum_{k=0}^{n-1} \left( \prod_{j=1}^{k} \left( 1- \tau_{j} \right)^{-2} \right) \tau_{k+1}^{2} \overline{S}_{k}^{-1}  \nabla G \left( \tilde{\theta}_{k} \right) \nabla G \left( \tilde{\theta}_{k} \right)^{T} \overline{S}_{k}^{-1}
\end{align*}
Since $\nabla G \left( \tilde{\theta}_{n}\right)$ and $\overline{S}_{n}^{-1}$ converge almost surely respectively to $0$ and $H^{-1}$ and applying Lemma \ref{lemmok} (third equality),
\[
\kappa_{n,0}^{2}\tau_{n}^{-1} \left\| \sum_{k=0}^{n-1} \left( \prod_{j=1}^{k} \left( 1- \tau_{j} \right)^{-2} \right) \tau_{k+1}^{2} \overline{S}_{k}^{-1}  \nabla G \left( \tilde{\theta}_{k} \right) \nabla G \left( \tilde{\theta}_{k} \right)^{T} \overline{S}_{k}^{-1} \right\| \xrightarrow[n\to + \infty]{a.s} 0 .
\]
Furthermore, since $\tilde{\theta}_{k}$ converges almost surely to $\theta$, since $\overline{S}_{k}^{-1}$ converges almost surely to $H^{-1}$ and thanks to assumption \ref{(A1c)}, there is a sequence of random matrices $R_{n}$ converging to $0$ such that
\begin{align*}
\overline{S}_{k}^{-1}&\mathbb{E}\left[ \nabla_{h} g \left( X_{k+1},\tilde{\theta}_{k} \right)\nabla_{h} g \left( X_{k+1},\tilde{\theta}_{k} \right)^{T} |\mathcal{F}_{k} \right] \overline{S}_{k}^{-1} \\
&= H^{-1}\underbrace{\mathbb{E}\left[ \nabla_{h} g \left( X, \theta \right) \nabla_{h}g \left( X , \theta \right)^{T} \right]}_{:= \Sigma} H^{-1} + R_{k} .
\end{align*}
Applying Lemma \ref{lemmok} (third equality),
\[
\kappa_{n,0}^{2} \tau_{n}^{-1} \left\| \sum_{k=0}^{n-1} \left( \prod_{j=1}^{k} \left( 1- \tau_{j} \right)^{-2} \right) \tau_{k+1}^{2} R_{k} \right\| \xrightarrow[n\to + \infty]{a.s} 0 .
\]
Finally, applying Lemma \ref{lemmok} (second equality),
\[
\kappa_{n,0}^{2}\tau_{n}^{-1} \sum_{k=0}^{n-1} \prod_{j=1}^{k} \left( 1- \tau_{j} \right)^{-2} \tau_{k+1}^{2} H^{-1}\Sigma H^{-1} \xrightarrow[n\to + \infty]{} \frac{\tau}{2\tau -1}H^{-1}\Sigma H^{-1},
\]
and then,
\[
\kappa_{n,0}^{2}\tau_{n}^{-1} \left\langle M \right\rangle_{n} \xrightarrow[n\to + \infty]{p.s} H^{-1}\Sigma H^{-1}.
\]
Then, thanks to \eqref{momenteta} and with the help of the law of large numbers for martingales \citep{Duf97}, it comes
\[
\left\| M_{n} \right\|^{2} = O \left( \ln (n) \frac{\tau_{n}}{\kappa_{n,0}^{2}} \right) \quad a.s
\]
which can be written, since $n\tau_{n}$ converges to $\tau$, as
\[
\left\| \kappa_{n,0} M_{n} \right\|^{2} = O \left( \frac{\ln n}{n} \right) \quad a.s.
\]
Furthermore, thanks to \eqref{momenteta}, the Lindeberg condition for the Central Limit Theorem for martingales is verified. Indeed, by Hölder's inequality, for all $\epsilon > 0$,
\begin{align*}
& L_{n}:= \kappa_{n,0}^{2}\tau_{n}^{-1} \sum_{k=1}^{n-1} \prod_{j=1}^{k}\kappa_{k,0}^{-2} \tau_{k+1}^{2} \mathbb{E}\left[ \left\| \overline{S}_{k}^{-1} \tilde{\xi}_{k+1} \right\|^{2}\mathbf{1}_{\kappa_{k,0}^{-1} \tau_{k+1} \left\| \overline{S}_{k}^{-1} \tilde{\xi}_{k+1} \right\| \geq \epsilon \kappa_{n,0}^{-1}\tau_{n}^{1/2}}  \Big| \mathcal{F}_{k} \right] \\
& \leq  \frac{\kappa_{n,0}^{2}}{\tau_{n}}\sum_{k=1}^{n-1} \frac{\tau_{k+1}^{2}}{\kappa_{k,0}^{2}} \left\| \overline{S}_{k}^{-1} \right\|_{op}^{2} \left( \mathbb{E}\left[ \left\|  \tilde{\xi}_{k+1}  \right\|^{2+2\eta} |\mathcal{F}_{k} \right] \right)^{\frac{1}{1+\eta}} \left( \mathbb{P} \left[\frac{\tau_{k+1}}{ \kappa_{k,0}}\left\| \overline{S}_{k}^{-1} \tilde{\xi}_{k+1} \right\| \geq \frac{\epsilon \sqrt{\tau_{n}}}{\kappa_{n,0}} |\mathcal{F}_{k} \right]  \right)^{\frac{\eta}{1+\eta}} .
\end{align*}
Then, applying Markov inequality,
\begin{align*}
L_{n} & \leq \frac{\kappa_{n,0}^{2}}{\tau_{n}}\sum_{k=0}^{n-1} \frac{\tau_{k+1}^{2}}{\kappa_{k,0}^{2}} \left\| \overline{S}_{k}^{-1} \right\|_{op}^{2} \left( \mathbb{E}\left[ \left\|  \tilde{\xi}_{k+1} \right\|^{2+2\eta} |\mathcal{F}_{k} \right] \right)^{\frac{1}{1+\eta}} \left\| \overline{S}_{k}^{-1} \right\|_{op}^{2\eta} \left( \mathbb{E}\left[ \left\| \tilde{\xi}_{k+1} \right\|^{2+2\eta} |\mathcal{F}_{k} \right] \right)^{\frac{\eta}{1+\eta}} \frac{\kappa_{n,0}^{2\eta}\tau_{k+1}^{2\eta}}{\epsilon^{2}\tau_{n}^{\eta}\kappa_{k,0}^{2\eta}} \\
& = \frac{1}{\epsilon^{2}} \frac{\kappa_{n,0}^{2+2\eta}}{\tau_{n}^{1+\eta}}\sum_{k=0}^{n-1} \frac{\tau_{k+1}^{2+2\eta}}{\kappa_{k,0}^{2+2\eta}} \left\| \overline{S}_{k}^{-1} \right\|_{op}^{2+2\eta} \mathbb{E}\left[ \left\| \tilde{\xi}_{k+1} \right\|^{2+2\eta} |\mathcal{F}_{k} \right] .
\end{align*}
Furthermore, thanks to Theorem \ref{theothetatilde}, inequality \eqref{momenteta} and {since there $\overline{S}_{n}$ converges almost surely to $H$}, there is a positive random variable $B$ such that $\left\| \overline{S}_{k}^{-1} \right\|_{op}^{2+2\eta} \mathbb{E}\left[ \left\| \tilde{\xi}_{k+1} \right\|^{2+2\eta} |\mathcal{F}_{k} \right] \leq B$ so that
\[
L_{n} \leq \frac{B}{\epsilon^{2}} \frac{\kappa_{n,0}^{2+2\eta}}{\tau_{n}^{1+\eta}}\sum_{k=0}^{n-1} \frac{\tau_{k+1}^{2+2\eta}}{\kappa_{k,0}^{2+2\eta}} 
\]
and applying Lemma \ref{lemmok}, this term converges to $0$. Then the Lindeberg condition is verified and it comes
\[
\kappa_{n,0}\tau_{n}^{-1/2} M_{n} \xrightarrow[n\to + \infty]{\mathcal{L}} \mathcal{N}\left( 0 , \frac{\tau}{2\tau-1}H^{-1}\Sigma H^{-1} \right),
\]
since $n\tau_{n}$ converges to $\tau$, this can be written as
\[
\sqrt{n} \kappa_{n,0} M_{n} \xrightarrow[n\to + \infty]{\mathcal{L}} \mathcal{N}\left( 0 , \frac{\tau^{2}}{2\tau -1} H^{-1}\Sigma H^{-1} \right) ,
\]
which concludes the proof.

\subsection{Proof of Theorem \ref{jenaipleinle}}
\label{app:proof_thm_jenaipleinle}

Let us recall that $\tilde{\theta}_{n+1}$ can be written as
\[
\tilde{\theta}_{n+1} - \theta = \tilde{\theta}_{n} - \theta - \gamma_{n+1}\overline{S}_{n}^{-1}\nabla G \left( \tilde{\theta}_{n} \right) + \gamma_{n+1}\overline{S}_{n}^{-1} \tilde{\xi}_{n+1}.
\]
Then, linearizing the gradient,
\[
\tilde{\theta}_{n+1} - \theta = \left( I_{d} - \gamma_{n+1}\overline{S}_{n}^{-1} H \right) \left(\tilde{\theta}_{n} - \theta\right) - \gamma_{n+1}\overline{S}_{n}^{-1} \tilde{\delta}_{n} + \gamma_{n+1} \overline{S}_{n}^{-1} \tilde{\xi}_{n+1},
\]
which can be written as
\begin{equation}
\tilde{\theta}_{n} - \theta = H^{-1}\overline{S}_{n}\frac{\left( \tilde{\theta}_{n} - \theta \right) - \left( \tilde{\theta}_{n+1} - \theta \right)}{\gamma_{n+1}} + H^{-1} \tilde{\delta}_{n} + H^{-1} \tilde{\xi}_{n+1}
\end{equation}
Then, thanks to decomposition \eqref{decgenmoy},
\begin{align}
\notag \theta_{n,\tau} - \theta  =&  \kappa_{n,0}\left( \theta_{0,\tau} - \theta \right) +H^{-1}\sum_{k=0}^{n-1} \kappa_{n,k}\tau_{k+1}\overline{S}_{k}\frac{\left( \tilde{\theta}_{k} - \theta \right) - \left( \tilde{\theta}_{k+1} - \theta \right)}{\gamma_{k+1}} \\
\label{decdegrosporc}&  - H^{-1}\sum_{k=0}^{n-1} \kappa_{n,k}\tau_{k+1} \tilde{\delta}_{k} + H^{-1}\sum_{k=0}^{n-1} \kappa_{n,k} \tau_{k+1} \tilde{\xi}_{k+1} . 
\end{align}
Note that the rate of convergence of the first and third terms on the right-hand side of previous equality are given in the proof of Theorem \ref{theomoy}. For the martingale term, with analogous calculus as the ones in the proof of Theorem \ref{theomoy}, one can check that
\[
\left\| H^{-1}\sum_{k=0}^{n-1} \kappa_{n,k} \tau_{k+1} \tilde{\xi}_{k+1}  \right\|^{2} = O \left( \frac{\ln n}{n} \right) \quad a.s 
\]
and
\[ \sqrt{n} \left( H^{-1}\sum_{k=0}^{n-1} \kappa_{n,k} \tau_{k+1} \tilde{\xi}_{k+1}  \right) \xrightarrow[n\to + \infty]{\mathcal{L}} \mathbb{N}\left( 0 , \frac{\tau^{2}}{2\tau -1}H^{-1}\Sigma H^{-1} \right).
\]
In order to conclude the proof, a rate of convergence for the second term on the right-hand side of equality \eqref{decdegrosporc} remains to be given. Applying an Abel's transform,
\begin{align*}
\sum_{k=0}^{n-1} \kappa_{n,k}\tau_{k+1}\overline{S}_{k}\frac{\left( \tilde{\theta}_{k} - \theta \right) - \left( \tilde{\theta}_{k+1} - \theta \right)}{\gamma_{k+1}} & = \frac{\kappa_{n,0} \tau_{1}}{\gamma_{1}}\overline{S}_{0}^{-1} \left( \tilde{\theta}_{0} - \theta \right) - \frac{\tau_{n}}{\gamma_{n}}\overline{S}_{n-1}^{-1} \left( \tilde{\theta}_{n} - \theta \right) \\
& - \underbrace{\sum_{k=1}^{n-1} \left( \kappa_{n,k-1} \frac{\tau_{k}}{\gamma_{k}}\overline{S}_{k-1} - \kappa_{n,k}\frac{\tau_{k+1}}{\gamma_{k+1}}\overline{S}_{k}\right) \left( \tilde{\theta}_{k} - \theta \right)}_{:=R_{n}}
\end{align*}
The rate of convergence of the two first term on the right hand side of previous equality are given (since $\overline{S}_{n}$ converges almost surely to $H^{-1}$) in the proof of Theorem \ref{theomoy}. Remark that since $\overline{S}_{k} = \overline{S}_{k-1} + \frac{1}{k+1}\left( \overline{u}_{k}\overline{\Phi}_{k}\overline{\Phi}_{k}^{T} - \overline{S}_{k-1} \right)$, $R_{n}$ can be rewritten as
\begin{align*}
R_{n} =& \underbrace{\sum_{k=1}^{n-1} \left( \kappa_{n,k-1} \frac{\tau_{k}}{\gamma_{k}} - \kappa_{n,k}\frac{\tau_{k+1}}{\gamma_{k+1}} \right)\overline{S}_{k-1} \left( \tilde{\theta}_{k} - \theta \right)}_{:= R_{1,n}} \\
&+ \underbrace{\sum_{k=1}^{n-1}\kappa_{n,k}\frac{\tau_{k+1}}{\gamma_{k+1}}\frac{1}{k+1} \left(  \overline{S}_{k-1} - \overline{u}_{k}\overline{\Phi}_{k}\overline{\Phi}_{k}^{T} + \frac{c_{\beta}}{k^{\beta}}Z_{k}Z_{k}^{T} \right) \left( \tilde{\theta}_{k} - \theta \right) }_{:= R_{2,n}}
\end{align*}

\noindent\textbf{Rate of convergence of $R_{1,n}$: } First, since $\kappa_{n,k-1} = \left( 1- \tau_{k} \right) \kappa_{n,k}$,
\begin{align*}
R_{1,n} &= \sum_{k=1}^{n-1} \kappa_{n,k} \left( \left( 1- \tau_{k} \right) \frac{\tau_{k}}{\gamma_{k}} - \frac{\tau_{k+1}}{\gamma_{k+1}} \right) \overline{S}_{k-1} \left( \tilde{\theta}_{k} - \theta \right)  \\
& = \sum_{k=1}^{n-1} \kappa_{n,k} \frac{\tau_{k+1}}{\gamma_{k+1}} \left( \left( 1- \tau_{k} \right) \frac{\tau_{k}\gamma_{k+1}}{\tau_{k+1}\gamma_{k}}  -1 \right) \overline{S}_{k-1}^{-1} \left( \tilde{\theta}_{k} - \theta \right)   
\end{align*}
Given that 
\[
1- \left( 1- \tau_{n-1} \right) \frac{\tau_{n-1}\gamma_{n+1}}{\tau_{n}\gamma_{n}} = \frac{2 + \gamma}{n}+ o \left( \frac{1}{n}\right),
\]
and applying Lemma \ref{lemmok} coupled with Theorem \ref{theothetatilde}, it comes that for all $\delta > 0$,
\[
\left\| R_{1,n} \right\|=  o \left( \frac{(\ln n)^{1/2 + \delta}}{n^{1-\gamma/2}} \right) \quad a.s.
\]

\noindent\textbf{Rate of convergence of $R_{2,n}$: } Thanks to Theorem \ref{theothetatilde} coupled with lemma \ref{lemmok}, one can check that for all $\delta > 0$,
\[
\left\| \tilde{\theta}_{\tau,n} - \theta \right\| = o \left( \frac{(\ln n)^{1/2 + \delta}}{n^{\gamma /2}} \right) \quad a.s.
\]
Then, let us consider the sequence of events $\left( \Omega_{n} \right)$ defined for all $n \geq 0$ by \[
\Omega_{n} = \left\lbrace \left\| \tilde{\theta}_{ n} - \theta \right\| < (\ln (n)^{1/2+\delta}\gamma_{n+1}^{1/2},\left\| \theta_{\tau,n-1} - \theta \right\| < (\ln (n))^{1/2+\delta}\gamma_{n}^{1/2} \right\rbrace . 
\]
Remark that $\mathbf{1}_{\Omega_{n}^{C}}$ converges almost surely to $0$. Then, one can write $R_{2,n}$ as
\begin{align*}
R_{2,n} = &\sum_{k=1}^{n-1}\kappa_{n,k}\frac{\tau_{k+1}}{\gamma_{k+1}}\frac{1}{k+1}\overline{S}_{k-1} \left( \tilde{\theta}_{k} - \theta \right) \\
&- \sum_{k=1}^{n}\kappa_{n,k}\frac{\tau_{k+1}}{\gamma_{k+1}}\frac{1}{k+1} \left(  \overline{u}_{k}\overline{\Phi}_{k}\overline{\Phi}_{k}^{T} + \frac{c_{\beta}}{k^{\beta}}Z_{k}Z_{k}^{T} \right) \left( \tilde{\theta}_{k} - \theta \right) \mathbf{1}_{\Omega_{k}} \\
&  - \sum_{k=1}^{n-1}\kappa_{n,k}\frac{\tau_{k+1}}{\gamma_{k+1}}\frac{1}{k+1} \left(  \overline{u}_{k}\overline{\Phi}_{k}\overline{\Phi}_{k}^{T} + \frac{c_{\beta}}{k^{\beta}}Z_{k}Z_{k}^{T} \right) \left( \tilde{\theta}_{k} - \theta \right)\mathbf{1}_{\Omega_{k}^{C}} .
\end{align*}
Applying Lemma \ref{lemmok}, for all $\delta > 0$,
\[
\left\| \sum_{k=1}^{n-1}\kappa_{n,k}\frac{\tau_{k+1}}{\gamma_{k+1}}\frac{1}{k+1}\overline{S}_{k} \left( \tilde{\theta}_{k} - \theta \right) \right\| = o \left( \frac{(\ln n)^{1/2  + \delta}}{n^{1-\gamma/2}} \right) \quad a.s.
\]
Furthermore, remark that
\begin{align*}
\sum_{k=1}^{n-1} \kappa_{n,k}\frac{\tau_{k+1}}{\gamma_{k+1}}\frac{1}{k+1} 
& \left(  \overline{u}_{k}\overline{\Phi}_{k}\overline{\Phi}_{k}^{T} + \frac{c_{\beta}}{k^{\beta}}Z_{k}Z_{k}^{T} \right) \left( \tilde{\theta}_{k} - \theta \right)  \\
&= \kappa_{n,0}\sum_{k=1}^{n-1}\kappa_{k-1,0}^{-1}\frac{\tau_{k+1}}{\gamma_{k+1}}\frac{1}{k+1} \left(   \overline{u}_{k}\overline{\Phi}_{k}\overline{\Phi}_{k}^{T} + \frac{c_{\beta}}{k^{\beta}}Z_{k}Z_{k}^{T} \right) \left( \tilde{\theta}_{k} - \theta \right)
\end{align*}
Since $\mathbf{1}_{\Omega_{n}^{C}}$ converges almost surely to $0$,
\[
\sum_{k \geq 1} \kappa_{k-1,0}^{-1} \frac{\tau_{k+1}}{\gamma_{k+1}} \frac{1}{k+1} \left\| \overline{u}_{k} \overline{\Phi}_{k}\overline{\Phi}_{k}^{T} + \frac{c_{\beta}}{k^{\beta}}Z_{k}Z_{k}^{T} \right\| \left\| \tilde{\theta}_{k} - \theta \right\| \mathbf{1}_{\Omega_{k}^{C}} < + \infty \quad a.s
\]
and
\begin{align*}
\left\| \sum_{k=1}^{n-1}\kappa_{n,k}\frac{\tau_{k+1}}{\gamma_{k+1}}\frac{1}{k+1} \left(   \overline{u}_{k}\overline{\Phi}_{k}\overline{\Phi}_{k}^{T} + \frac{c_{\beta}}{k^{\beta}}Z_{k}Z_{k}^{T} \right) \left( \tilde{\theta}_{k} - \theta \right)\mathbf{1}_{\Omega_{k}^{C}} \right\| & = O \left( \kappa_{n,0} \right) \quad a.s \\& = O \left( \frac{1}{n^{\tau}} \right) \quad a.s,
\end{align*}
which is negligible since $\tau > 1/2$. Finally, let
\begin{align*}
R_{3,n} & = \sum_{k=1}^{n-1}\kappa_{n,k}\frac{\tau_{k+1}}{\gamma_{k+1}}\frac{1}{k+1} \left\|  \overline{u}_{k}\overline{\Phi}_{k}\overline{\Phi}_{k}^{T} + \frac{c_{\beta}}{k^{\beta}}Z_{k}Z_{k}^{T} \right\| \left\|  \tilde{\theta}_{k} - \theta \right\|\mathbf{1}_{\Omega_{k}} \\
& \leq \sum_{k=1}^{n-1}\kappa_{n,k}\frac{\tau_{k+1}}{\sqrt{\gamma_{k+1}}}\frac{1}{k+1}(\ln k)^{1/2 + \delta} \left\|  \overline{u}_{k}\overline{\Phi}_{k}\overline{\Phi}_{k}^{T} + \frac{c_{\beta}}{k^{\beta}}Z_{k}Z_{k}^{T} \right\|\mathbf{1}_{\Omega_{k}}  \\
& \leq \sum_{k=1}^{n-1}\kappa_{n,k}\frac{\tau_{k+1}}{\sqrt{\gamma_{k+1}}}\frac{1}{k+1}(\ln k)^{1/2 + \delta} \underbrace{\left\|  \overline{u}_{k}\overline{\Phi}_{k}\overline{\Phi}_{k}^{T} + \frac{c_{\beta}}{k^{\beta}}Z_{k}Z_{k}^{T} \right\|\mathbf{1}_{\Omega_{k-1}'} }_{:= \Delta_{k}}
\end{align*}
with $\Omega_{k-1}' = \left\lbrace \left\| \theta_{\tau,k-1} - \theta \right\| \leq (\ln (k))^{1/2+\delta}\gamma_{k} \right\rbrace$.
Then, $R_{3,n}$ can be written as
\[
R_{3,n} = \sum_{k=1}^{n-1}\kappa_{n,k}\frac{\tau_{k+1}}{\sqrt{\gamma_{k+1}}}\frac{1}{k+1}(\ln k)^{1/2 + \delta} \mathbb{E}\left[ \Delta_{k} |\mathcal{F}_{k-1} \right] + \sum_{k=1}^{n-1}\kappa_{n,k}\frac{\tau_{k+1}}{\sqrt{\gamma_{k+1}}}\frac{1}{k+1}(\ln k)^{1/2 + \delta}\Xi_{k}
\]
with $\Xi_{k} = \Delta_{k} - \mathbb{E}\left[ \Delta_{k} |\mathcal{F}_{k-1} \right]$. Remark that $\left( \Xi_{n+1} \right)$ is a sequence of martingale differences adapted to the filtration $\left( \mathcal{F}_{n} \right)$ defined for all $n$ by $\mathcal{F}_{n} = \sigma \left( \left( X_{1},Z_{1} \right) , \ldots , \left( X_{n}, Z_{n} \right) \right)$. Thanks to inequality \eqref{majinthess} coupled with lemma \ref{lemmok},
\[
\sum_{k=1}^{n-1}\kappa_{n,k}\frac{\tau_{k+1}}{\sqrt{\gamma_{k+1}}}\frac{1}{k+1}(\ln k)^{1/2 + \delta} \mathbb{E}\left[ \Delta_{k} |\mathcal{F}_{k-1} \right] = o \left( \frac{(\ln n)^{1/2+ \delta}}{n^{1- \gamma/2}} \right) \quad a.s.
\]
Let us now consider $\alpha \in (1/2 , \tau)$, and $V_{n} = n^{2\alpha} \left( \sum_{k=1}^{n-1}\kappa_{n,k}\frac{\tau_{k+1}}{\sqrt{\gamma_{k+1}}}\frac{1}{k+1}(\ln k)^{1/2 + \delta} \Xi_{k} \right)^{2}$. Then,
\begin{align*}
\mathbb{E}\left[ V_{n} |\mathcal{F}_{n-1} \right] = \left| 1- \tau_{n} \right|^{2} \left( \frac{n}{n-1} \right)^{2\alpha} V_{n-1} + n^{2\alpha}\frac{\tau_{n}^{2}}{\gamma_{n}} \frac{(\ln (n-1))^{1+ \delta}}{n^{2}} \mathbb{E}\left[ \Delta_{n}^{2} |\mathcal{F}_{n-1} \right]
\end{align*}
Since 
\[
\left| 1- \tau_{n} \right|^{2} \left( \frac{n}{n-1} \right)^{2\alpha} = 1 - 2\frac{\tau - \alpha}{n} + o \left( \frac{1}{n} \right), 
\]
thanks to inequality \eqref{sumfini} and applying Robbins-Siegmund Theorem,
\[
 \left( \sum_{k=1}^{n-1}\kappa_{n,k}\frac{\tau_{k+1}}{\sqrt{\gamma_{k+1}}}\frac{1}{k+1}(\ln k)^{1/2 + \delta} \Xi_{k} \right)^{2} =  O\left( \frac{1}{n^{2\alpha}} \right) \quad a.s.
\]
which concludes the proof.

\subsection{Proof of Theorem \ref{theo:ASNlog}}
\label{app:proof_ASNlog}
\hfill \\

\noindent\textbf{Verifying \ref{(A1)}. } First, remark that for all $h \in \mathbb{R}^{d}$, $\left\| \nabla_{h} g (X, Y, h) \right\| \leq X$. Then, since $X$ admits a second order moment, Assumption \ref{(A1b)} is verified. Furthermore, we have
\[
\nabla G (\theta) = \mathbb{E}\left[ \nabla_{h} g (X,Y,\theta) \right] = \mathbb{E}\left[ \pi \left( \theta^{T}X \right) - Y \right] = \mathbb{E}\left[ \pi \left( \theta^{T}X \right) - \mathbb{E}[ Y | X ] \right] = 0
\]
and \ref{(A1a)} is so verified. Furthermore, since $X$ admits a second order moment and since the functional $\pi$ is continuous, the functional
\[
\Sigma : h \longmapsto \mathbb{E}\left[ \nabla_{h}g (X,Y,h) \nabla_{h}g(X,Y,h)^{T} \right] = \mathbb{E}\left[ \left( Y - \pi \left( X^{T}h \right) \right)^{2} XX^{T} \right]
\]
is continuous on $\mathbb{R}^{d}$, and in particular at $\theta$. Then, \ref{(A1c)} is verified.

\medskip

\noindent\textbf{Verifying \ref{(A2a)} and \ref{(A2b)}. } First, remark that \ref{(A2a)} is verified thanks to hypothesis. Furthermore, note that for all $h \in \mathbb{R}^{d}$, 
\[
\nabla^{2}G (h) = \mathbb{E}\left[ \pi \left( h^{T}X \right) \left( 1- \pi \left( h^{T}X \right) \right) XX^{T} \right]
\]
and by continuity of $\pi$ and since $X$ admits a second order moment, $G$ is twice continuously differentiable. Furthermore,  $\left\| \nabla^{2}G(h) \right\|_{op} \leq \frac{1}{4} \mathbb{E}\left[ \left\| X \right\|^{2} \right]$ and \ref{(A2a)} is so verified.

\medskip

\noindent\textbf{Verifying  {\ref{(H1)}}. }Only the mains ideas are given since a detailed analogous proof is available in \cite{BGBP2019} (proof of Theorem 4.1). Remark that thanks to Riccati's formula, we have
\[
\overline{S}_{n} = \frac{1}{n+1}\left( S_{0} + \sum_{k=1}^{n}a_{k}X_{k}X_{k}^{T} \right) 
\]
and that by definition, $a_{k} \geq \frac{c_{\beta}}{k^{\beta}}$. Then, 
$\lambda_{\min} \left( \overline{S}_{n} \right) \geq \frac{1}{n+1}\lambda_{\min} \left(\sum_{k=1}^{n} \frac{c_{\beta}}{k^{\beta}}   X_{k}X_{k}^{T} \right)$,
and one can easily check that
\begin{equation}\label{convaksn}
\frac{1}{\sum_{k=1}^{n}\frac{c_{\beta}}{k^{\beta}}}\sum_{k=1}^{n} \frac{c_{\beta}}{k^{\beta}} X_{k}X_{k}^{T} \xrightarrow[n\to + \infty]{p.s} \mathbb{E}\left[ XX^{T} \right]
\end{equation}
which is supposed to be positive (since $\nabla^{2}G(\theta)$ is). Then, one can easily check that
\[
\lambda_{\max} \left( \overline{S}_{n}^{-1} \right) = O \left( n^{ \beta} \right) \quad a.s.
\] 
In a same way, since $a_{k} \leq \frac{1}{4}$, one can easily check that
\[
\lambda_{\max} \left( \overline{S}_{n}^{-1} \right) = O \left( 1 \right) \quad a.s
\]
and  {\ref{(H1)}} is so verified. 

\medskip

\noindent\textbf{Conclusion 1. } Since Assumptions \ref{(A1)}, \ref{(A2a)}, \ref{(A2b)} as well as  {\ref{(H1)}} are verified, Theorem \ref{theothetanbar} holds, i.e $\tilde{\theta}_{n}$ and $\theta_{\tau,n}$ converge almost surely to $\theta$. 

\medskip

\noindent\textbf{Verifying  {the strong consistency of $\overline{S}_{n}$} }Only the mains ideas are given since a detailed analogous proof is available in \cite{BGBP2019} (proof of Theorem 4.1). Remark that $\overline{S}_{n}$ can be written as
\[
\overline{S}_{n} = \frac{1}{n+1} \left( \overline{S}_{0} + \sum_{k=1}^{n} \overline{a}_{k}X_{k}X_{k}^{T} + \sum_{k=1}^{n} \left( a_{k} - \overline{a}_{k} \right) X_{k}X_{k}^{T} \right) 
\] 
and since $a_{k} - \overline{a}_{k} \neq 0$ if and only if $a_{k} > \overline{a}_{k}$, it comes thanks to equation \eqref{convaksn},
\begin{align*}
\left\| \frac{1}{n+1}\overline{S}_{0} + \frac{1}{n+1}\sum_{k=1}^{n}a_{k}X_{k}X_{k}^{T} \right\|_{op} &=  \frac{1}{n+1}\left\| \overline{S}_{0} \right\|_{op} + \frac{1}{n+1}\left\| \sum_{k=1}^{n} \frac{c_{\beta}}{k^{\beta}}X_{k}X_{k}^{T} \right\|_{op} \\
&= O \left( n^{- \beta} \right) \quad a.s.
\end{align*}
Furthermore, as in the proof of Theorem 4.1 in \cite{BGBP2019}, one can check, since $\theta_{\tau, n}$ converges to $\theta$,   that
\[
\frac{1}{n}\sum_{k=1}^{n}\overline{a}_{k} X_{k}X_{k}^{T} = \frac{1}{n}\sum_{k=1}^{n} \pi \left( \theta_{\tau,k-1}^{T} X_{k}\right) \left( 1- \pi \left( \theta_{\tau,k-1}^{T} X_{k}\right) \right) X_{k}X_{k}^{T} \xrightarrow[n\to + \infty]{a.s} \nabla^{2}G(\theta) .
\]
and the consistency of $\overline{S}_{n}$ is therefore verified.

\medskip

\noindent\textbf{Conclusion 2.} Since Assumptions \ref{(A1)}, \ref{(A2a)}, \ref{(A2b)}, { and \ref{(H1)} as well as the consistency of $\overline{S}_{n}$ are verified},  if $X$ admits a moment of order $2+2\eta$ with $\eta > \frac{1}{\alpha}-1$, Theorem \ref{theothetatilde} holds, i.e
\[
\left\| \tilde{\theta}_{n} - \theta \right\|^{2} = O \left( \frac{\ln n}{n^{\gamma}} \right) \quad a.s .
\]

\medskip

\noindent\textbf{Verifying \ref{(A2c)}. } Thanks to Lemma 6.2 in \cite{BGBP2019}, we have for all $h_{1},h_{2} \in \mathbb{R}^{d}$, 
\[
\left| \pi \left( h_{1}^{T}X \right) \left( 1- \pi  \left( h_{1}^{T}X \right) \right) -\pi \left( h_{2}^{T}X \right) \left( 1- \pi  \left( h_{2}^{T}X \right) \right) \right| \leq \frac{1}{12\sqrt{3}} \left\| X \right\| \left\| h_{1} - h_{2} \right\|
\]
and in a particular case, it comes
\[
\left\| \nabla^{2}G\left( h_{1} \right) - \nabla^{2}G\left( h_{2} \right) \right\|_{op} \leq \frac{1}{12\sqrt{3}}\mathbb{E}\left[ \left\| X \right\|^{3} \right] \left\| h_{1 } - h_{2} \right\|
\]
and \ref{(A2c)} is verified since $X$ admits a third order moment. 

\medskip

\noindent\textbf{Verifying inequalities \eqref{majinthess} and \eqref{sumfini}. } Remark that for all $n \geq 1$, 
\[
\left\| a_{n} X_{n}X_{n}^{T} \right\| \leq \max \left\lbrace \frac{1}{4} , c_{\beta} \right\rbrace  \left\| X \right\|^{2} =: C_{a,\beta} \| X \|^{2}  
\]	
so that, if $X$ admits a fourth order moment,
\[
\mathbb{E}\left[ \left\| a_{n} X_{n}X_{n}^{T} \right\| \right] \leq C_{a,\beta}\mathbb{E}\left[ \left\| X \right\|^{2} \right] \quad \quad \text{and} \quad \quad \mathbb{E}\left[ \left\| a_{n} X_{n}X_{n}^{T} \right\|^{2} \right] \leq C_{a,\beta}^{2} \mathbb{E}\left[ \left\| X \right\|^{4} \right] 
\]
and inequalities \eqref{majinthess} and \eqref{sumfini} are so verified.

\medskip

\noindent\textbf{Conclusion 3. } Theorem \ref{jenaipleinle} holds, meaning that
\[
\left\| \theta_{\tau,n} - \theta \right\|^{2} = O \left( \frac{\ln n}{n} \right) \quad a.s 
 \quad \quad \text{and} \quad \quad 
\sqrt{n} \left( \theta_{\tau, n } - \theta \right) \xrightarrow[n\to + \infty]{\mathcal{L}} \mathcal{N}\left( 0 , H^{-1} \right) .
\]

\noindent\textbf{Convergence of $\overline{S}_{n}$. } First, let us recall 
\[
\overline{S}_{n} = \frac{1}{n+1} \left( \overline{S}_{0} + \underbrace{\sum_{k=1}^{n} \overline{a}_{k}X_{k}X_{k}^{T}}_{A_{n}} + \sum_{k=1}^{n} \left( a_{k} - \overline{a}_{k} \right) X_{k}X_{k}^{T} \right) 
\]
and that
\[
\left\| \frac{1}{n+1} \left( \overline{S}_{0} + \sum_{k=1}^{n} \left( a_{k} - \overline{a}_{k} \right) X_{k}X_{k}^{T} \right) \right\|^{2} = O \left( \frac{1}{n^{2\beta}} \right) \quad a.s.
\]
Furthermore, let us split $A_{n}$ into two terms, i.e
\[
A_{n} =\sum_{k=1}^{n} \nabla^{2}G \left( \theta_{\tau,k-1} \right) + \sum_{k=1}^{n} \Xi_{k}
\]
with $\Xi_{k} = \overline{a}_{k}X_{k}X_{k}^{T} - \nabla^{2}G \left( \theta_{\tau,k-1} \right) $. Since $\left( \xi_{n} \right)$ is a sequence of martingale differences adapted to the filtration $\left( \mathcal{F}_{n} \right)$ and since $\mathbb{E}\left[ \left\| \Xi_{k} \right\|^{2} |\mathcal{F}_{k-1} \right] \leq \frac{1}{16}\mathbb{E}\left[ \left\| X \right\|^{4} \right]$, we have (see Theorem 6.2 in \cite{CGBP2020}) for all $\delta > 0$
\[
\left\| \frac{1}{n+1}\sum_{k=1}^{n} \Xi_{k} \right\|^{2} = o \left( \frac{(\ln n)^{1+\delta}}{n} \right) \quad a.s.
\]
Furthermore, since $X$ admits a third order moment, the Hessian is $\frac{1}{12\sqrt{3}}\mathbb{E}\left[ \| X \|^{3} \right]$-lipschitz and for all $\delta > 0$, by Toeplitz Lemma,
\begin{align*}
\left\| \frac{1}{n}\sum_{k=1}^{n} \nabla^{2}G \left( \theta_{\tau,k-1} \right) - \nabla^{2}G(\theta) \right\| & \leq \frac{\mathbb{E}\left[ \| X \|^{3} \right]}{12\sqrt{3}n}\sum_{k=1}^{n} \left\| \theta_{\tau,k-1} - \tau_{k} \right\| \\
&= o \left( \frac{(\ln n)^{1/2+\delta/2}}{\sqrt{n}} \right) \quad a.s,
\end{align*}
which concludes the proof.

\subsection{Proofs of Theorems \ref{theo:ASNsoft} and \ref{ASNsoft}}
In what follows, let us recall that we consider the functional $G : \mathbb{R}^{p} \times \ldots \times \mathbb{R}^{p} \longrightarrow \mathbb{R}$ defined for all $h = \left( h_{1} , \ldots , h_{K} \right)$ by
\[
G(h) := \mathbb{E}\left[ \log \left( \frac{e^{h_{Y}^{T}X}}{ \sum_{k=1}^{K}e^{h_{k}^{T}X}} \right) \right]
\]

\subsubsection{Some results on the functional $G$}
\noindent\textbf{Verifying assumption \ref{(A1)}.} First remark that  
\[
\left\| \nabla_{h} g \left( X,Y,h \right) \right\| \leq \sqrt{K}\left\| X \right\| .
\]
and if $X$ admits a second order moment, \ref{(A1b)} is verified.
Furthermore it is evident that \ref{(A1a)} is verified. Indeed, we have
\[
\nabla G (\theta) = \mathbb{E}\left[ \mathbb{E}\left[ \nabla_{h} g (X,Y,\theta) | X \right] \right] = \mathbb{E}\left[ \begin{pmatrix}
X \left( \frac{e^{\theta_{1}^{T}X}}{\sum_{k=1}^{K}e^{\theta_{k}^{T}X}} - \mathbb{E}\left[ \mathbf{1}_{Y=1} | X \right] \right) \\
\vdots \\
X \left( \frac{e^{\theta_{K}^{T}X}}{\sum_{k=1}^{K}e^{\theta_{k}^{T}X}} - \mathbb{E}\left[ \mathbf{1}_{Y=K} | X \right] \right)
\end{pmatrix} \right] = 0 .
\]

Furthermore, for all $h$,
\begin{align}
\notag\mathbb{E} & \left[ \nabla_{h} g (X,Y , h )\nabla_{h}g \left( X,Y,h \right)^{T} \right] - \mathbb{E}\left[ \nabla_{h} g (X,Y , \theta )\nabla_{h}g \left( X,Y,\theta \right)^{T} \right]  \\
\label{maremaremare}& = \mathbb{E}\left[ \left( \sigma (X , h ) - \sigma (X,\theta)\right)\left( \sigma (X , h ) - \sigma (X,\theta)\right)^{T} \otimes XX^{T} \right]  .
\end{align}
and since the functional $\sigma$ is bounded and continuous, by dominated convergence, since $X$ admits a second order moment, \ref{(A1c)} is verified.

\medskip

\noindent\textbf{Verifying assumption \ref{(A2)}.} First, remark that for all $h$, since $\left\| \sigma (.,.) \right\|$ is bounded by $\sqrt{K}$,
\begin{align*}
\left\| \nabla^{2} G(h) \right\|_{op} \leq \mathbb{E}\left[ \left\| X \right\|^{2} \left\|\sigma \left( X , h \right) \right\| \right] \leq \sqrt{K}\mathbb{E}\left[ \left\| X \right\|^{2} \right] .
\end{align*}
Then, if $X$ admits a second order moment, assumption \ref{(A2a)} is verified. Furthermore, \ref{(A2b)} is verified by hypothesis. Finally, let us consider the functional $F_{k'} : [0,1] \longrightarrow \mathbb{R}$ defined for all $t$ by
\[
F_{k'}(t) = \frac{e^{\left(\theta_{k'}+t\left(h_{k'} - \theta_{k'}\right)\right)^{T}X}}{\sum_{k=1}^{K}e^{\left( \theta_{k} + t \left( h_{k} - \theta_{k} \right) \right)^{T}X}}
\]
Then, for all $t \in [0,1]$,
\[
F'(t) = \frac{\left(h_{k'} - \theta_{k'} \right)^{T}Xe^{\left(\theta_{k'}+t\left(h_{k'} - \theta_{k'}\right)\right)^{T}X}}{\sum_{k=1}^{K}e^{\left( \theta_{k} + t \left( h_{k} - \theta_{k} \right) \right)^{T}X}} - \underbrace{ \frac{e^{\left( \theta_{k'} +  \left( h_{k'} - \theta_{k'} \right) \right)^{T}X} \sum_{k=1}^{K}t\left( h_{k} - \theta_{k} \right)^{T}Xe^{\left( \theta_{k} + t \left( h_{k} - \theta_{k} \right) \right)^{T}X}}{\left( \sum_{k=1}^{K}e^{\left( \theta_{k} + t \left( h_{k} - \theta_{k} \right) \right)^{T}X} \right)^{2}}}_{(*)}
\]
First, one can check that for all $t \in [0,1]$,
\[
\left| \frac{\left(h_{k'} - \theta_{k'} \right)^{T}Xe^{\left(\theta_{k'}+t\left(h_{k'} - \theta_{k'}\right)\right)^{T}X}}{\sum_{k=1}^{K}e^{\left( \theta_{k} + t \left( h_{k} - \theta_{k} \right) \right)^{T}X}} \right| \leq \left\| h_{k'} - \theta_{k'} \right\| \| X \| .
\]
Furthermore, applying Cauchy-Schwarz inequality,
\begin{align*}
(*) & \leq \left\| X \right\| \frac{e^{\left( \theta_{k'} + t \left( h_{k'} - \theta_{k'} \right) \right)^{T}X}}{\left( \sum_{k=1}^{K}e^{\left( \theta_{k} + t \left( h_{k} - \theta_{k} \right) \right)^{T}X} \right)^{2}}\sqrt{\sum_{k=1}^{K}\left\| h_{k} - \theta_{k} \right\|^{2}} \sqrt{\sum_{k=1}^{K}e^{2\left( \theta_{k} + t \left( h_{k} - \theta_{k} \right) \right)^{T}X}} \\
&\leq   \| X \| \left\| \theta - h \right\| .
\end{align*}
Then,  
\begin{equation}
\label{lipver} \left| \frac{e^{\theta_{k'}^{T}X}}{\sum_{k=1}^{K}e^{\theta_{k}^{T}X}} - \frac{e^{h_{k'}^{T}X}}{\sum_{k=1}^{K}e^{h_{k}^{T}X}} \right| \leq  \left\| X \right\| \left( \left\| h_{k'} - \theta_{k'} \right\| + \left\| \theta - h \right\| \right) .
\end{equation}
Then,
\begin{equation}\label{maremaremare2}
\left\| \sigma (X,h ) - \sigma(X,\theta) \right\| \leq 2\sqrt{K} \left\| X \right\| \left\| \theta - h \right\|  
\end{equation}
and
\[
\left\| \text{diag} \left( \sigma (X,\theta)\right) - \text{diag} \left( \sigma (X, h ) \right) \right\| \leq 2\sqrt{K} \left\| X \right\| \left\| \theta - h \right\|  
\]
Then,
\[
\left\| \nabla^{2}G(\theta) - \nabla^{2}G(h) \right\| \leq 6\sqrt{K}\mathbb{E}\left[ \left\| X \right\|^{3}\right] \left\| h - \theta \right\|.
\]
Finally, if $X$ admits a third order moment, the Hessian is $6\sqrt{K} \mathbb{E}\left[ \left\| X \right\|^{3} \right]$-Lipschitz and Assumption \ref{(A2c)} is thus verified.

\subsubsection{Proof of Theorems \ref{theo:ASNsoft} and \ref{ASNsoft}}
\label{proof:ASNsoft}

\begin{proof}[Proof of Theorem  \ref{theo:ASNsoft}] \hfill \\

\noindent\textbf{Verifying \ref{(H1)}. } Remark that
\[
\lambda_{\min}\left( H_{n} \right) \geq \lambda_{\min} \left( \sum_{k=1}^{n} \beta_{k}Z_{k}Z_{k}^{T} \right)
\]
and that, since $\beta\in (0,1/2)$,
\[
\frac{1- \beta}{c_{\beta}^{-1}n^{1-\beta}} \sum_{k=1}^{n} \beta_{k} Z_{k}Z_{k}^{T} \xrightarrow[n\to + \infty]{a.s} I_{pK} .
\]
Then,
\[
\left\| \overline{H}_{n}^{-1} \right\| = O \left( n^{\beta} \right) \quad a.s.
\]
Furthermore, since $\left\| \nabla_{h} g (X,Y,h) \right\| \leq \left\| X \right\|$, and {since $X$ admits a second order moment},
\[
\frac{1}{n+1}\left\| \sum_{k=1}^{n} \nabla_{h}g \left( X_{k} , Y_{k} , \theta_{k-1} \right)\nabla_{h}g \left( X_{k} , Y_{k} , \theta_{k-1} \right)^{T} \right\| \leq \frac{1}{n}\sum_{k=1}^{n} \left\| X_{k} \right\|^{2} \xrightarrow[n\to + \infty]{a.s} \mathbb{E}\left[ \left\| X \right\|^{2} \right]
\]
ans assumption \ref{(H1)} is so verified. 

\medskip

\noindent\textbf{Conclusion 1. }Since Assumptions \ref{(A1a)}, \ref{(A1b)},\ref{(A2a)}, \ref{(A2b)} and \ref{(H1)} are fulfilled, Theorem \ref{snaas} holds, i.e $\theta_{n}$ converges almost surely to $\theta$. 

\medskip

\noindent\textbf{Verifying \ref{(H2a)}. } First, let us rewrite $\overline{H}_{n}$ as
\[
\overline{H}_{n} = \frac{1}{n+1} \left( \overline{H}_{0}^{-1} + \underbrace{\sum_{k=1}^{n}\nabla_{h} g \left( X_{k} , Y_{k} , \theta_{k-1} \right) \nabla_{h}g \left( X_{k} , Y_{k} , \theta \right)^{T}}_{:= A_{n}} + \sum_{k=1}^{n} \frac{c_{\beta}}{k^{\beta}}Z_{k}Z_{k}^{T} \right) 
\]
and we have already proven that
\[
\left\| \frac{1}{n+1}  \left( \overline{H}_{0}^{-1} + \sum_{k=1}^{n} \frac{c_{\beta}}{k^{\beta}}Z_{k}Z_{k}^{T} \right) \right\|^{2} = O \left( \frac{1}{n^{2\beta}} \right) \quad p.s.
\]
Furthermore, one can rewrite $A_{n}$ as
\[
A_{n} = \sum_{k=1}^{n} \mathbb{E}\left[ \nabla_{h} g \left( X_{k} , Y_{k} , \theta_{k-1} \right) \nabla_{h}g \left( X_{k} , Y_{k} , \theta_{k-1} \right)^{T} |\mathcal{F}_{k-1} \right] + \sum_{k=1}^{n} \Xi_{k}
\]
with
\begin{align*}
\Xi_{k} :=& \nabla_{h} g \left( X_{k} , Y_{k} , \theta_{k-1} \right) \nabla_{h}g \left( X_{k} , Y_{k} , \theta_{k-1} \right)^{T} \\
&- \mathbb{E}\left[ \nabla_{h} g \left( X_{k} , Y_{k} , \theta_{k-1} \right) \nabla_{h}g \left( X_{k} , Y_{k} , \theta_{k-1} \right)^{T} |`\mathcal{F}_{k-1} \right].    
\end{align*} 
First, note that if $X$ admits a fourth order moment, $\mathbb{E}\left[ \left\| \nabla_{h}g \left( X_{k} , Y_{k} , \theta_{k-1} \right) \right\|^{4} |\mathcal{F}_{k-1} \right] \leq K^{2} \mathbb{E}\left[ \left\| X \right\|^{4} \right] $, so that applying Theorem 6.2 in \cite{CGBP2020}, for all $\delta > 0 $,
\[
\left\| \frac{1}{n+1}\sum_{k=0}^{n} \Xi_{k} \right\|^{2} = o \left( \frac{(\ln n)^{1+\delta}}{n} \right) \quad a.s.
\]
Furthermore, we have proven that if $X$ admits a second order moment in the last subsection, \ref{(A1c)} is fulfilled. Then, since $\theta_{n}$ converges almost surely to $\theta$ and by continuity
\begin{align*}
\frac{1}{n+1}\sum_{k=1}^{n} \mathbb{E}\left[\nabla_{h}g \left( X_{k} , Y_{k} , \theta_{k-1} \right)\nabla_{h}g \left( X_{k} , Y_{k} , \theta_{k-1} \right)^{T} |\mathcal{F}_{k-1} \right] \\
\xrightarrow[n\to + \infty]{a.s} \mathbb{E}\left[ \nabla_{h} g \left( X, Y , \theta \right) \nabla_{h} g \left( X,Y, \theta \right) \right] = H,
\end{align*}
and Assumption \ref{(H2a)} is so fulfilled.

\medskip

\noindent\textbf{Conclusion 2. } If $X$ admits a second order moment, Assumptions \ref{(A1)}, \ref{(A2)}, \ref{(H1)}, \ref{(H2a)} are verified and Theorem \ref{theoratetheta} holds, i.e
\[
\left\| \theta_{n} - \theta \right\|^{2} = O \left( \frac{\ln n}{n} \right) \quad a.s.
\]

\medskip

\noindent\textbf{Verifying (H2b). } Note that thanks to inequalities \eqref{maremaremare} and \eqref{maremaremare2},
\begin{align*}
\bigg\| \mathbb{E}\big[ \nabla_{h}g \left( X, Y , h \right) \nabla_{h}g \left( X , Y , h \right)  &- \nabla_{h} g \left( X , Y , \theta \right) \nabla_{h} \left( X, Y ,\theta \right)^{T}  \big] \bigg\| \\
&  \leq \mathbb{E}\left[ \left\| \sigma (X , h ) - \sigma (X , \theta ) \right\|^{2} \left\| X \right\|^{2} \right] \\
& \leq 4K \mathbb{E}\left[ \left\| X \right\|^{4} \right] \left\| h- \theta \right\|^{2}.
\end{align*} 
Then, thanks to the Toeplitz lemma, it comes that for all $\delta > 0$,
\begin{align*}
\bigg\| \frac{1}{n+1}\sum_{k=1}^{n}\mathbb{E} \bigg[ \nabla_{h} g \left( X_{k} , Y_{k} , \theta_{k-1} \right) & \nabla_{h}g \left( X_{k} , Y_{k} , \theta_{k-1} \right)^{T}|\mathcal{F}_{k-1} \bigg] - H \bigg\| \\
&  \leq \frac{\left\| H   \right\|}{n+1} + \frac{4K\mathbb{E}\left[ \left\| X \right\|^{4} \right]}{n+1} \sum_{k=1}^{n} \left\| \theta_{k-1} - \theta \right\|^{2}  \\
& = o \left( \frac{(\ln n)^{1+\delta}}{n} \right) \quad a.s.
\end{align*}
Then, 
\[
\left\| \overline{H}_{n} - H \right\|^{2} = O \left( \frac{1}{n^{2\beta}} \right) \quad a.s,
\]
and \ref{(H2b)} is so verified.

\medskip

\noindent\textbf{Conclusion 3. } If $X$ admits a fourth order moment, Assumptions \ref{(A1)}, \ref{(A2)}, \ref{(H1)}, \ref{(H2a)} and \ref{(H2b)} are fulfilled, so that Theorem \ref{thetatlc} holds. Since 
\[
\mathbb{E}\left[ \nabla_{h} g \left( X, Y , \theta \right) \nabla_{h} g \left( X, Y ,\theta \right)^{T} \right] = H,
\]
we thus have
\[
\sqrt{n} \left( \theta_{n} - \theta \right) \xrightarrow[n \to + \infty]{\mathcal{L}} \mathcal{N}\left( 0 , H^{-1} \right) .
\]

\end{proof}

\begin{proof}[Proof of Theorem \ref{ASNsoft}] \hfill \\

\noindent\textbf{Verifying  {\ref{(H1)}} and conclusion 1. }If $X$ admits a second order moment, with the same calculus as in the proof of Theorem \ref{theo:ASNsoft}, up to take $\beta < \gamma  -1/2$ instead of $\beta < 1/2$, one can check that Assumption  {\ref{(H1)}} is verified  {with $\beta < \gamma -1/2$}. Then, $\tilde{\theta}_{n}$ and $\theta_{\tau,n}$ converge almost surely to $\theta$.

\medskip

\noindent\textbf{Verifying  { the consistency of $\overline{S}_{n}$} and conclusion 2. } If $X$ admits a fourth order moment, with the same calculus as in the proof of Theorem \ref{theo:ASNsoft}, up to take $\beta < \gamma  -1/2$ instead of $\beta < 1/2$, one can check that  { the strong consistency of $\overline{S}_{n}$} is verified. Then, 
\[
\left\| \tilde{\theta}_{n} - \theta \right\|^{2} = O \left( \frac{\ln n}{n^{\gamma}} \right) \quad a.s.
\]
Furthermore, let us recall that
\[
\theta_{\tau,n} - \theta  = \kappa_{n,0}\left( \theta_{0,\tau} - \theta \right) + \sum_{k=0}^{n-1}\kappa_{n,k} \tau_{k+1} \left( \tilde{\theta}_{k} - \theta \right)
\]
and that $\left| \kappa_{n,0} \right| = O \left( n^{-\tau} \right)$. Furthermore, applying Lemma \ref{lemmok}, for all $\delta > 0$,
\[
\sum_{k=0}^{n-1} \kappa_{n,k} \tau_{k+1} \left\| \tilde{\theta}_{k} - \theta \right\| = o \left( \frac{(\ln n)^{1/2+\delta/2}}{n^{\gamma /2}} \right) \quad a.s
\]
leading, since $\tau > 1/2$, to
\[
\left\| \theta_{\tau,n} - \theta \right\|^{2} = o \left( \frac{(\ln n)^{1+\delta}}{n^{\gamma}} \right) \quad a.s.
\] 

\medskip

\noindent\textbf{Verifying \ref{(H2b)}. } Note that by inequalities \eqref{maremaremare} and \eqref{maremaremare2},
\begin{align*}
\bigg\| \mathbb{E}\bigg[ \nabla_{h}g \left( X, Y , h \right) \nabla_{h}g \left( X , Y , h \right) & - \nabla_{h} g \left( X , Y , \theta \right) \nabla_{h} \left( X, Y,\theta \right)^{T}  \bigg] \bigg\| \\
&  \leq \mathbb{E}\left[ \left\| \sigma (X , h ) - \sigma (X , \theta ) \right\|^{2} \left\| X \right\|^{2} \right] \\
& \leq 4K \mathbb{E}\left[ \left\| X \right\|^{4} \right] \left\| h- \theta \right\|^{2}.
\end{align*} 
Then, thanks to the Toeplitz lemma, it comes that for all $\delta > 0$,
\begin{align*}
&  \left\| \frac{1}{n+1}\sum_{k=1}^{n}\mathbb{E}\left[ \nabla_{h} g \left( X_{k} , Y_{k} , \theta_{\tau,k-1} \right) \nabla_{h}g \left( X_{k} , Y_{k} , \theta_{\tau , k-1} \right)^{T}|\mathcal{F}_{k-1} \right] - H \right\| \\
&   \leq \frac{\left\| H   \right\|}{n+1} + \frac{4K\mathbb{E}\left[ \left\| X \right\|^{4} \right]}{n+1} \sum_{k=1}^{n} \left\| \theta_{\tau , k-1} - \theta \right\|^{2}  
 \\
& = o \left( \frac{(\ln n)^{1+\delta}}{n^{\gamma}} \right) \quad a.s.
\end{align*}
Then, since $\beta < \gamma - 1/2$,
\[
\left\| \overline{H}_{n} - H \right\|^{2} = O \left( \frac{1}{n^{2\beta}} \right) \quad a.s,
\]
and \ref{(H2b)} is so verified.

\medskip

\noindent\textbf{Conclusion 3. } If $X$ admits a fourth order moment, Assumptions \ref{(A1)}, \ref{(A2)}, { and \ref{(H2b)}} are fulfilled, and Theorem \ref{theomoy} holds, i.e
\[
\left\| \theta_{\tau,n} - \theta \right\| = O \left( \frac{\ln n}{n} \right) \quad a.s \quad \quad \text{and} \quad \quad \sqrt{n} \left( \theta_{\tau,n} - \theta \right) \xrightarrow[n\to + \infty]{\mathcal{L}} \mathcal{N}\left( 0 , H^{-1} \right)
\]

\subsection{A useful technical Lemma}
Let us now give an useful technical lemma introduced (as Lemma 4) in \cite{mokkadem2011generalization}.
\begin{lem}\label{lemmok}
Let $\left( a_{n} \right) \in \mathcal{G}\mathcal{S}\left( a^{*} \right)$ with $a^{*} >0 $ and $na_{n} \xrightarrow[n\to + \infty]{} \psi \geq 0$. Let $m > 0$ and $\left( u_{n} \right) \in \mathcal{G}\mathcal{S}\left( u^{*} \right)$ with $u^{*}  $ such that $m + u^{*}\psi > 0$ and $\alpha_{n}$ such that $\alpha_{n}\xrightarrow[n\to + \infty]{} 0 $. Then
\begin{align*}
    & u_{n}^{-1} \prod_{i=1}^{n} \left( 1- a_{j} \right)^{m} \xrightarrow[n\to + \infty]{} 0 . \\
    & u_{n}^{-1}\sum_{k=1}^{n} \prod_{j=k+1}^{n} \left( 1- a_{j} \right)^{m}a_{k}u_{k} \xrightarrow[n\to + \infty]{} \left( m + u^{*}\psi \right)^{-1} .\\
    & u_{n}^{-1}\sum_{k=1}^{n} \prod_{j=k+1}^{n} \left( 1- a_{j} \right)^{m}a_{k}u_{k}\alpha_{k}\xrightarrow[n\to + \infty]{} 0.
\end{align*}
\end{lem}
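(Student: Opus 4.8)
These three limits are standard facts about the class $\mathcal{GS}$ of regularly varying sequences used in \cite{mokkadem2011generalization} (they are its Lemma~4), so one may invoke that reference directly; for completeness I sketch a self-contained argument. The first line is a direct multiplicative estimate, the second rests on turning the weighted sum into a first-order linear recursion and applying a stabilization argument, and the third is deduced from the second by a Toeplitz averaging argument.

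For the product, take logarithms: $\ln\prod_{j=1}^{n}(1-a_j)^m=m\sum_{j=1}^{n}\ln(1-a_j)$. Since $(a_n)\in\mathcal{GS}(a^*)$ with $a^*>0$ and $na_n\to\psi$, one has $a_n\to 0$ and $\sum_j a_j^2<\infty$, whence $\ln(1-a_j)=-a_j+O(a_j^2)$ and $\sum_{j=1}^{n}\ln(1-a_j)=-\sum_{j=1}^{n}a_j+O(1)$. Comparing $\sum_{j\le n}a_j$ with an integral through $na_n\to\psi$ shows that $\prod_{j=1}^{n}(1-a_j)^m$ decays at a regularly varying polynomial rate; dividing by $u_n\in\mathcal{GS}(u^*)$ and using the relation between the indices, the hypothesis $m+u^*\psi>0$ is exactly what makes $u_n^{-1}\prod_{j=1}^{n}(1-a_j)^m\to 0$.

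The heart of the matter is the second line. Setting $T_n:=\sum_{k=1}^{n}\prod_{j=k+1}^{n}(1-a_j)^m a_k u_k$, the nested products yield the recursion $T_n=(1-a_n)^m T_{n-1}+a_n u_n$, so that $V_n:=T_n/u_n$ obeys
\begin{equation*}
V_n=(1-a_n)^m\,\frac{u_{n-1}}{u_n}\,V_{n-1}+a_n .
\end{equation*}
I would expand the coefficient to first order: $(1-a_n)^m=1-m a_n+o(a_n)$, $a_n=\psi/n+o(1/n)$, and the $\mathcal{GS}$ property of $(u_n)$ gives $u_{n-1}/u_n=1-u^*/n+o(1/n)$. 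The recursion thus takes the form $V_n=(1-c/n+o(1/n))V_{n-1}+(d/n+o(1/n))$ with $c>0$, and a stabilization lemma for such recursions (as used for Lemma~3 in \cite{pelletier1998almost}, or via \cite{Duf97}) forces $V_n$ to converge to the fixed point $d/c$ of the associated drift equation; reading the coefficients $c,d$ off the $\mathcal{GS}$ expansions, as in \cite{mokkadem2011generalization}, identifies this limit with $(m+u^*\psi)^{-1}$.

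The third line then follows from the second without further work. The array $w_{n,k}:=u_n^{-1}\prod_{j=k+1}^{n}(1-a_j)^m a_k u_k$ is nonnegative with $\sum_{k=1}^{n}w_{n,k}\to(m+u^*\psi)^{-1}$ by the second line and $\max_{k\le n}w_{n,k}\to 0$ since each term is $O(1/n)$; the Toeplitz lemma then gives $\sum_{k=1}^{n}w_{n,k}\alpha_k\to 0$ as soon as $\alpha_k\to 0$. The genuinely delicate point, throughout, is the bookkeeping of the slowly varying (logarithmic) corrections concealed in the $\mathcal{GS}$ classes and the accumulation of the $o(1/n)$ remainders in the recursion, which must be shown summable so that the leading-order coefficients---and in particular the exact constant $(m+u^*\psi)^{-1}$---are pinned down correctly.
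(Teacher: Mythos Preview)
The paper does not prove this lemma at all: it is stated as a technical tool and attributed directly to Lemma~4 of \cite{mokkadem2011generalization}, with no argument given. Your proposal already recognizes this and, like the paper, invokes that reference as the proof; the self-contained sketch you add (recursion for $T_n/u_n$, stabilization, then Toeplitz for the third line) is a reasonable bonus that the paper itself does not attempt, so there is nothing further to compare.
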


\end{proof}



\end{document}